\documentclass[a4paper,12pt]{amsart}
\usepackage[margin=2.5cm]{geometry}
\usepackage[utf8]{inputenc}
  \usepackage[T1]{fontenc}
  \usepackage[charter]{mathdesign}
  \usepackage{microtype}
\usepackage{hyperref}
\hypersetup{
bookmarks=true,
pdfpagemode=UseNone,
pdfstartview=FitH,
pdfdisplaydoctitle=true,
pdftitle={Variation estimates for averages along primes and polynomials},
pdfauthor={Pavel Zorin-Kranich},
pdfsubject={Mathematics Subject Classification (2010): 42A45 (Primary) 37A45, 26A45 (Secondary)},
pdfkeywords={variation norm, ergodic averages, prime numbers},
pdflang=en-US
}

\usepackage[safeinputenc=true,style=alphabetic,firstinits,useprefix=true,maxnames=4,%
doi=false,url=false,isbn=false,eprint=true,backref=false,backend=bibtex%
]{biblatex}
\newbibmacro{string+urldoi}[1]{%
  \iffieldundef{url}{%
    \iffieldundef{doi}{%
        #1%
      }{
        \href{http://dx.doi.org/\thefield{doi}}{#1}%
    }%
  }{
    \href{\thefield{url}}{#1}%
  }%
}
\DeclareFieldFormat[article,misc,incollection,book]{title}{\usebibmacro{string+urldoi}{\mkbibquote{#1}}}

\addbibresource{pzorin-ergodic-MR.bib}

\numberwithin{equation}{section}
\newtheorem{theorem}[equation]{Theorem}
\newtheorem{lemma}[equation]{Lemma}
\newtheorem{proposition}[equation]{Proposition}
\newtheorem{corollary}[equation]{Corollary}
\theoremstyle{definition}
\newtheorem{definition}[equation]{Definition}
\theoremstyle{remark}

\newcommand{\R}{\mathbb{R}}
\newcommand{\gJ}{\mathscr{N}} 
\newcommand{\lJ}{\mathfrak{N}} 
\newcommand{\hV}{\mathscr{\tilde V}} 
\newcommand{\iV}{\mathscr{V}} 
\newcommand{\sV}{\mathrm{s}\mathscr{\tilde V}} 
\DeclareMathOperator{\lcm}{lcm}
\newcommand{\Ks}[1][N]{K_{#1}}
\newcommand{\Kf}[1][N]{\widehat{\Ks[#1]}}
\newcommand{\Ls}[1][N]{L_{#1}}
\newcommand{\LsR}[1][N]{L^{\R}_{#1}}
\newcommand{\Lf}[1][N]{\widehat{\Ls[#1]}}
\newcommand{\LfR}[1][N]{\widehat{\LsR[#1]}}
\newcommand{\cm}[1][t]{\sigma_{#1}} 
\newcommand{\cmf}[1][t]{\widehat{\cm[#1]}} 
\newcommand{\F}{\mathscr{F}}
\renewcommand{\Finv}{\F^{-1}}
\newcommand{\cutoff}[1][]{\eta_{#1}}
\newcommand{\cutofff}[1][]{\widehat{\cutoff[#1]}}
\newcommand{\rats}[1][s]{\mathcal{R}_{#1}} 
\newcommand{\Nker}{\mathfrak{I}} 
\newcommand{\Smax}[1][s]{\mathbf{S}_{#1}}
\newcommand{\C}{\mathbb{C}}
\newcommand{\N}{\mathbb{N}}
\newcommand{\Z}{\mathbb{Z}}
\newcommand{\Q}{\mathbb{Q}}

\newcommand{\dif}{\mathrm{d}}
\DeclareMathOperator{\supp}{supp}
\newcommand{\tot}{\varphi} 
\def\<{\left\langle}
\def\>{\right\rangle}

\begin{document}
\subjclass[2010]{42A45 (Primary) 37A45, 26A45 (Secondary)}
\title{Variation estimates for averages along primes and polynomials}
\author{Pavel Zorin-Kranich}
\address{Institute of Mathematics\\
Hebrew University, Givat Ram\\
Jerusalem, 91904, Israel}
\thanks{Research supported by the ISF grant 1409/11.}
\keywords{variation norm, ergodic averages, prime numbers}
\begin{abstract}
We prove $q$-variation estimates, $q>2$, on $\ell^{p}$ spaces for averages along primes (with $1<p<\infty$) and polynomials (with $| 1/p - 1/2 | < 1/2(d+1)$, where $d$ is the degree of the polynomial).
This improves the pointwise ergodic theorems for these averages in the corresponding ranges of $L^{p}$ spaces.
\end{abstract}
\maketitle

\section{Introduction}
Variation and oscillation estimates for convolution operators associated to polynomials and primes have been pioneered by Bourgain in order to prove the corresponding pointwise ergodic theorems \cite{MR937582,MR950982,MR1019960}.
For the ordinary Ces\`aro averages the full range of expected variation estimates has been obtained by Jones, Kaufman, Rosenblatt, and Wierdl \cite{MR1645330} and for averages along scalar polynomials variation estimates on $\ell^{2}(\Z)$ have been obtained by Krause \cite{2014arXiv1402.1803K}.
Here we prove the corresponding estimates for averages along primes on $\ell^{p}(\Z)$ and along vector-valued polynomials on $\ell^{p}(\Z^{d})$ for $p$ in certain open ranges.
See \textsection\ref{sec:var} for the definition of the variation norms $\iV^{q}$ and the relation $\lesssim$.
\begin{theorem}
\label{thm:var-primes}
Let
\begin{equation}
\label{eq:KN:primes}
\Ks = \frac1N \sum_{n\leq N} \Lambda(n) \delta_{n},
\end{equation}
where $\Lambda$ denotes the von Mangoldt function.
Then for any $1 < p < \infty$ and $q>2$ we have
\[
\| \| \Ks * f \|_{\iV^{q}_{N\in\N}} \|_{\ell^{p}(\Z)} \lesssim_{p,q} \|f\|_{\ell^{p}(\Z)}.
\]
\end{theorem}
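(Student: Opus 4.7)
The plan is to combine the Hardy--Littlewood--Vinogradov circle method with the variation-norm machinery developed for Cesàro averages, reducing the theorem to a continuous variation estimate twisted by arithmetic multipliers that are handled by an Ionescu--Wainger type theorem.

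First, I would decompose $\Kf$ on the torus by fixing a smooth cutoff $\cutoff$ supported near the origin and writing
\[
\Kf(\xi) \approx \sum_{s\geq 0} \sum_{a/q\in\rats} \frac{\mu(q)}{\tot(q)}\, \Lf(\xi-a/q)\, \cutoff\bigl(2^{Cs}(\xi-a/q)\bigr),
\]
where $\rats$ denotes reduced fractions with denominators in a dyadic window around $2^{s}$ and $\Ls$ is (the transference of) the continuous Cesàro kernel. The error between $\Kf$ and this model is controlled in $\ell^{2}$ by the Vinogradov exponential sum estimate with a gain of $(\log N)^{-A}$; combined with the trivial $\ell^{1}$ and $\ell^{\infty}$ bounds and interpolation, this yields an $\ell^{p}$ estimate with a gain summable over $N\in\N$, hence harmless for the variation norm $\iV^{q}$ with $q>2$.

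For each fixed $s$ the model above is a product of an arithmetic multiplier supported on $\rats$ and a frequency-localised continuous multiplier. I would apply the Ionescu--Wainger theorem to transfer $\ell^{p}$ boundedness of the continuous variation operator to $\ell^{p}$ boundedness of the arithmetic counterpart, at the cost of a factor polynomial in $s$. The continuous variation estimate
\[
\bigl\|\|\LsR*f\|_{\iV^{q}_{N\in\N}}\bigr\|_{L^{p}(\R)}\lesssim \|f\|_{L^{p}(\R)}
\]
is classical (e.g.\ Jones--Kaufman--Rosenblatt--Wierdl). A geometric gain in $s$, needed to absorb the Ionescu--Wainger loss, is produced by narrowing $\cutoff$ at the high-frequency edge of its support. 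To upgrade from maximal to variation bounds I would split
\[
\|\Ks*f\|_{\iV^{q}_{N\in\N}} \lesssim \|\Ks*f\|_{\iV^{q}_{N\in 2^{\N}}} + \Bigl(\sum_{k}\|\Ks*f\|^{2}_{\iV^{2}_{2^{k}\leq N<2^{k+1}}}\Bigr)^{1/2},
\]
handling long variations via a Rademacher--Menshov / jump argument and short variations via a direct $\iV^{2}$ bound after dyadic frequency localisation inside each block.

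The principal obstacle is the passage from $\ell^{2}$ to $\ell^{p}$ for $p$ far from $2$: the $\ell^{2}$ case permits summation in $s$ by Plancherel, whereas for general $p$ one must invoke the full Ionescu--Wainger multiplier theorem, and the resulting polynomial loss in $s$ must be absorbed by a small gain produced jointly by the minor arc estimate and by the narrowed cutoff. This balance also constrains the interplay between the Fourier decomposition and the long/short variation splitting, and is the step where the restriction to open ranges of $p$ and $q$ originates.
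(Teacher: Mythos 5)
Your outline follows the same broad circle-method architecture as the paper, but several of the specific mechanisms you propose either do not deliver the needed gain or sweep the hardest part of the argument into an under-specified step.

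First, the short variations. You propose a dyadic long/short split with $N\in 2^{\N}$ and a ``direct $\iV^{2}$ bound after dyadic frequency localisation inside each block.'' For the von Mangoldt--weighted averages this is exactly where the difficulty lies: within a dyadic block $[2^{k},2^{k+1}]$ the $\ell^{1}$ increments $\|\Ks[N+1]-\Ks\|_{\ell^{1}}\approx (\log N)/N$ accumulate to a total of size $\log(2^{k})$, so naive $\ell^{1}$ summation fails and you would indeed need a genuine $\iV^{2}$ square-function argument. The paper sidesteps this entirely by taking the ``coarse'' sequence $Z_{\epsilon}=\{\lfloor 2^{k^{\epsilon}}\rfloor\}$, which is denser than dyadic; then the short variations become an elementary $\ell^{1}$ bound (Lemma~\ref{lem:short-var-lp}), while the long variations are absorbed by the multi-frequency estimates since the approximation error $\|\Kf-\Lf\|_{\infty}\lesssim(\log N)^{-B}$ is summable over $Z_{\epsilon}$ for $B$ large. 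Your sketch does not produce a working treatment of the short variations; the dyadic approach with frequency localisation is not impossible in principle, but it is substantially more work and is precisely what the paper's $Z_{\epsilon}$ device is designed to avoid.

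Second, the source of the geometric gain in $s$ is misidentified. You say it is ``produced by narrowing $\cutoff$ at the high-frequency edge.'' That is not where the gain lives: shrinking the cutoff support only serves to keep the bumps around different rationals disjoint. The actual gain comes from the arithmetic coefficients $S(\theta)=\mu(q)/\tot(q)$, with $\Smax\lesssim 2^{-s(1-\delta)}$ by the totient lower bound, and---near $p=2$---from the logarithmic dependence on the number of frequencies in the multi-frequency variation estimate (Proposition~\ref{prop:4.13}, which the paper proves via the Nazarov--Oberlin--Thiele exponential-sum variation lemma and Bourgain's averaging trick). Your sketch does not explain where the needed multi-frequency variational input comes from; the single-frequency JKRW bound you cite must be promoted to a multi-frequency version before any sampling or transference step, and that promotion is the central analytic content of the paper.

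Finally, Ionescu--Wainger vs.\ Magyar--Stein--Wainger plus M\"obius inversion is a genuine alternative: the paper takes the latter route (Theorem~\ref{thm:RtoZ}, then Lemma~\ref{lem:var-all-q} with all residues, then M\"obius inversion costing $q^{\epsilon}$ in Corollary~\ref{cor:var-A-q}, then summing over the dyadic block of $q$'s and interpolating with the $L^{2}$ multi-frequency bound). An Ionescu--Wainger approach could plausibly replace this, but one must apply it to a $B$-valued multiplier with $B=(\C^{T},\iV^{q})$, which is not the form usually stated, and one must still handle the coefficients $\mu(q)/\tot(q)$ either by factoring them out (as the paper does) or by incorporating them into the sampled multiplier. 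As written, your sketch leaves both of these as unjustified assertions.
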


\begin{theorem}
\label{thm:var-poly}
Let $d\geq 1$ and
\begin{equation}
\label{eq:KN:poly}
\Ks = \frac1N \sum_{n\leq N} \delta_{(n^{1},n^{2},\dots,n^{d})}.
\end{equation}
Then for any $p$ with $\big| \frac1p - \frac12 \big| < \frac{1}{2(d+1)}$ and any $q>2$ we have
\[
\| \| \Ks * f \|_{\iV^{q}_{N\in\N}} \|_{\ell^{p}(\Z^{d})} \lesssim_{p,q} \|f\|_{\ell^{p}(\Z^{d})}.
\]
\end{theorem}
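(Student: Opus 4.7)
The plan is to combine the Hardy--Littlewood circle method with variation-norm techniques applied to a dyadic decomposition of the scale parameter $N$. First I would separate the variation into its long piece (variation along the lacunary subsequence $N_{k}=2^{k}$) and its short piece (variation within each dyadic block $[2^{k},2^{k+1}]$). The short variation is handled by a Rademacher--Menshov-type inequality together with an $\ell^{p}$ bound for a square function measuring the differences $\Ks * f - \Ks[2^{k}] * f$ across neighboring scales, exploiting the elementary smoothness of $\Ks$ in $N$. This reduces matters to the long variation along $N = 2^{k}$.

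Next I would analyze the Fourier multiplier $\Kf$, a normalized Weyl exponential sum in $\xi \in (\R/\Z)^{d}$. Following the circle method, split $\Kf = m_{N}^{\mathrm{maj}} + m_{N}^{\mathrm{min}}$, with $m_{N}^{\mathrm{maj}}$ supported on narrow neighborhoods of rationals $a/q$ of denominator $q \leq (\log N)^{C}$. On the minor arcs, Weyl's inequality supplies $\|m_{N}^{\mathrm{min}}\|_{\infty} \lesssim N^{-\delta}$ for some $\delta = \delta(d) > 0$. Interpolating this $\ell^{2} \to \ell^{2}$ decay against the trivial $\ell^{1}$ and $\ell^{\infty}$ bounds and summing over dyadic scales disposes of the minor-arc contribution to the long variation on $\ell^{p}$; it is precisely this interpolation that dictates the range $\bigl|\tfrac{1}{p} - \tfrac{1}{2}\bigr| < \tfrac{1}{2(d+1)}$.

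On the major arcs I would approximate
\[
m_{N}^{\mathrm{maj}}(\xi) \approx \sum_{q \leq (\log N)^{C}} \sum_{\substack{a\in\Z^{d}\\(a,q)=1}} G(a/q)\,\psi(\xi - a/q)\,\Lf(\xi - a/q),
\]
where $G(a/q)$ is a Gauss sum, $\Lf$ is the Fourier transform of a continuous analogue $\Ls$ of $\Ks$, and $\psi$ is a smooth bump localizing near $0$. Variation estimates for $\Ls$ on $L^{p}(\R^{d})$ follow from classical real-variable methods --- L\'epingle's inequality for the smooth low-frequency piece and a $TT^{*}$/derivative argument for the oscillatory correction --- and transfer to $\ell^{p}(\Z^{d})$ by sampling. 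The arithmetic multiplier $\sum_{a/q} G(a/q)\, e^{2\pi i (a/q)\cdot n}$ is controlled on $\ell^{p}(\Z^{d})$ by the Ionescu--Wainger multiplier theorem, whose hypotheses are secured by standard Gauss-sum decay.

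The hard part will be marrying the Ionescu--Wainger arithmetic machinery, which is tailored for individual multipliers, with the variation norm, which is both nonlinear and intrinsically multiscale. Concretely, one needs a joint bound for the family $\{m_{2^{k}}^{\mathrm{maj}}\}_{k}$ that distributes the $\iV^{q}$ norm simultaneously over the rationals $a/q$ and over the dyadic scales $k$, while preserving the $\ell^{p}$ norm inside the restricted range. This will demand a vector-valued/variation-norm strengthening of the multiplier theorem together with careful bookkeeping of the implicit constants so that the final exponent lies inside the open interval forced by the minor-arc Weyl estimate.
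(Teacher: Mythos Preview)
Your overall architecture (long/short split, circle method decomposition of $\Kf$, approximation on the major arcs by Gauss sums times a continuous convolution) is correct and matches the paper's. But you have misidentified where the range restriction $\big|\tfrac1p-\tfrac12\big|<\tfrac{1}{2(d+1)}$ comes from, and this is a genuine gap rather than a cosmetic one.

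The minor-arc piece cannot be the culprit. Weyl/Vinogradov gives $\|m_N^{\mathrm{min}}\|_\infty\lesssim N^{-\delta}$ for some $\delta=\delta(d)>0$; interpolating the resulting $\ell^2$ bound against the trivial $\ell^1$ and $\ell^\infty$ bounds yields $\|m_N^{\mathrm{min}}\|_{\ell^p\to\ell^p}\lesssim N^{-\delta\theta}$ with $\theta>0$ for every $1<p<\infty$, and $\sum_k 2^{-k\delta\theta}<\infty$ regardless of $p$. (Incidentally, for polynomials the major-arc cutoff should be a small power of $N$, not $(\log N)^C$; logarithmic cutoffs are a Siegel--Walfisz artifact specific to primes.)

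The bottleneck is the major-arc variation estimate, and specifically the sum over rational heights. In the paper one writes $\Lf=\sum_{s\geq 0}\Lf[s,N]$, where $\Lf[s,N]$ collects the rationals $\vec\theta$ with least common denominator $\asymp 2^s$ weighted by the complete sum $S(\vec\theta)=\Kf[q](\vec\theta)$. Hua's bound gives $|S(\vec\theta)|\lesssim 2^{-s(1/d-\epsilon)}$, so at $p=2$ the multi-frequency variation inequality yields $2^{-s(1/d-\epsilon)}$ for the $s$-th piece. Away from $p=2$ one does \emph{not} appeal to Ionescu--Wainger; instead one uses Magyar--Stein--Wainger transfer together with the explicit computation that the inverse Fourier transform of $\sum_{a\bmod q} S(a/q)\cutofff(\cdot-a/q)$ is $\cutoff(x)\,q^{d-1}\,1_{x_j\equiv x_1^{j}\bmod q}$, which is uniformly in $\ell^1$. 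After M\"obius inversion over divisors this gives an $\ell^p$ bound of $2^{s(1+\epsilon)}$ for each $1<p<\infty$. Interpolating $2^{-s/d}$ at $p=2$ against $2^{s}$ at the endpoints produces an exponent that is negative precisely when $\big|\tfrac1p-\tfrac12\big|<\tfrac{1}{2(d+1)}$. That is the source of the range.

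Your last paragraph correctly flags that a vector-valued multi-frequency theorem is the crux; but the specific exponent $\tfrac{1}{2(d+1)}$ is not delivered by any general such theorem---it comes from the arithmetic $1/d$ in Hua's estimate balanced against a crude $2^{s}$ endpoint. If you route the argument through Ionescu--Wainger you will either recover a different (possibly larger) range or will have to redo this balancing; either way you should relocate the restriction to the major-arc side.
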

Theorem~\ref{thm:var-poly} is not likely to be optimal as far as the restriction on $p$ is concerned, and in fact we believe that it should extend to $1<p<\infty$.
Interpolation shows that the set of pairs $(1/p,1/q)$ for which this result holds is convex, see e.g.\ \cite[\textsection 7]{2014arXiv1402.1803K}.
In view of the maximal inequality (see \cite[Theorem D]{2014arXiv1405.5566M} or \cite[(7.1)]{MR1019960} for the linearly dependent case), corresponding to $1<p<\infty$ and $q=\infty$, this yields partial results (with a smaller range of $q$'s) towards extending the range of allowed $p$'s.

The proofs of Theorems~\ref{thm:var-primes} and \ref{thm:var-poly} follow the lines of Bourgain's article \cite{MR1019960} but use the more recent variational estimates for convolutions \cite{MR2434308} and trigonometric polynomials \cite{MR2653686} in order to obtain an appropriate multi-frequency variational inequality on $L^{2}(\R^{d})$.
Multi-frequency $L^{p}$ estimates are obtained in two different ways.
The first way consists in interpolation between multi-frequency $L^{2}$ estimates and single-frequency $L^{p}$ estimates.
This approach does not rely on algebraic relations between the distinguished frequencies, but does not yield optimal estimates in our cases.
The second way is more specific to our algebraic setting and goes back at least to Wierdl \cite{MR995574}, although the lack of an easy endpoint at $p=\infty$ (as for the maximal inequality) necessitates the use of a more recent transfer technique from \cite{MR1888798}.
The multi-frequency variational estimates are applied to certain Fourier multipliers that approximate $\Kf$ in a sufficiently strong sense.
The construction of these multipliers is due to Bourgain \cites{MR937581,MR950982}.
We include concise proofs of their properties established in \cite{MR950982} and \cite{MR995574}.
A technical novelty of our argument is that the multi-frequency estimates are used to control the variation norm on a sequence of times which is denser than dyadic.
This simplifies the treatment of short variations, see Lemma~\ref{lem:short-var-lp}.

By Calder\'on's transference principle \cite[Theorem 1]{MR0227354} Theorems \ref{thm:var-primes} and \ref{thm:var-poly} imply the following analogous statements for measure-preserving $\Z$-actions on $\sigma$-finite measure spaces.
\begin{corollary}
Let $(X,\mu,T)$ be a ($\sigma$-finite, invertible) measure-preserving system.
Then for every $1<p<\infty$ and $q>2$ we have
\[
\| \| \frac1N \sum_{n=1}^{N} \Lambda(n) f(T^{n}x) \|_{\iV^{q}_{N\in\N}} \|_{L^{p}_{x}} \lesssim_{p,q} \| f \|_{L^{p}}
\quad\text{for every } f\in L^{p}(X,\mu).
\]
\end{corollary}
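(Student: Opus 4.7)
The plan is to deduce the corollary from Theorem~\ref{thm:var-primes} by the standard Calder\'on transference argument, adapted to the vector-valued setting of variation norms. For each $x\in X$, set $g_{x}(n) := f(T^{n} x)$ on $\Z$ and $\widetilde{\Ks}(n) := \Ks(-n)$, so that for every $j\in\Z$,
$$
\frac{1}{N}\sum_{n\leq N}\Lambda(n) f\bigl(T^{n}(T^{j} x)\bigr) = (\widetilde{\Ks} * g_{x})(j).
$$
Theorem~\ref{thm:var-primes} applies to the reflected kernels $\widetilde{\Ks}$ as well (variation estimates are invariant under $n\mapsto -n$), and for every truncation $N_{0}$ it yields
$$
\bigl\|\|(\widetilde{\Ks} * g_{x})(\cdot)\|_{\iV^{q}_{N\leq N_{0}}}\bigr\|_{\ell^{p}(\Z)} \lesssim \|g_{x}\|_{\ell^{p}(\Z)},
$$
whenever the right-hand side is finite.

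To perform the transfer, I would fix a large window parameter $M$ and apply the inequality above to the truncation $g_{x}\cdot \mathbf{1}_{[-M-N_{0},\, M+N_{0}]}$ restricted to $|j|\leq M$, obtaining
$$
\sum_{|j|\leq M}\bigl\|(\widetilde{\Ks} * g_{x})(j)\bigr\|_{\iV^{q}_{N\leq N_{0}}}^{p} \lesssim \sum_{|n|\leq M+N_{0}}|f(T^{n} x)|^{p}.
$$
Integrating in $x\in X$, using the $T$-invariance of $\mu$ to write $\int|f(T^{n} x)|^{p}\dif\mu = \|f\|_{L^{p}}^{p}$, dividing by $2M+1$ and letting $M\to\infty$, gives the desired estimate with the truncated variation norm $\iV^{q}_{N\leq N_{0}}$ in place of $\iV^{q}_{N\in\N}$. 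Finally, monotone convergence as $N_{0}\to\infty$ promotes this to the full variation.

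No substantial obstacle is expected, since Calder\'on's transference is by now routine in this context. The only points worth a moment's thought are (i) that $\sigma$-finiteness causes no difficulty because $\mu$ enters the argument only through its $T$-invariance together with the finiteness of $\|f\|_{L^{p}}$, and (ii) that the classical scalar transference argument applies without change to the operator $h\mapsto \|(\widetilde{\Ks} * h)(\cdot)\|_{\iV^{q}_{N\leq N_{0}}}$, which is a nonnegative sublinear, translation-commuting map from $\ell^{p}(\Z)$ to $\ell^{p}(\Z)$; no separate vector-valued transference result is needed.
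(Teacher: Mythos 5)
Your proposal is correct and is essentially the same approach as the paper, which simply invokes Calder\'on's transference principle (citing \cite[Theorem~1]{MR0227354}) without spelling out the details. The truncation-and-averaging argument you describe, with the observation that scalar transference applies verbatim to the sublinear, translation-commuting operator $h\mapsto\|(\widetilde{\Ks}*h)(\cdot)\|_{\iV^{q}_{N\leq N_{0}}}$, is precisely the content of Calder\'on's theorem.
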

\begin{corollary}
\label{cor:var-poly}
Let $(X,\mu)$ be a $\sigma$-finite measure space and $T_{1},\dots,T_{d}:X\to X$ be commuting invertible measure-preserving transformations.
Then for every $p$ with $\big| \frac1p - \frac12 \big| < \frac{1}{2(d+1)}$ and every $q>2$ we have
\[
\| \| \frac1N \sum_{n=1}^{N} f(T_{1}^{n^{1}}\cdots T_{d}^{n^{d}} x) \|_{\iV^{q}_{N\in\N}} \|_{L^{p}_{x}} \lesssim_{p,q} \| f \|_{L^{p}(X)}
\quad\text{for every } f\in L^{p}(X,\mu).
\]
\end{corollary}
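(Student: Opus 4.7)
The plan is to deduce Corollary~\ref{cor:var-poly} from Theorem~\ref{thm:var-poly} by Calder\'on's transference principle, exactly as announced in the excerpt. Given $f\in L^p(X,\mu)$ and $x\in X$, I would introduce the orbit function $F_x:\Z^d\to\C$ defined by $F_x(m)=f(T_1^{-m_1}\cdots T_d^{-m_d}x)$, so that a direct computation yields
$$\frac{1}{N}\sum_{n=1}^N f(T_1^{n}\cdots T_d^{n^d}x)=(\Ks\ast F_x)(0)$$
with $\Ks$ as in \eqref{eq:KN:poly}. The corollary then asks for control of the $L^p_x$-norm of $\|(\Ks\ast F_x)(0)\|_{\iV^q_{N\in\N}}$ by $\|f\|_{L^p}$.

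For the transfer step, I would fix $M\in\N$ large and restrict attention to the finite set of scales $N\leq M^{1/d}/100$. For every $m$ in the box $Q_{M/2}:=\{m\in\Z^d:|m_i|\leq M/2\}$, the value $(\Ks\ast F_x)(m)$ depends only on the restriction of $F_x$ to $Q_M$; applying Theorem~\ref{thm:var-poly} to the truncation $\mathbf{1}_{Q_M}F_x\in\ell^p(\Z^d)$ (the truncation is needed because $F_x$ itself need not lie in $\ell^p$) gives the pointwise-in-$x$ bound
$$\sum_{m\in Q_{M/2}}\bigl\|(\Ks\ast F_x)(m)\bigr\|_{\iV^q_N}^{p}\lesssim_{p,q}\sum_{m\in Q_M}|F_x(m)|^p.$$
Integrating in $\mu$ and invoking Fubini together with the $T_i$-invariance: the right-hand side becomes $|Q_M|\cdot\|f\|_{L^p}^{p}$, while for each fixed $m$ the substitution $\tilde{x}:=T_1^{-m_1}\cdots T_d^{-m_d}x$ identifies $(\Ks\ast F_x)(m)$ with $(\Ks\ast F_{\tilde{x}})(0)$, so the left-hand side equals $|Q_{M/2}|$ times the quantity to be bounded. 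Dividing and sending $M\to\infty$ absorbs the harmless factor $|Q_M|/|Q_{M/2}|\to 2^d$; monotone convergence, applied after writing $\iV^q_{N\in\N}$ as a supremum over finite increasing subsequences of $\N$, removes the initial restriction on the range of scales.

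The $\sigma$-finiteness of $\mu$ presents no genuine obstacle: one either reduces to finite measure via exhaustion by $T_i$-invariant sets, or keeps the argument pointwise in $x$ throughout and integrates at the very end. There is no analytic step beyond Theorem~\ref{thm:var-poly} itself; Calder\'on's principle is tailored exactly to deductions of this form, and the commutativity of the $T_i$ reduces the matter entirely to $\Z^d$-translation invariance on $\ell^p(\Z^d)$. The hardest point is mere bookkeeping of the sign conventions and of the scale-versus-truncation relation $N\lesssim M^{1/d}$, neither of which is a real difficulty.
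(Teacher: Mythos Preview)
Your proposal is correct and is exactly the approach the paper takes: the paper does not give a separate proof of Corollary~\ref{cor:var-poly} but simply states that it follows from Theorem~\ref{thm:var-poly} by Calder\'on's transference principle \cite[Theorem~1]{MR0227354}. What you have written is a standard and accurate unfolding of that principle in this setting, with the correct observation that the support of $\Ks$ lies in a box of side $\lesssim N^{d}$, forcing the truncation condition $N\lesssim M^{1/d}$.
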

Corollary~\ref{cor:var-poly} applies in particular in the case $X=\Z$, $T_{j}=T^{a_{j}}$, $T:\Z\to\Z$ the shift.
We find it convenient to exclude this non-homogeneous situation from Theorem~\ref{thm:var-poly} because the homogeneous setup offers a more direct link to the dilation-invariant results from \cite{MR2434308}.

I thank Mariusz Mirek for pointing out an error in an earlier revision of this text.

\section{Variation of exponential sums}
A pointwise variational estimate for an exponential sum with fixed, separated frequencies and varying coefficients was a central technical innovation in the article of Nazarov, Oberlin, and Thiele \cite[Lemma 3.2]{MR2653686}.
We will need a version of this result for exponential sums on $\R^{d}$, $d\geq 1$, with explicit dependence of the constants on all parameters.
The higher-dimensional version follows from a more abstract formulation due to Oberlin \cite[Proposition 9.3]{MR3090139}, explicit constants have been given by Krause \cite[Lemma 2.4]{2014arXiv1402.1804K}, and it is clear how these extensions should be combined.
We include a detailed proof because our construction of the ``parent'' function $\rho$ is slightly simpler than that used in the articles cited above.
We begin with a short summary of the relevant definitions.
\label{sec:var}
\begin{definition}
Let $\Nker$ be a totally ordered set and $(c_{t})_{t\in\Nker}$ be an $\Nker$-sequence in a normed space.
We denote
\begin{enumerate}
\item by $\gJ_{\lambda}(c)$, $\lambda>0$, the \emph{greedy jump counting function}, that is, the supremum over the lengths $J$ of sequences $t_{0} < t_{1} < \dots < t_{J}$ such that $|c_{t_{j}}-c_{t_{j-1}}|>\lambda$ for all $j=1,\dots,J$,
\item by $\lJ_{\lambda}(c)$, $\lambda>0$, the \emph{lazy jump counting function}, that is, the supremum over the lengths $J$ of sequences $s_{1}<t_{1} \leq s_{2}<t_{2} \leq \dots \leq s_{J}<t_{J}$ such that $|c_{t_{j}}-c_{s_{j}}|>\lambda$ for all $j=1,\dots,J$,
\item by $\hV^{q}(c)=\|c_{t}\|_{\hV^{q}_{t}}$, $q>0$, the \emph{homogeneous $q$-variation norm}, that is, the supremum of
\[
\| c_{t_{j+1}}-c_{t_{j}} \|_{\ell^{q}_{j}}
\]
over all strictly increasing sequences $t_{1}<\dots<t_{J}$, and
\item the \emph{inhomogeneous $q$-variation norm} by
\[
\iV^{q}(c)=\|c_{t}\|_{\iV^{q}_{t}} = ((\hV^{q})^{q}+(\sup_{t} |c_{t}|)^{q})^{1/q}.
\]
\end{enumerate}
\end{definition}
We will sometimes write $\gJ_{\lambda,t}$, $\iV^{q}_{t}$, $\iV^{q}_{t\in\Nker}$, etc., in order to emphasize the relevant variable and $\hV^{q}(X)$ in order to emphasize the normed space in which the sequence $(c_{t})$ takes values.
It is clear that both $\gJ_{\lambda}$ and $\lJ_{\lambda}$ are monotonically decreasing in $\lambda$ and
\[
\gJ_{\lambda} \leq \lJ_{\lambda} \leq \gJ_{\lambda/2}.
\]
Moreover, we can pass between variation and jump estimates using the identities
\begin{equation}
\label{eq:jump<var}
\lambda \gJ_{\lambda}^{1/q} \leq \lambda \lJ_{\lambda}^{1/q} \leq \hV^{q}
\end{equation}
and
\begin{equation}
\label{eq:var<jump}
\hV^{q} \leq (\sum_{k\in\Z} (2^{k+1})^{q} \lJ_{2^{k}})^{1/q} \leq 4 (\sum_{k\in\Z} (2^{k})^{q} \gJ_{2^{k}})^{1/q}.
\end{equation}
Note that the inhomogeneous variation norm is controlled by the homogeneous variation norm and the value of the sequence at any given point $t$.
Estimates at a fixed $t$ will be easy in many of our variation inequalities, allowing us to concentrate on the homogeneous variation norm.

A recurring theme will be splitting the variation into a ``long'' and a ``short'' part with respect to an increasing, cofinal, and coinitial sequence $Z=\{\dots,N_{1},N_{2},\dots\}$ in $\Nker$.
The \emph{long variation} of a sequence $(c_{t})$ with respect to $Z$ is simply $\| c_{t} \|_{\hV^{q}_{t\in Z}}$.
The \emph{short variation} with respect to $Z$ is defined by
\[
\| c_{t} \|_{\sV^{q}} = \Big( \sum_{j} \|c_{t}\|_{\hV^{q}_{t\in [N_{j},N_{j+1}]}}^{q} \Big)^{1/q}.
\]
It is well-known that the full homogeneous variation is controlled by the long and the short variation, namely
\begin{equation}
\label{eq:var-long-short}
\| c_{t} \|_{\hV^{q}_{t}} \leq \| c_{t} \|_{\hV^{q}_{t\in Z}} + 2\| c_{t} \|_{\sV^{q}}.
\end{equation}
To see this, consider any sequence $t_{1}<\dots<t_{J}$ as in the definition of the homogeneous variation norm.
For every $j$ we have $t_{j}\in [N_{j_{-}},N_{j_{+}}]$ with $j_{+}=j_{-}+1$.
If $t_{j}\leq N_{j_{+}} \leq N_{(j+1)_{-}} \leq t_{j+1}$, then we split the corresponding difference $c_{t_{j}}-c_{t_{j+1}}$ accordingly, otherwise we have $t_{j+1}\in [N_{j_{-}},N_{j_{+}}]$.
Thus the sequence
\[
(c_{t_{j}}-c_{t_{j+1}})_{j}
\]
can be written as the sum of three sequences, one of which corresponds to differences between $N_{j}$'s and the others to differences within intervals $[N_{j},N_{j+1}]$.
Taking the supremum over all increasing sequences of $t_{j}$'s we obtain the claim.

\begin{lemma}
\label{lem:alm-orth}
Let $I=I_{1}\times\dots\times I_{d}\subset \R^{d}$ be a product of intervals and let $(\xi_{\vec k})_{\vec k \in \Z^{d}} \subset \R^{d}$ be frequencies such that $(\xi_{\vec k,i}-\xi_{\vec l,i}) \gtrsim |k_{i}-l_{i}|/|I_{i}|$.
Then we have
\[
\| \sum_{k\in\Z^{d}} c_{k} e(\xi_{k}\cdot y) \|_{L^{2}_{y}(I)}
\lesssim |I|^{1/2} \|c_{k}\|_{\ell^{2}_{k}},
\]
where the implied constant depends only on the implied constant in the hypothesis and the dimension $d$.
\end{lemma}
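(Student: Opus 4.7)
The strategy is a standard almost-orthogonality argument based on dominating the characteristic function $1_I$ pointwise by a smooth function whose Fourier transform has small support; the separation hypothesis then forces only the diagonal contribution to survive.

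\textbf{Step 1 (setup).} After translating, assume $I_i = [0,|I_i|]$. I would first construct a nonnegative Schwartz function $\phi\colon\R\to\R$ with $\phi \geq 1$ on $[0,1]$ and with $\widehat{\phi}$ supported in $[-c,c]$, where $c>0$ is a small constant chosen below. A convenient choice is a suitable rescaling of the Fejér kernel: for small $\epsilon>0$, let $\phi(x)= C(\sin(\pi\epsilon(x-1/2))/(\pi\epsilon(x-1/2)))^{2}$, with $\epsilon$ small enough that $\phi\geq 1$ on $[0,1]$ (and then $\widehat{\phi}$ has support of width $2\epsilon$). Setting
\[
\psi(y) = \prod_{i=1}^{d} \phi(y_{i}/|I_{i}|),
\]
we have $\psi\geq 1_{I}$ pointwise and $\widehat{\psi^{2}}(\zeta)=\prod_{i}|I_{i}|\,\widehat{\phi^{2}}(|I_{i}|\zeta_{i})$ is supported in $\prod_{i}[-2c/|I_{i}|,\,2c/|I_{i}|]$.

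\textbf{Step 2 (expansion).} Since $\psi\geq 1_{I}$,
\[
\Big\|\sum_{k}c_{k}e(\xi_{k}\cdot y)\Big\|_{L^{2}(I)}^{2}
\leq \int_{\R^{d}}\Big|\sum_{k}c_{k}e(\xi_{k}\cdot y)\Big|^{2}\psi(y)^{2}\,\dif y
=\sum_{k,l}c_{k}\overline{c_{l}}\,\widehat{\psi^{2}}(\xi_{l}-\xi_{k}).
\]

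\textbf{Step 3 (separation kills the off-diagonal).} Let $\alpha>0$ be the implied constant in the hypothesis, so $|\xi_{k,i}-\xi_{l,i}|\geq\alpha|k_{i}-l_{i}|/|I_{i}|$. Choose $c<\alpha/2$. If $k\neq l$, there is an index $i$ with $|k_{i}-l_{i}|\geq 1$, and then
\[
|I_{i}||\xi_{l,i}-\xi_{k,i}|\geq\alpha|k_{i}-l_{i}|\geq\alpha>2c,
\]
so the $i$-th factor of $\widehat{\psi^{2}}(\xi_{l}-\xi_{k})$ vanishes. Hence only the diagonal $k=l$ contributes, yielding
\[
\Big\|\sum_{k}c_{k}e(\xi_{k}\cdot y)\Big\|_{L^{2}(I)}^{2}\leq \widehat{\psi^{2}}(0)\sum_{k}|c_{k}|^{2}=|I|\,\|\phi\|_{L^{2}(\R)}^{2d}\,\|c_{k}\|_{\ell^{2}_{k}}^{2},
\]
which is the claimed inequality with an implied constant depending only on $\alpha$ and $d$ (through the choice of $\phi$).

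\textbf{Main obstacle.} There is no deep obstacle; the only nontrivial ingredient is the existence of a nonnegative majorant $\phi$ of $1_{[0,1]}$ with compactly supported Fourier transform, which is handled by the Fejér-type construction above. The proof generalizes verbatim to the vector-valued setting provided the frequencies satisfy the coordinatewise separation assumed in the statement, which is exactly what lets the product structure of $\widehat{\psi^{2}}$ do the work.
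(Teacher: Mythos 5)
Your proof is correct and takes a genuinely different route from the paper's. The paper uses a smooth cutoff $w$ equal to $1$ on $I$ and supported on $2I$, then integrates by parts twice in each coordinate where $k_i\neq l_i$; the off-diagonal terms then decay like $\prod_i (k_i-l_i)^{-2}$, and Schur's test (the symmetrization in $k,l$) closes the sum. You instead use a Fej\'er-type majorant $\psi\geq 1_I$ whose square has band-limited Fourier transform; the coordinatewise separation hypothesis, combined with a suitably small support parameter $c$, forces every off-diagonal term $\widehat{\psi^{2}}(\xi_l-\xi_k)$ to vanish outright, so only the diagonal survives. This is a cleaner, large-sieve-style argument: it trades the integration-by-parts bookkeeping and the convergent-sum estimate for the one-time construction of the majorant, and it makes the dependence of the constant on $\alpha$ (through the choice of $\epsilon$) completely transparent. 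One small wording slip: you call $\phi$ a Schwartz function, but a function with nonzero compactly supported Fourier transform cannot be Schwartz (Paley--Wiener); your $\phi\sim\text{sinc}^{2}$ decays only polynomially. This is immaterial to the argument, since all you need is that $\phi^{2}$ is integrable and $\widehat{\phi^{2}}$ is compactly supported, both of which hold.
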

Here and later $C_{a}$ denotes an unspecified positive constant, depending on auxiliary parameter(s) $a$, whose value may vary from line to line.
We say that $A$ is dominated by $B$, in symbols $A\lesssim_{a} B$, if $A\leq C_{a}B$.
The parameters $a$ can be partially or fully omitted if they are clear from the context.
\begin{proof}
Let $w_{i}$ be a smooth non-negative functions bounded by $1$ and supported on $2I_{i}$ with $w_{i}|_{I_{i}} \equiv 1$ and $|w_{i}''| |I_{i}|^{2} \lesssim 1$.
Let also $w(y)=w_{1}(y_{1})\cdots w_{d}(y_{d})$.
We use almost-orthogonality of the phases $e(\xi_{k} \cdot y)$ in $L^{2}(w)$.
More precisely, by partial integration we obtain
\begin{multline*}
\| \sum_{k} c_{k} e(\xi_{k}\cdot y) \|_{L^{2}_{y}(I)}^{2}
\leq
\| \sum_{k} c_{k} e(\xi_{k}\cdot y) \|_{L^{2}_{y}(w)}^{2}
\leq
\sum_{k,l\in\Z^{d}} \Big| c_{k}\overline{c_{l}} \int e((\xi_{k}-\xi_{l}) \cdot y) w(y) \dif y \Big|\\
\leq
\sum_{k,l\in\Z^{d}} |c_{k}c_{l}| \Big| \prod_{i:k_{i}\neq l_{i}} ((2\pi i) (\xi_{k,i}-\xi_{l.i}))^{-2} \int e((\xi_{j}-\xi_{k}) \cdot y) (\prod_{i:k_{i}\neq l_{i}} \partial_{i}^{2}) w(y) \dif y \Big|\\
\lesssim
\sum_{k,l\in\Z^{d}} |c_{k}c_{l}| \big(\prod_{i:k_{i}\neq l_{i}} |I_{i}|^{2} (k_{i}-l_{i})^{-2} \big) |I| \big(\prod_{i:k_{i}\neq l_{i}} |I_{i}|^{-2} \big)\\
\leq
|I| \sum_{k,l\in\Z^{d}} (|c_{k}|^{2}+|c_{l}|^{2})/2 \prod_{i:k_{i}\neq l_{i}} (k_{i}-l_{i})^{-2}
\end{multline*}
Since the last expression is symmetric in $k$ and $l$, it is bounded by
\begin{multline*}
|I| \sum_{k,l\in\Z^{d}} |c_{k}|^{2} \prod_{i:k_{i}\neq l_{i}} (k_{i}-l_{i})^{-2}
=
|I| \sum_{k\in\Z^{d}} |c_{k}|^{2} \sum_{l\in\Z^{d}} \prod_{i:k_{i}\neq l_{i}} (k_{i}-l_{i})^{-2}
=
|I| \sum_{k\in\Z^{d}} |c_{k}|^{2} \sum_{l\in\Z^{d}} \prod_{i:l_{i} \neq 0} l_{i}^{-2}.
\end{multline*}
Since the last sum over $l$ is finite, we obtain the claim.
\end{proof}

The next lemma captures the main step in the proof of \cite[Lemma 3.2]{MR2653686}.
In the formulation below the left-hand side is essentially from \cite[Proposition 9.3]{MR3090139} and the right-hand side is essentially from \cite[Lemma 2.4]{2014arXiv1402.1804K}.
\begin{lemma}
\label{lem:NOT:3.2}
Let $B$ be a normed space, $I$ a measure space, and let $g\in L^{r}(I,B')$, $r\geq 1$.
Let also $(c_{t})_{t\in\Nker} \subset B$ with a countable totally ordered set $\Nker$, and $q>r$.
Then
\[
\| \| \< c_{t}, g(y)\> \|_{\hV^{q}_{t}} \|_{L^{r}_{y}(I)}
\lesssim
\int_{0}^{\infty} \min( M \gJ_{\lambda}^{1/r}, \|g\|_{L^{r}(I,B')} \gJ_{\lambda}^{1/q}) \dif\lambda,
\]
where $\gJ$ is the greedy jump counting function for the sequence $(c_{t})$,
\[
M := \sup_{c\in B, \|c\|=1} \| \<c, g(y)\> \|_{L^{r}_{y}(I)},
\]
and the implied constant is absolute.
\end{lemma}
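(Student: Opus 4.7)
I would prove the lemma by establishing two bounds, one involving $\|g\|_{L^{r}(I,B')}$ and one involving $M$, then combining them through a scale decomposition. The \emph{easier} bound, corresponding to the $\|g\|_{L^{r}}\gJ_{\lambda}^{1/q}$ term, follows from the trivial pointwise inequality: since $\hV^{q}$ is a seminorm on sequences and $|\<c,g(y)\>|\leq\|c\|_{B}\|g(y)\|_{B'}$, one has $\| \<c_{t},g(y)\>\|_{\hV^{q}_{t}}\leq\|g(y)\|_{B'}\,\|c_{t}\|_{\hV^{q}_{t}}$; combined with $\|c_{t}\|_{\hV^{q}_{t}}\lesssim\int_{0}^{\infty}\gJ_{\lambda}(c)^{1/q}\,d\lambda$ (from (\ref{eq:var<jump}) and the embedding $\ell^{1}\hookrightarrow\ell^{q}$ for $q\geq 1$) and the $L^{r}_{y}$-integration, this yields
\[
\| \|\<c_{t},g(y)\>\|_{\hV^{q}_{t}} \|_{L^{r}_{y}(I)} \lesssim \|g\|_{L^{r}(I,B')}\int_{0}^{\infty}\gJ_{\lambda}(c)^{1/q}\,d\lambda.
\]

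For the \emph{harder} $M\gJ_{\lambda}^{1/r}$ bound I would fix a crossover scale $\lambda_{0}$ and approximate $(c_{t})$ by a step function. Let $(s_{j})_{j=0}^{J}$ be a maximal greedy sequence for $c$ at scale $\lambda_{0}$, so $J\leq\gJ_{\lambda_{0}}(c)$ and $\|c_{s_{j+1}}-c_{s_{j}}\|_{B}>\lambda_{0}$; set $\tilde c_{t}:=c_{s_{j}}$ for $t\in[s_{j},s_{j+1})$, so $\|c_{t}-\tilde c_{t}\|_{B}\leq\lambda_{0}$. Apply the long-short decomposition (\ref{eq:var-long-short}) relative to $(s_{j})$: the short variations live on intervals $[s_{j},s_{j+1}]$ where $c$ has only jumps of size $\leq\lambda_{0}$, and applying the easier bound there contributes a total of $\|g\|_{L^{r}(I,B')}\int_{0}^{\lambda_{0}}\gJ_{\mu}(c)^{1/q}\,d\mu$. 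The long piece is controlled via $\hV^{q}\leq\hV^{r}$ (using $q\geq r$) together with the defining property $\|\<c,g(\cdot)\>\|_{L^{r}}\leq M\|c\|_{B}$; the target estimate here is
\[
\| \|\<c_{s_{j}},g(y)\>\|_{\hV^{r}_{j}} \|_{L^{r}_{y}} \lesssim M\Big(\sum_{j}\|c_{s_{j+1}}-c_{s_{j}}\|_{B}^{r}\Big)^{1/r} \lesssim M\int_{\lambda_{0}}^{\infty}\gJ_{\lambda}(c)^{1/r}\,d\lambda,
\]
where the last inequality uses that all greedy jumps have size $>\lambda_{0}$ combined with a layer-cake bound. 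Summing the two contributions gives a $\lambda_{0}$-family of estimates; choosing $\lambda_{0}$ equal to the crossover $\lambda^{*}$ where $M\gJ_{\lambda^{*}}(c)^{1/r}=\|g\|_{L^{r}(I,B')}\gJ_{\lambda^{*}}(c)^{1/q}$ turns the sum of the two integrals into exactly the integrated minimum in the statement.

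The main obstacle will be the second displayed inequality: controlling $\| \|\<c_{s_{j}},g(y)\>\|_{\hV^{r}_{j}} \|_{L^{r}_{y}}$ by $M(\sum_{j}\|c_{s_{j+1}}-c_{s_{j}}\|_{B}^{r})^{1/r}$. A naive application of $\hV^{r}\leq\hV^{1}$ only produces the $\ell^{1}$-sum of jumps; improving this to the $\ell^{r}$-sum requires exchanging the supremum over subsequences in the definition of $\hV^{r}$ with the $L^{r}_{y}$-integration. This step---the essential content of the Nazarov--Oberlin--Thiele-style lemma in the normed-space setting---is most naturally carried out by linearising the $\hV^{r}$-supremum via duality and then absorbing it through Fubini with the estimate $\|\<c,g\>\|_{L^{r}}\leq M\|c\|_{B}$.
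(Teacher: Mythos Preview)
Your single-scale approach for the harder bound breaks down at precisely the step you flag as the main obstacle: the intermediate inequality
\[
\| \|\<c_{s_{j}},g(y)\>\|_{\hV^{r}_{j}} \|_{L^{r}_{y}} \lesssim M\Big(\sum_{j}\|c_{s_{j+1}}-c_{s_{j}}\|_{B}^{r}\Big)^{1/r}
\]
is false in general. Take $B=B'=\R$, $I$ a single point of mass $1$, $g\equiv 1$ (so $M=1$), and $c_{s_{j}}=j$ for $0\leq j\leq J$. Then the left-hand side equals $\|j\|_{\hV^{r}_{j}}=J$ (achieved by the two-term subsequence $\{0,J\}$), while the right-hand side is $J^{1/r}$. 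The proposed fix via ``linearising the $\hV^{r}$-supremum via duality and Fubini'' cannot repair this: no rearrangement of the order of integration will turn $J^{1/r}$ into $J$. The underlying reason is that greedy jumps at scale $\lambda_{0}$ are bounded \emph{below} by $\lambda_{0}$ but not above, so knowing their number does not control the $r$-variation of the coarse sequence $(c_{s_{j}})$.

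The paper's proof resolves this by replacing the single greedy decomposition at scale $\lambda_{0}$ with a full dyadic hierarchy of ``parent'' functions $\rho(n,\cdot)$, $n\geq 0$. At level $n$ the increment $c_{\rho(n,t)}-c_{\rho(n+1,t)}$ is bounded \emph{above} by $2^{n+1}\lambda$ in $B$-norm and changes value at most $\gJ_{2^{n}\lambda}$ times. With the size bounded above one may estimate the $\hV^{q}$ norm of this increment by the $\ell^{q}$ norm over the jump set, then by the $\ell^{r}$ norm (since $q>r$), and only then apply Fubini together with the definition of $M$; this produces $M\cdot 2^{n}\lambda\cdot \gJ_{2^{n}\lambda}^{1/r}$ at each level, and summing over $n$ gives $\int M\gJ_{\lambda}^{1/r}\,d\lambda$. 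In effect, the multi-scale decomposition is exactly what allows the exchange of the $\hV$-supremum with the $L^{r}_{y}$-integral that your single-scale argument cannot achieve.
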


\begin{proof}
It suffices to consider finite sequences $(c_{t})_{t=1}^{T}$ as long as the bounds do not depend on $T$.
We may assume that the minimal jump size $\min_{t<T} \|c_{t}-c_{t+1}\|_{B} > \lambda > 0$, otherwise one can remove some of the terms from the sequence $(c_{t})$.
We construct a sequence of increasingly coarse partitions of $\{1,\dots,T\}$ into blocks with bounded $\infty$-variation and jumps between blocks in such a way that both the upper bounds on the $\infty$-variation and the lower bounds on the jumps increase exponentially.
To this end we recursively define a sequence of functions $\rho(n,\cdot) : \{1,\dots,T\} \to \{1,\dots,T\}$.
We begin with
\[
\rho(0,t) = t.
\]
Suppose that $\rho(n,\cdot)$ has been defined for some $n$ and define $\rho(n+1,t)$ by recursion in $t$ starting with $\rho(n+1,1)=1$ by
\[
\rho(n+1,t+1) :=
\begin{cases}
\rho(n+1,t) & \text{if } \|c_{\rho(n+1,t)} - c_{\rho(n,t+1)}\|_{B} \leq 2^{n+1}\lambda,\\
\rho(n,t+1) & \text{otherwise.}
\end{cases}
\]
It follows that $\rho(n,t)$ is monotonically increasing in $t$ and monotonically decreasing in $n$.
Moreover, for all $n$ and $t$ we have
\begin{align}
&\|c_{\rho(n,t)} - c_{\rho(n,t+1)}\|_{B} > 2^{n}\lambda  \qquad\text{provided } \rho(n,t)\neq \rho(n,t+1),\label{eq:lem:NOT:3.2:jump-size-lower}\\
&\|c_{\rho(n,t)}-c_{\rho(n+1,t)}\|_{B} \leq 2^{n+1} \lambda, \label{eq:lem:NOT:3.2:jump-size-upper}\\
&\rho(n+1,t+1) \neq \rho(n+1,t) \implies \rho(n,t+1)\neq\rho(n,t). \label{eq:lem:NOT:3.2:jump-n-k}
\end{align}
The implication \eqref{eq:lem:NOT:3.2:jump-n-k} can be easily seen by the contrapositive and a case distinction in the definition of $\rho(n+1,t)$.
Note that \eqref{eq:lem:NOT:3.2:jump-size-lower} implies $\rho(n,t)=1$ for all $t$ if $n$ is sufficiently large.
Write
\[
c_{t} = c_{1} + \sum_{n=0}^{\infty} (c_{\rho(n,t)}-c_{\rho(n+1,t)}).
\]
By subadditivity of the homogeneous variation norm we have
\[
\| \| \< c_{t}, g(y) \> \|_{\hV^{q}_{t}} \|_{L^{r}_{y}(I)}
\leq
\sum_{n=0}^{\infty} \| \| \<c_{\rho(n,t)}-c_{\rho(n+1,t)}, g(y)\> \|_{\hV^{q}_{t}} \|_{L^{r}_{y}(I)}.
\]
For each $n$ we estimate the corresponding summand.
Observe that the lower bound on the jump size in \eqref{eq:lem:NOT:3.2:jump-size-lower} implies that the sequence $\rho(n,\cdot)$ makes at most $\gJ_{2^{n}\lambda}$ jumps, before places $J_{n}\subset\{1,\dots,T\}$, say.
Note that $J_{n+1} \subset J_{n}$ by \eqref{eq:lem:NOT:3.2:jump-n-k}.
Hence the variation norm in the summand collapses to the subsequence
\[
J_{n}' =
\begin{cases}
\{1\} \cup J_{n} & \text{if } J_{n}\neq\emptyset,\\
\emptyset & \text{otherwise.}
\end{cases}
\]
On that subsequence we estimate the $\hV^{q}$ norm by the $\ell^{q}$ norm, thereby obtaining the following bound for the $n$-th summand:
\begin{equation}
\label{eq:lem:NOT:3.2:summand}
\| \| \<c_{\rho(n,t)}-c_{\rho(n+1,t)}, g(y)\> \|_{\ell^{q}_{t\in J_{n}'}} \|_{L^{r}_{y}(I)}.
\end{equation}
The first way to proceed from here is to estimate the $\ell^{q}$ norm by the $\ell^{r}$ norm and to change the order of integration (in $t$ and $y$).
Using \eqref{eq:lem:NOT:3.2:jump-size-upper} this gives the bound
\[
M 2^{n+1} \lambda |J_{n}'|^{1/r}
\lesssim
M 2^{n} \lambda \gJ_{2^{n}\lambda}^{1/r}.
\]
The second way to proceed is to estimate the dual pairing by the product of norms, which gives for \eqref{eq:lem:NOT:3.2:summand} the estimate
\[
\| \| \|c_{\rho(n,t)}-c_{\rho(n+1,t)}\|_{B} \|g(y)\|_{B'} \|_{\ell^{q}_{t\in J_{n}'}} \|_{L^{r}_{y}(I)}
\]
By \eqref{eq:lem:NOT:3.2:jump-size-upper} this gives the bound
\[
\|g\|_{L^{r}(I,B')} 2^{n+1}\lambda |J_{n}'|^{1/q}
\lesssim
\|g\|_{L^{r}(I,B')} 2^{n}\lambda \gJ_{2^{n}\lambda}^{1/q}.
\]
Combining these estimates we obtain
\[
\| \| \< c_{t}, g(y)\> \|_{\hV^{q}_{t}} \|_{L^{r}_{y}(I)}
\lesssim
\sum_{n\in\N} 2^{n}\lambda \min(M \gJ_{2^{n}\lambda}^{1/r}, \|g\|_{L^{r}(I,B')} \gJ_{2^{n}\lambda}^{1/q}),
\]
and the claim follows by monotonicity of the jump counting function.
\end{proof}

\begin{corollary}[{cf.\ \cite[Lemma 3.2]{MR2653686}}]
\label{cor:NOT:3.2}
Let $G\subset \Z^{d}$ be a set of size $N$ and $(\xi_{l})_{l\in G} \subset \R^{d}$, $I_{i}$, $I$ be as in Lemma~\ref{lem:alm-orth}.
For any $2<r<q$ and any sequence $(c_{t}) \subset \ell^{2}[G]$ we have
\[
\| \| \sum_{l\in G} c_{t,l} e(\xi_{l}\cdot y) \|_{\hV^{q}_{t}} \|_{L^{2}_{y}(I)}
\lesssim
\left( \frac{q}{q-r} + \frac{2}{r-2} \right)
|I|^{1/2} N^{(\frac12 - \frac1r) \frac{q}{q-2}} \|c_{t}\|_{\hV^{r}_{t}(\ell^{2}[G])},
\]
where the implied constant is absolute.
\end{corollary}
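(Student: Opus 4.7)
The plan is to instantiate Lemma~\ref{lem:NOT:3.2} with the (self-dual) normed space $B = \ell^{2}[G]$, integrability exponent $2$ (playing the role of $r$ in the lemma), and the vector-valued function $g(y) = (e(\xi_{l}\cdot y))_{l\in G}$, so that the pairing $\<c_{t}, g(y)\>$ reproduces exactly the exponential sum whose variation we wish to control. The lemma's hypothesis $q>r$ then becomes $q>2$, which is implied by $q>r>2$.

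Next I would compute the two scalar inputs demanded by the lemma. For the constant
\[
M = \sup_{\|c\|_{\ell^{2}[G]}=1} \Big\| \sum_{l\in G} c_{l}\, e(\xi_{l}\cdot y) \Big\|_{L^{2}_{y}(I)},
\]
Lemma~\ref{lem:alm-orth} directly gives $M \lesssim |I|^{1/2}$. For the dual-space norm, the identity $\|g(y)\|_{\ell^{2}[G]}^{2} \equiv N$ yields $\|g\|_{L^{2}(I,\ell^{2}[G])} = (N|I|)^{1/2}$. To bring the $\hV^{r}$ norm on the right-hand side of the corollary into play, I would apply \eqref{eq:jump<var} to the sequence $(c_{t})\subset \ell^{2}[G]$: writing $V := \|c_{t}\|_{\hV^{r}_{t}(\ell^{2}[G])}$ we obtain $\gJ_{\lambda}(c) \leq (V/\lambda)^{r}$.

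Substituting these ingredients into the conclusion of Lemma~\ref{lem:NOT:3.2} reduces the problem to estimating
\[
\int_{0}^{\infty} \min\Big( |I|^{1/2} (V/\lambda)^{r/2},\; (N|I|)^{1/2} (V/\lambda)^{r/q} \Big) \dif\lambda.
\]
I would split this integral at the crossover point $\lambda_{*} = V N^{-q/(r(q-2))}$, at which the two expressions inside the minimum coincide. On $\lambda > \lambda_{*}$ the first expression is smaller, producing $\int \lambda^{-r/2}\dif\lambda$ that converges at infinity because $r>2$, yielding the factor $\tfrac{2}{r-2}$; on $\lambda < \lambda_{*}$ the second is smaller, producing $\int \lambda^{-r/q}\dif\lambda$ that converges at the origin because $r<q$, yielding the factor $\tfrac{q}{q-r}$. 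A brief computation then verifies that both pieces contribute the identical power $|I|^{1/2} V \, N^{(\frac12 - \frac1r)\frac{q}{q-2}}$, which is precisely the stated bound.

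The only genuine obstacle is the bookkeeping: choosing $\lambda_{*}$ so that the two halves of the split integral produce matching $N$-exponents. Beyond this, the corollary is a mechanical consequence of the interpolation device built into Lemma~\ref{lem:NOT:3.2} together with the almost-orthogonality estimate furnished by Lemma~\ref{lem:alm-orth}; no further analytic input is required.
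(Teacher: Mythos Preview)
Your proposal is correct and follows essentially the same approach as the paper: both instantiate Lemma~\ref{lem:NOT:3.2} with $B=\ell^{2}[G]$ and the lemma's exponent equal to $2$, identify $M\lesssim |I|^{1/2}$ via Lemma~\ref{lem:alm-orth} and $\|g\|_{L^{2}(I,B')}=(N|I|)^{1/2}$, bound $\gJ_{\lambda}\leq (V/\lambda)^{r}$ using \eqref{eq:jump<var}, and split the resulting integral at the same crossover $\lambda_{*}=V N^{-q/(r(q-2))}$. Your computation of the two pieces and their constants $\tfrac{q}{q-r}$, $\tfrac{2}{r-2}$ matches the paper's exactly.
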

\begin{proof}
We apply Lemma~\ref{lem:NOT:3.2} with $B=B'=\ell^{2}[G]$, $r=2$, and $g(y)=(e(\xi_{l}\cdot y))_{l\in G}$.
Then $\|g\|_{L^{2}(I,B')} = N^{1/2} |I|^{1/2}$ and $M\lesssim |I|^{1/2}$ by Lemma~\ref{lem:alm-orth}.
Hence we obtain
\[
\| \| \sum_{l\in G} c_{t,l} e(\xi_{l}\cdot y) \|_{\hV^{q}_{t}} \|_{L^{2}_{y}(I)}
\lesssim
|I|^{1/2} \int_{0}^{\infty} \min(\gJ_{\lambda}^{1/2}, N^{1/2} \gJ_{\lambda}^{1/q}) \dif\lambda.
\]
We have $\gJ_{\lambda}(c) \leq a^{r}/\lambda^{r}$ with $a=\|c_{t}\|_{\hV^{r}_{t}(\ell^{2}[G])}$.
Splitting the integral at $\lambda_{0}=aN^{-1/(2r(1/2-1/q))}$ we obtain
\begin{multline*}
\int_{0}^{\lambda_{0}} N^{1/2} (a^{r}/\lambda^{r})^{1/q} \dif\lambda
+
\int_{\lambda_{0}}^{\infty} (a^{r}/\lambda^{r})^{1/2} \dif\lambda\\
=
N^{1/2}a^{r/q} (-r/q+1)^{-1} \lambda_{0}^{-r/q+1}
-
a^{r/2} (-r/2+1)^{-1} \lambda_{0}^{-r/2+1}\\
=
a N^{(\frac12 - \frac1r)\frac{q}{q-2}} ((1-r/q)^{-1} + (r/2-1)^{-1}).
\qedhere
\end{multline*}
\end{proof}

\section{Fourier multipliers on $\R^{d}$}
\label{sec:R}
The main result of this section, Proposition~\ref{prop:4.13}, is a multiple-frequency variation inequality on $L^{2}(\R^{d})$ with a good (logarithmic) dependence of the bounds on the number of frequencies involved in it.
We begin by recalling several variation inequalities due to Jones, Seeger, and Wright, limiting ourselves to the minimal level of generality required in our applications.
The first is a special case of \cite[Lemma 2.1]{MR2434308}, which goes back to Bourgain's argument from \cite[\textsection 3]{MR1019960}.
\begin{lemma}
\label{lem:lambdaN-to-Vq-bound}
Let $(X,\mu)$ be a measure space and $(T_{i})_{i\in \Nker\subset\R}$ be a family of continuous linear operators on $L^{p}(X)$, $1<p<\infty$, that are contractive on $L^{\infty}(X)$ and such that $T_{i}f(x)$ is continuous in $i$ for almost every $x$.
Suppose that
\begin{equation}
\label{eq:lambdaN-to-Vq-bound:assumption}
\sup_{\lambda>0} \| \lambda (\gJ_{\lambda,i}(T_{i}f)(x))^{1/2} \|_{L^{r}(X,\mu)}
\lesssim_{r} \| f \|_{L^{r}(X,\mu)}
\end{equation}
for every $1<r<\infty$ and every characteristic function $f=\chi_{A}$ of a finite measure subset $A\subset X$.
Then for every $q>2$ and $1<p<\infty$ we have
\begin{equation}
\label{eq:lambdaN-to-Vq-bound:conclusion}
\| \|T_{i}f(x)\|_{\hV^{q}_{i\in\Nker}} \|_{L^{p}(X,\mu)} \lesssim_{p} \frac{q}{q-2} \| f \|_{L^{p}(X,\mu)}.
\end{equation}
\end{lemma}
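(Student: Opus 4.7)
The proof proceeds in two steps: first upgrade the hypothesized jump bound from characteristic functions (with the supremum in $\lambda$ outside the $L^r$-norm) to a strong-type $L^p$ bound for arbitrary $f\in L^p$ (with the supremum inside), and then deduce the variation estimate from a clean pointwise bound of $\hV^q$ in terms of the 2-jump function.

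\emph{Step 1.} Set $Jf(x) := \sup_{\lambda>0} \lambda\,\gJ_{\lambda,i}(T_i f)(x)^{1/2}$. The goal of this step is
\[
\|Jf\|_{L^p(X)} \lesssim_p \|f\|_{L^p(X)}, \qquad 1<p<\infty,\ f\in L^p.
\]
This requires both moving the supremum in $\lambda$ inside the $L^p$-norm and extending from characteristic functions to general $L^p$ inputs. The extension is carried out by a Marcinkiewicz/layer-cake argument: decompose $|f|=\sum_j 2^j\chi_{E_j}$ with $E_j=\{|f|>2^j\}$, apply the hypothesis to each layer at an exponent $r$ close to $p$, and sum. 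The $L^\infty$-contractivity of the $T_i$ is essential, both to interpolate with the trivial $L^\infty$ endpoint and to handle cross-layer interactions; the quasi-subadditivity of the jump counting function drives the summation. This is the only step in which the hypothesis is used.

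\emph{Step 2.} The inequality \eqref{eq:var<jump} gives pointwise
\[
\|T_i f\|_{\hV^q_i} \leq 4\Bigl(\sum_{k\in\Z} 2^{kq}\gJ_{2^k,i}(T_i f)\Bigr)^{1/q}.
\]
By definition of $J$, one has $\gJ_\lambda(T_i f)(x) \leq (Jf(x)/\lambda)^2$ pointwise, and moreover $\gJ_\lambda(T_i f)(x) = 0$ whenever $\lambda > Jf(x)$ (since $\gJ_\lambda \geq 1$ forces $Jf \geq \lambda\,\gJ_\lambda^{1/2} \geq \lambda$). Combining the two bounds, the dyadic sum truncates and telescopes geometrically:
\[
\sum_{k\in\Z} 2^{kq}\,\gJ_{2^k,i}(T_i f) \leq (Jf)^2 \sum_{k\leq\log_2 Jf} 2^{k(q-2)} \leq \frac{(Jf)^q}{1 - 2^{-(q-2)}}.
\]
Taking $q$-th roots and $L^p$ norms, and then invoking Step 1,
\[
\bigl\|\,\|T_i f\|_{\hV^q_i}\,\bigr\|_{L^p} \lesssim \bigl(1-2^{-(q-2)}\bigr)^{-1/q}\|Jf\|_{L^p} \lesssim_p \tfrac{q}{q-2}\,\|f\|_{L^p},
\]
the last step using the elementary estimate $(1-2^{-(q-2)})^{-1/q} \leq C\,q/(q-2)$ valid for all $q>2$.

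The main obstacle is Step 1: the jump counting function is nonlinear and only quasi-subadditive in $f$, so the extension from the restricted hypothesis requires carefully exploiting the $L^\infty$-contractivity to absorb the interactions between different layers of the decomposition. Step 2, by contrast, is essentially algebraic, and the factor $q/(q-2)$ appears there as the cost of summing a dyadic geometric series with ratio $2^{q-2}$ near its critical exponent.
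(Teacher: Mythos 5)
The paper does not give its own proof of this lemma: it is cited as a special case of \cite[Lemma 2.1]{MR2434308} (which in turn goes back to Bourgain), and the paragraph following the statement only explains how the qualitative hypotheses are used there — a.e.\ pointwise continuity in $i$ reduces to countable, then finite, index sets; the proof proceeds by restricted strong type estimates and interpolation; and $L^{p}$-continuity of the individual $T_{i}$ extends the result from simple functions to all of $L^{p}$. So there is no internal proof to compare against, and I assess your argument on its own merits.

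Your Step~1 has a genuine gap. The hypothesis \eqref{eq:lambdaN-to-Vq-bound:assumption} is a $\lambda$-uniform jump estimate: for each \emph{fixed} $\lambda$ the map $f\mapsto\lambda\gJ_{\lambda,i}(T_{i}f)^{1/2}$ is bounded on $L^{r}$, with the supremum over $\lambda$ taken \emph{outside} the norm. Your $Jf=\sup_{\lambda}\lambda\gJ_{\lambda}^{1/2}(T_{i}f)$ places the supremum \emph{inside} the norm, which is precisely the pointwise $\hV^{2,\infty}$-type quasinorm of the sequence $(T_{i}f(x))_{i}$; the claimed bound $\|Jf\|_{L^{p}}\lesssim\|f\|_{L^{p}}$ is a maximal-function estimate that is strictly stronger than the hypothesis and does not follow from uniform boundedness of the family (compare: translations are uniformly bounded on $L^{r}$, yet the supremum over all translates is unbounded). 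Your sketch via layers and quasi-subadditivity does not close the gap: the lazy jump count satisfies only $\lJ_{\lambda}(g_{1}+\dots+g_{m})\leq\sum_{j}\lJ_{\lambda/m}(g_{j})$, a bound whose threshold loss degenerates as the number of layers grows; and there is no $L^{\infty}$ endpoint for $J$, since $\|f\|_{\infty}\leq 1$ and $L^{\infty}$-contractivity only kill jumps above a fixed size, while $\gJ_{\lambda}$ can still be arbitrarily large for small $\lambda$. There is also an internal warning sign: your Step~2, starting from the sup-inside bound, yields the constant $(1-2^{-(q-2)})^{-1/q}\sim(q-2)^{-1/2}$, strictly better than the $q/(q-2)\sim(q-2)^{-1}$ of the lemma; if the $\hV^{2,\infty}$ bound really followed from the hypothesis, the lemma would be stated with the better constant. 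The argument of Jones--Seeger--Wright keeps the supremum outside throughout: fixing $f=\chi_{A}$, one uses \eqref{eq:var<jump} to bound $\hV^{q}(T_{i}f)^{q}\lesssim\sum_{k}2^{kq}\gJ_{2^{k}}$, feeds the hypothesis at exponents $r$ bracketing $p$ into each term to obtain a restricted strong type bound (the geometric series here produces $q/(q-2)$), then interpolates these restricted bounds and finally invokes $L^{p}$-continuity of the $T_{i}$ to pass from simple functions to all of $L^{p}$.
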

Let us point out how the various qualitative assumptions are used in the proof of \cite[Lemma 2.1]{MR2434308}.
By the qualitative assumption of pointwise continuity almost everywhere the problem reduces to countable index sets $I\subset\R$, and in particular the jump counting functions and the pointwise variation norms become measurable functions on $X$.
This in turn allows one to use monotone convergence to reduce the problem to finite sets $I$.
The proof proceeds by establishing restricted strong type estimates, which are then interpolated to the requested strong type estimates.
However, these are a priori obtained for simple functions (finite linear combinations of characteristic functions), and the qualitative assumption of $L^{p}$ continuity of the individual operators is needed to pass to the full $L^{p}$ space.

\subsection{A variation inequality for a single frequency}
We will apply Lemma~\ref{lem:lambdaN-to-Vq-bound} in the setting of convolution operators.
Let $\cm$ be the measure on $\R^{d}$ defined by
\begin{equation}
\label{eq:cm}
\int f \dif\cm = \frac1t \int_{s=0}^{t} f(s^{1},s^{2},\dots,s^{d}) \dif s.
\end{equation}
The following result is stated in a remark following \cite[Theorem 1.5]{MR2434308}.
\begin{theorem}
\label{thm:conv-var}
For any $1<p<\infty$ we have
\[
\sup_{\lambda>0} \| \lambda \lJ_{\lambda,t\in\R}( f * \cm(x) )^{1/2} \|_{L^{p}_{x}(\R^{d})}
\lesssim_{p} \|f\|_{L^{p}(\R^{d})}.
\]
\end{theorem}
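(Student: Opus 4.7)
The theorem is a special case of the framework of \cite[Theorem 1.5]{MR2434308} (from which it is stated as a direct consequence), and the plan is to follow that paper's long/short-variation strategy. The key structural fact is non-isotropic dilation invariance: under $\delta_r(x_1,\ldots,x_d) := (rx_1, r^2x_2, \ldots, r^dx_d)$ one has $\cm[rt] = (\delta_r)_*\cm[t]$, so $(\cm[2^k])_{k\in\Z}$ is a dyadic dilation family. Splitting the index set $\R_{>0}$ into the dyadic skeleton $\{2^k\}_{k\in\Z}$ and the intervening intervals, it suffices to prove a jump bound for the dyadic subfamily $\sup_{\lambda>0}\|\lambda\lJ_{\lambda,k\in\Z}^{1/2}(f*\cm[2^k])\|_{L^p} \lesssim_p \|f\|_{L^p}$ and a short-variation square function bound $\|(\sum_k \|f*\cm[t]\|_{\hV^2_{t\in[2^k,2^{k+1}]}}^2)^{1/2}\|_{L^p} \lesssim_p \|f\|_{L^p}$.

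For the long (dyadic) piece, I would subtract a smooth model family. Fix $\phi \in \mathcal{S}(\R^d)$ with $\widehat\phi(0)=1$ and let $\phi_t := \phi\circ\delta_{1/t}$, non-isotropically $L^1$-normalized; the family $(\phi_{2^k})$ is a dilation-invariant approximation to the identity for which the jump inequality is standard Littlewood-Paley theory. The error $\cm[2^k] - \phi_{2^k}$ has Fourier multiplier $m_k(\xi) = m_1(\delta_{2^k}\xi)$ with $m_1(\xi) = \cmf[1](\xi) - \widehat\phi(\xi)$; by construction $m_1(0)=0$, so $|m_1(\xi)| \lesssim |\xi|$ near the origin, while van der Corput's lemma applied to $\cmf[1](\xi) = \int_0^1 e(-\xi\cdot\gamma(s))\,ds$ with $\gamma(s)=(s,s^2,\ldots,s^d)$ yields $|\cmf[1](\xi)| \lesssim |\xi|^{-1/d}$ for large $|\xi|$. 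Consequently $\sum_{k\in\Z} |m_k(\xi)|^2 \lesssim 1$ uniformly in $\xi$, so Plancherel gives
\[
\sum_k \|(\cm[2^k]-\phi_{2^k})*f\|_{L^2}^2 \lesssim \|f\|_{L^2}^2,
\]
which dominates the $\ell^2$-variation, and hence $\sup_\lambda \lambda \lJ_\lambda^{1/2}$ via \eqref{eq:jump<var}, in $L^2$. A vector-valued Calder\'on-Zygmund extrapolation, using non-isotropic H\"ormander-type smoothness of the $m_k$, then extends this to $1<p<\infty$.

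For the short variation, Cauchy-Schwarz gives $\|f*\cm[t]\|_{\hV^2_{t\in[2^k,2^{k+1}]}}^2 \leq 2^k \int_{2^k}^{2^{k+1}} |\partial_t(f*\cm[t])|^2\,dt$, so summing reduces the task to an $L^p$ bound for the Littlewood-Paley $g$-function
\[
Gf(x) := \Big(\int_0^\infty t\,|\partial_t(f*\cm[t])(x)|^2\,dt\Big)^{1/2}.
\]
Plancherel combined with the van der Corput decay of $\partial_t\cmf(\xi)$ (using $\cmf(\xi) = \cmf[1](\delta_t\xi)$) shows that the multiplier $\int_0^\infty t|\partial_t\cmf(\xi)|^2\,dt$ is bounded, yielding $\|Gf\|_{L^2} \lesssim \|f\|_{L^2}$, and vector-valued Calder\'on-Zygmund adapted to the non-isotropic dilations extends this to all $1<p<\infty$.

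The main obstacle I foresee is verifying the H\"ormander-type regularity of the multipliers $m_k$ and of the $g$-function kernel with respect to the non-isotropic metric, so that the Calder\'on-Zygmund extrapolation is applicable throughout $(1,\infty)$. Once that input is available, the rest is a direct assembly of Plancherel, van der Corput decay, and classical Littlewood-Paley theory.
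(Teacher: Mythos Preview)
The paper does not supply its own proof of this theorem; it simply quotes the result from a remark following \cite[Theorem~1.5]{MR2434308}. Your proposal is therefore not to be compared against an argument in the paper but against the Jones--Seeger--Wright machinery itself, whose long/short scheme you have correctly identified. The long-variation half of your sketch (subtract a non-isotropically dilated smooth bump, control the dyadic error in $\ell^{2}_{k}(L^{2})$ by Plancherel and van der Corput, then extrapolate) is indeed the approach taken there; the regularity check you flag as the obstacle is real but is handled in \cite{MR2434308} by a further dyadic frequency decomposition of $\cm[1]$.

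Your short-variation argument, however, contains a genuine error. Differentiating $\cm = \tfrac{1}{t}\int_{0}^{t}\delta_{\gamma(s)}\,ds$ gives $t\,\partial_{t}\cm = \delta_{\gamma(t)} - \cm$, so on the Fourier side
\[
t\,\partial_{t}\cmf(\xi) \;=\; e(-\xi\cdot\gamma(t)) - \cmf(\xi).
\]
Van der Corput makes the second term decay as $t\to\infty$ for fixed $\xi\neq 0$, but the first term is unimodular, so $|t\,\partial_{t}\cmf(\xi)|\to 1$. Hence
\[
\int_{0}^{\infty} t\,|\partial_{t}\cmf(\xi)|^{2}\,dt
= \int_{0}^{\infty} \frac{1}{t}\,\bigl|e(-\xi\cdot\gamma(t)) - \cmf(\xi)\bigr|^{2}\,dt
\]
diverges at infinity, and the $g$-function $Gf$ you wrote down is already unbounded on $L^{2}$. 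The point mass $\delta_{\gamma(t)}$ in $\partial_{t}\cm$ is precisely what distinguishes curve averages from dilates of a smooth bump, where your argument would have been fine.

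The repair in \cite{MR2434308} is an additional frequency decomposition of $\cm$ adapted to the non-isotropic dilations. On the low-frequency piece the measure becomes smooth after convolution with the cutoff, the point mass disappears, and your Cauchy--Schwarz/$g$-function argument goes through. On the high-frequency pieces one does not differentiate in $t$; instead one controls the short $\hV^{2}$ norm more directly and sums the pieces using the $\rho(\xi)^{-1/d}$ decay of $\cmf[1]$. That splitting is the missing idea in your proposal.
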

\begin{corollary}
\label{cor:3.30a}
For any $1<p<\infty$ and any $s>2$ we have
\[
\| \| f * \cm(x) \|_{\hV^{s}_{t>0}} \|_{L^{p}_{x}(\R^{d})}
\lesssim_{p}
\frac{s}{s-2} \|f\|_{L^{p}(\R^{d})},
\]
where the implied constant does not depend on $s$.
\end{corollary}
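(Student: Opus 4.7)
The plan is to deduce the corollary directly from Lemma~\ref{lem:lambdaN-to-Vq-bound} by taking $\Nker = (0,\infty) \subset \R$ and $T_{t}f := f * \cm$ on $X = \R^{d}$ with Lebesgue measure. The passage from the jump bound for $\cm$ in Theorem~\ref{thm:conv-var} to an $\hV^{s}$ bound is precisely what Lemma~\ref{lem:lambdaN-to-Vq-bound} is designed for, and the explicit constant $s/(s-2)$ in the conclusion is exactly the factor $q/(q-2)$ appearing in \eqref{eq:lambdaN-to-Vq-bound:conclusion}.

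First I verify the qualitative hypotheses of Lemma~\ref{lem:lambdaN-to-Vq-bound}. Because $\cm$ is a probability measure on $\R^{d}$, Young's inequality shows that each $T_{t}$ is continuous on every $L^{p}(\R^{d})$ and, in particular, contractive on $L^{\infty}(\R^{d})$. For pointwise continuity of $t \mapsto T_{t}f(x)$, I first observe that for $f$ smooth and compactly supported the continuity is immediate from \eqref{eq:cm}; the extension to arbitrary $f \in L^{p}$ follows by the usual density argument, using the a.e.\ finiteness of $\sup_{t}|T_{t}f|$, which is implicit in the jump bound of Theorem~\ref{thm:conv-var} (the case $\lambda$ large controls the sup via a single jump from $0$).

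The quantitative hypothesis \eqref{eq:lambdaN-to-Vq-bound:assumption} is then supplied by Theorem~\ref{thm:conv-var}: for any $1 < r < \infty$ and any characteristic function $f = \chi_{A}$ of a set of finite measure,
\[
\sup_{\lambda>0} \| \lambda (\gJ_{\lambda,t}(T_{t}f)(x))^{1/2} \|_{L^{r}_{x}(\R^{d})}
\leq \sup_{\lambda>0} \| \lambda (\lJ_{\lambda,t}(T_{t}f)(x))^{1/2} \|_{L^{r}_{x}(\R^{d})}
\lesssim_{r} \|f\|_{L^{r}(\R^{d})},
\]
where the first inequality is the pointwise relation $\gJ_{\lambda} \leq \lJ_{\lambda}$ recorded in \textsection\ref{sec:var}. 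Lemma~\ref{lem:lambdaN-to-Vq-bound} then yields the desired bound with constant $\lesssim_{p} s/(s-2)$.

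I do not foresee a genuine obstacle here: the result is a black-box assembly of Theorem~\ref{thm:conv-var} with Lemma~\ref{lem:lambdaN-to-Vq-bound}, and the only slightly delicate point is the pointwise continuity assumption, which is handled by density together with the fact that the maximal function associated to the family $\{\cm\}_{t>0}$ is finite almost everywhere.
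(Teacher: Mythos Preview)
Your proposal is correct and follows the same approach as the paper: apply Lemma~\ref{lem:lambdaN-to-Vq-bound} to the family $T_{t}f = f * \cm$, with the jump hypothesis supplied by Theorem~\ref{thm:conv-var}. The only difference is in the verification of pointwise continuity: instead of a density argument backed by the maximal inequality, the paper observes directly via Fubini that $s \mapsto f(x_{1}+s,\dots,x_{d}+s^{d})$ is locally integrable for almost every $x$, which immediately makes $t \mapsto f * \cm(x)$ continuous.
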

\begin{proof}
One can show that the function $s\mapsto f(x_{1}+s^{1},\dots,x_{d}+s^{d})$ is locally integrable for almost every $(x_{1},\dots,x_{d})$ using Fubini's theorem.
This implies that $t\mapsto f * \cm(x)$ is continuous in $t$, and in view of Theorem~\ref{thm:conv-var} we may apply Lemma~\ref{lem:lambdaN-to-Vq-bound}.
\end{proof}

This implies the following variation version of \cite[Lemma 3.30]{MR1019960}.
\begin{corollary}
\label{cor:3.30}
For any $2\leq p<\infty$, any set $G$, and any $s>2$ we have
\[
\| \| f_{l} * \cm(x) \|_{\hV^{s}_{t>0}(\ell^{2}_{l\in G})} \|_{L^{p}_{x}(\R^{d})}
\lesssim_{p}
\frac{s}{s-2} \| \|f_{l}\|_{L^{p}(\R^{d})} \|_{\ell^{2}_{l\in G}},
\]
where the implied constant does not depend on $G$ and $s$.
\end{corollary}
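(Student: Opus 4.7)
The plan is to reduce the vector-valued estimate to the scalar estimate of Corollary~\ref{cor:3.30a} by a single application of Minkowski's integral inequality, using the hypothesis $p\geq 2$.

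Concretely, set $h_{l}(x) := \| f_{l} * \cm(x) \|_{\hV^{s}_{t>0}}$, which is a nonnegative (measurable) function of $x$ for each $l\in G$. The left-hand side of the claim is then $\| \| h_{l}(x) \|_{\ell^{2}_{l}} \|_{L^{p}_{x}}$. Since $p/2 \geq 1$, applying Minkowski's inequality to the $L^{p/2}$-norm of $\sum_{l} |h_{l}|^{2}$ yields
\[
\Big\| \| h_{l}(x) \|_{\ell^{2}_{l\in G}} \Big\|_{L^{p}_{x}(\R^{d})}^{2}
= \Big\| \sum_{l\in G} h_{l}^{2} \Big\|_{L^{p/2}_{x}}
\leq \sum_{l\in G} \| h_{l}^{2} \|_{L^{p/2}_{x}}
= \sum_{l\in G} \| h_{l} \|_{L^{p}_{x}}^{2}.
\]
In other words, $\| \| h_{l} \|_{\ell^{2}_{l}} \|_{L^{p}} \leq \| \| h_{l} \|_{L^{p}} \|_{\ell^{2}_{l}}$, which is the standard embedding $L^{p}(\ell^{2}) \hookrightarrow \ell^{2}(L^{p})$ valid for $p\geq 2$.

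Now for each fixed $l\in G$, Corollary~\ref{cor:3.30a} gives
\[
\| h_{l} \|_{L^{p}_{x}(\R^{d})} = \| \| f_{l} * \cm(x) \|_{\hV^{s}_{t>0}} \|_{L^{p}_{x}} \lesssim_{p} \frac{s}{s-2} \| f_{l} \|_{L^{p}(\R^{d})}
\]
with the implied constant independent of $l$ and $s$. Substituting this pointwise-in-$l$ bound into the previous display and taking $\ell^{2}$ norms over $l\in G$ yields the required inequality.

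I do not expect any genuine obstacle: the argument is a free vector-valued extension in the range $p\geq 2$, and no structure of the operator beyond its scalar $L^{p}$ bound is invoked. (The same trick would fail for $p<2$, which is consistent with the scope $2\leq p<\infty$ of the statement.) The only point to verify is measurability of $h_{l}$, which follows, as in the proof of Corollary~\ref{cor:3.30a}, from the a.e.\ continuity of $t\mapsto f_{l}*\cm(x)$.
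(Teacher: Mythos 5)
There is a genuine gap in your proposal, at the very first step. You write that the left-hand side of the claim equals $\| \| h_{l}(x) \|_{\ell^{2}_{l}} \|_{L^{p}_{x}}$ with $h_{l}(x) := \| f_{l} * \cm(x) \|_{\hV^{s}_{t>0}}$. But the inner norm in the statement is $\|\cdot\|_{\hV^{s}_{t>0}(\ell^{2}_{l\in G})}$, which is the $s$-variation over $t$ of the \emph{Hilbert-space-valued} sequence $t\mapsto (f_{l}*\cm(x))_{l\in G}$. This is not the same object as $\| h_{l}(x) \|_{\ell^{2}_{l}}$, which is the $\ell^{2}$ norm over $l$ of the \emph{scalar} variations. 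In general one only has an inequality between them: for any finite sequence $t_{1}<\dots<t_{J}$,
\[
\| f_{l}*\cm[t_{j}](x) - f_{l}*\cm[t_{j-1}](x) \|_{\ell^{s}_{j}(\ell^{2}_{l})}
\leq
\| f_{l}*\cm[t_{j}](x) - f_{l}*\cm[t_{j-1}](x) \|_{\ell^{2}_{l}(\ell^{s}_{j})},
\]
which holds by Minkowski's inequality precisely because $s\geq 2$, and taking the supremum over all such sequences gives
\[
\| f_{l} * \cm(x) \|_{\hV^{s}_{t>0}(\ell^{2}_{l\in G})}
\leq
\| h_{l}(x) \|_{\ell^{2}_{l}}.
\]
Notice this is where the hypothesis $s>2$ of the corollary is actually used; in your write-up it is never invoked except implicitly through Corollary~\ref{cor:3.30a}.

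Once this interchange is supplied, the rest of your argument coincides with the paper's proof: the embedding $L^{p}(\ell^{2})\hookrightarrow \ell^{2}(L^{p})$ for $p\geq 2$ (your $L^{p/2}$-Minkowski computation is exactly the paper's second Minkowski step), followed by the scalar single-frequency estimate of Corollary~\ref{cor:3.30a}. So the approach is right in spirit, but you need to add the variation-norm/$\ell^{2}$ interchange as a separate, genuine inequality rather than an identification.
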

\begin{proof}
The case $|G|=1$ is given by Corollary~\ref{cor:3.30a}.

In the general case for any finite sequence $t_{1}<\dots<t_{J}$ and $x\in\R$ we have
\begin{multline*}
\| f_{l}*\cm[t_{j}](x) - f_{l}*\cm[t_{j-1}](x) \|_{\ell^{s}_{j}(\ell^{2}_{l})}
=
\| \| f_{l}*\cm[t_{j}](x) - f_{l}*\cm[t_{j-1}](x) \|_{\ell^{2}_{l}} \|_{\ell^{s}_{j}}\\
\leq
\| \| f_{l}*\cm[t_{j}](x) - f_{l}*\cm[t_{j-1}](x) \|_{\ell^{s}_{j}} \|_{\ell^{2}_{l}}
\leq
\| \| f_{l}*\cm(x) \|_{\hV^{s}_{t}} \|_{\ell^{2}_{l}}
\end{multline*}
by the Minkowski inequality and the assumption $s>2$.
Taking the supremum over all increasing finite sequences we obtain
\begin{equation}
\label{eq:jump-fct-hilbert}
\| f_{l}*\cm(x) \|_{\hV^{s}_{t}(\ell^{2}_{l})}
\leq
\| \| f_{l}*\cm(x) \|_{\hV^{s}_{t}} \|_{\ell^{2}_{l}}.
\end{equation}
Integrating this we obtain
\[
\| \| f_{l}*\cm(x) \|_{\hV^{s}_{t}(\ell^{2}_{l})} \|_{L^{p}_{x}}
\leq
\| \| \| f_{l}*\cm(x) \|_{\hV^{s}_{t}} \|_{\ell^{2}_{l}} \|_{L^{p}_{x}}.
\]
By the Minkowski inequality and the assumption $p\geq 2$ this is bounded by
\[
\| \| \| f_{l}*\cm(x) \|_{\hV^{s}_{t}} \|_{L^{p}_{x}} \|_{\ell^{2}_{l}}.
\]
Using the case $|G|=1$ we obtain the conclusion.
\end{proof}

\subsection{A variation inequality for several frequencies}
A central observation is that Corollaries \ref{cor:NOT:3.2} and \ref{cor:3.30} can be used to show a multi-frequency variation inequality in the same manner as in \cite[Lemma 4.13]{MR1019960}.
\begin{proposition}
\label{prop:4.13}
Let $G\subset\Z^{d}$, $|G|=N$, and $(\xi_{l})_{l\in G}\subset\R^{d}$ be frequencies such that $|\xi_{l,i}-\xi_{l',i}|>|l_{i}-l'_{i}|\tau_{i}$, $\tau_{i}>0$, for all $i=1,\dots,d$ and $l,l'\in G$.
Let $f_{l} \in L^{2}(\R^{d})$, $l\in G$, be functions with $\supp \hat f_{l} \subset [-\tau_{1}/2,\tau_{1}/2]\times\dots\times [-\tau_{d}/2,\tau_{d}/2]$.
Then for any $q>2$ we have
\begin{equation}
\label{eq:prop:4.13:conclusion}
\| \| \sum_{l\in G} e(\xi_{l}\cdot x) (f_{l}*\cm)(x) \|_{\iV^{q}_{t>0}} \|_{L^{2}_{x}(\R^{d})}
\lesssim
\left( \frac{q (\log N+1)}{q-2} \right)^{2} \| \|f_{l}\|_{L^{2}(\R^{d})} \|_{\ell^{2}_{l\in G}}.
\end{equation}
\end{proposition}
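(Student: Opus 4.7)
My plan is to follow Bourgain's strategy from \cite[Lemma 4.13]{MR1019960}, using the multi-frequency exponential-sum bound (Corollary~\ref{cor:NOT:3.2}) together with the $\ell^2$-valued single-frequency bound (Corollary~\ref{cor:3.30}) as the two main inputs. Write $F_t(x) := \sum_{l \in G} e(\xi_l \cdot x)(f_l * \cm)(x)$. Since $\supp \widehat{f_l} \subset [-\tau_1/2, \tau_1/2] \times \cdots \times [-\tau_d/2, \tau_d/2]$ and the $\xi_l$ are separated by more than $\tau_i$ in each coordinate whenever $l_i \neq l_i'$, the summands $e(\xi_l \cdot x)(f_l * \cm)(x)$ have pairwise disjoint Fourier supports. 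Plancherel then yields $\sup_t \|F_t\|_{L^2} \leq \|\|f_l\|_{L^2}\|_{\ell^2_{l \in G}}$ uniformly, which controls the inhomogeneous piece of $\iV^q$; the task reduces to bounding $\|\|F_t\|_{\hV^q_t}\|_{L^2_x}$.

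I would split the homogeneous variation into long and short parts via \eqref{eq:var-long-short}, with respect to a sequence $Z$ chosen denser than dyadic as flagged in the introduction. The denseness of $Z$ makes the short-variation intervals small, so the short-variation contribution can be controlled by a pointwise Cauchy--Schwarz in $l$ (which produces a factor $N^{1/2}$ absorbed by the denseness of $Z$) combined with Corollary~\ref{cor:3.30} applied on each such interval, in the spirit of Lemma~\ref{lem:short-var-lp}.

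The bulk of the work is the long-variation bound for $\|\|F_t\|_{\hV^q_{t \in Z}}\|_{L^2_x}$. I would localize in space by partitioning $\R^d$ into boxes $\{I^{(k)}\}$ of dimensions $\tau_1^{-1} \times \cdots \times \tau_d^{-1}$, on each of which the Fourier-support restriction forces $f_l * \cm$ to be essentially constant. Replacing $(f_l * \cm)(y)$ by the scalar $c_{t,l}^{(k)} := (f_l * \cm)(x^{(k)})$ at the center $x^{(k)}$ reduces the problem on $I^{(k)}$ to the scalar setting of Corollary~\ref{cor:NOT:3.2}, which yields, for any $2 < r < q$,
\[
\bigl\| \|F_t(y)\|_{\hV^q_{t \in Z}} \bigr\|_{L^2_y(I^{(k)})} \lesssim \Bigl(\tfrac{q}{q-r} + \tfrac{2}{r-2}\Bigr) |I^{(k)}|^{1/2} N^{(\frac12-\frac1r)\frac{q}{q-2}} \|c^{(k)}_t\|_{\hV^r_{t \in Z}(\ell^2_{l \in G})}.
\]
After squaring and summing over $k$, a sampling-type identity for band-limited functions (valid since $f_l * \cm$ has the same Fourier support as $f_l$) recovers the global norm $\|\|f_l * \cm\|_{\hV^r_t(\ell^2_{l \in G})}\|_{L^2_x}$, which Corollary~\ref{cor:3.30} dominates by $(r/(r-2)) \|\|f_l\|_{L^2}\|_{\ell^2_{l \in G}}$. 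Optimizing with $r - 2 \sim (q-2)/(\log N + 1)$ makes $N^{(\frac12-\frac1r)\frac{q}{q-2}} = O(1)$ and produces the claimed $((\log N + 1)q/(q-2))^2$ constant from $(q/(q-r) + 2/(r-2)) \cdot r/(r-2)$.

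The main technical obstacle is making the localization rigorous: the replacement of $(f_l * \cm)(y)$ by its value at the center $x^{(k)}$ on each $I^{(k)}$ introduces error terms that must be shown negligible, and the passage from the box-by-box $L^2_y(I^{(k)})$ estimates to a global $L^2_x(\R^d)$ bound has to interface cleanly with the sampling inequality applied inside the $\hV^r_t(\ell^2_{l\in G})$ norm. A smooth partition of unity adapted to the grid $\{I^{(k)}\}$, together with the rapid decay of the Schwartz interpolation kernel guaranteed by the Fourier-support hypothesis, should suffice to control the error contributions.
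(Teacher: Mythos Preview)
Your proposal correctly identifies the two main inputs (Corollaries~\ref{cor:NOT:3.2} and~\ref{cor:3.30}) and the optimization $r-2\sim (q-2)/\log N$, but the route differs from the paper's and the piece you flag as ``the main technical obstacle'' is a genuine gap.

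The paper does \emph{not} split into long and short variation here; that device appears only later, in \textsection\ref{sec:Z}, for a different purpose. Instead it uses Bourgain's averaging/bootstrap trick: let $B_T$ be the best constant in the inequality restricted to a finite set $T$ of times, average the functions $f_l$ over translations $R_u$ with $u$ ranging over a box $I$ of dimensions $\sim\tau_1^{-1}\times\cdots\times\tau_d^{-1}$, and use Bernstein's inequality to get $\|f_l-R_u f_l\|_{L^2}\le c\|f_l\|_{L^2}$ with $c<1/2$, so the translation error is bounded by $(B_T/2)\|\|f_l\|_{L^2}\|_{\ell^2}$. After commuting $R_u$ with $\cm$ and changing variables, the averaged main term becomes an exponential sum in $u$ with coefficients $(f_l*\cm)(x)$, exactly the setup of Corollary~\ref{cor:NOT:3.2}; Corollary~\ref{cor:3.30} then closes the estimate, and one obtains $B_T\le C\big(q\log N/(q-2)\big)^2 + B_T/2$.

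Your spatial localization is the same idea in spirit, but a smooth partition of unity and decay of the sampling kernel do not by themselves dispose of the error: the difference $(f_l*\cm)(y)-(f_l*\cm)(x^{(k)})$ is again $\cm$ convolved with a band-limited function whose $L^2$ norm is comparable to $\|f_l\|_{L^2}$ (mean-value plus Bernstein only gain a factor $\sim 1$ on a box of the critical scale). So a direct estimate of the error has the same strength as the original inequality, and one is forced to feed it back in --- which is exactly the bootstrap the paper performs. Separately, your short-variation step is unclear: Cauchy--Schwarz in $l$ costs $N^{1/2}$ pointwise, and the $\sV^q$ norm sums $q$-th powers of the short variations over \emph{all} intervals of $Z$, so refining $Z$ does not cancel that loss.
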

\begin{proof}
When the variation norm on the left-hand side of \eqref{eq:prop:4.13:conclusion} is replaced by evaluation at $t=1$, say, the $L^{2}$ bound follows from the Plancherel identity.
Hence it suffices to show \eqref{eq:prop:4.13:conclusion} with the homogeneous variation norm $\hV^{q}$.

It suffices to consider $N>100$ and $q<4$, say.
As in the proof of Corollary~\ref{cor:3.30} we may restrict $t$ in \eqref{eq:prop:4.13:conclusion} to the rationals, and by monotone convergence it suffices to consider a finite subset $T$ of the rationals as long as the bounds are independent of this set.

Let $B_{T}$ be the best constant for which the restricted version of \eqref{eq:prop:4.13:conclusion} on $t\in T$ holds.
It is finite because we can estimate the $\hV^{q}$ norm by the $\ell^{2}$ norm, thereby bounding the left-hand side of \eqref{eq:prop:4.13:conclusion} by
\[
\sum_{l\in G} \| \|f_{l}*\cm\|_{\ell^{2}_{t\in T}} \|_{L^{2}}
\leq \sum_{l\in G} \| \|f_{l}*\cm\|_{L^{2}} \|_{\ell^{2}_{t\in T}}
\leq C \sum_{l\in G} |T|^{1/2} \|f_{l}\|_{L^{2}}
\leq C N^{1/2}|T|^{1/2} \| \|f_{l}\|_{L^{2}} \|_{\ell^{2}_{l\in G}}
\]
using the Minkowski, the Young convolution, and the Hölder inequality.
We now use Bourgain's averaging trick.
Let $R_{u}f(x) = f(x+u)$.
By the frequency support assumption on $f_{l}$ and the Bernstein inequality we have $\|f_{l}-R_{u}f_{l}\|_{L^{2}} \lesssim \sum_{i} \tau_{i} |u_{i}| \|f_{l}\|_{L^{2}}$.
It follows that
\begin{multline*}
\| \| \sum_{l\in G} e(\xi_{l}\cdot x) (f_{l}*\cm)(x) \|_{\hV^{q}_{t\in T}} \|_{L^{2}_{x}}\\
\leq
|I|^{-1} \| \| \int_{I} \sum_{l\in G} e(\xi_{l}\cdot x) (R_{u} f_{l}*\cm)(x) \dif u \|_{\hV^{q}_{t\in T}} \|_{L^{2}_{x}}
+
\frac{B_{T}}{2} \| \|f_{l}\|_{L^{2}} \|_{\ell^{2}_{l\in G}},
\end{multline*}
where $I=I_{1}\times\dots\times I_{d}$, $I_{i}=[-c/\tau_{i},c/\tau_{i}]$, and $c$ is a sufficiently small constant depending only on $d$.
Pulling the integral out of the variation norm and estimating the $L^{1}_{u}$ norm by the $L^{2}_{u}$ norm we obtain the bound
\[
C |I|^{-1/2} \| \| \| \sum_{l\in G} e(\xi_{l}\cdot x) (R_{u} f_{l}*\cm)(x) \|_{\hV^{q}_{t\in T}} \|_{L^{2}_{u}(I)} \|_{L^{2}_{x}}
\]
for the first summand.
The fact that the translation operator $R$ commutes with convolution and a change of variable in the double integral in $x$ and $u$ show that this equals
\[
C |I|^{-1/2} \| \| \| \sum_{l\in G} e(\xi_{l}\cdot (x+u)) (f_{l}*\cm)(x) \|_{\hV^{q}_{t\in T}} \|_{L^{2}_{u}(I)} \|_{L^{2}_{x}}.
\]
By Corollary~\ref{cor:NOT:3.2} for any $2<r<q$ this is bounded by
\begin{multline*}
C \left( \frac{q}{q-r} + \frac{2}{r-2} \right) N^{(\frac12 - \frac1r) \frac{q}{q-2}}
\| \| e(\xi_{l}\cdot x) (f_{l}*\cm)(x) \|_{\hV^{r}_{t\in T}(\ell^{2}_{l\in G})} \|_{L^{2}_{x}}\\
=
C \left( \frac{q}{q-r} + \frac{2}{r-2} \right) N^{(\frac12 - \frac1r) \frac{q}{q-2}}
\| \| (f_{l}*\cm)(x) \|_{\hV^{r}_{t\in T}(\ell^{2}_{l\in G})} \|_{L^{2}_{x}}.
\end{multline*}
By Corollary~\ref{cor:3.30} this is bounded by
\[
C \left( \frac{q}{q-r} + \frac{2}{r-2} \right) N^{(\frac12 - \frac1r) \frac{q}{q-2}} \frac{r}{r-2}
\| \| f_{l} \|_{L^{2}} \|_{\ell^{2}_{l\in G}}.
\]
Choosing $r$ such that $r-2 = (q-2)(\log N)^{-1}$ this gives the bound
\[
C \left( \frac{q \log N}{q-2} \right)^{2} \| \| f_{l} \|_{L^{2}} \|_{\ell^{2}_{l\in G}}.
\]
Hence we have obtained
\[
B_{T} \leq C \left( \frac{q \log N}{q-2} \right)^{2} + \frac{B_{T}}{2},
\]
and the conclusion follows.
\end{proof}

\subsection{$L^{p}$ variation estimates}
Let us now describe the setting in which Proposition~\ref{prop:4.13} will be applied.
Let $\cutoff$ be a Schwartz cut-off function such that $\supp\cutofff\subset [-1/50,1/50]^{d}$ and $\cutofff\equiv 1$ on $[-1/100,1/100]^{d}$.
Let
\[
\rats = \{ (a_{1}/q_{1},\dots,a_{d}/q_{d}) \in \Q^{d}\cap [0,1)^{d} \text{ in reduced form with } 2^{s}\leq \lcm(q_{1},\dots,q_{d})< 2^{s+1}\}
\]
be the set of rational points of height $\approx 2^{s}$ in the unit cube.

Let $\LsR[s,t]$ be Schwartz functions on $\R^{d}$ defined by
\begin{equation}
\label{eq:sL-sN}
\LfR[s,t](\vec\alpha) = \sum_{\vec\theta\in \rats} S(\vec\theta) \cmf (\vec\alpha-\vec\theta) \cutofff[10^{s}](\vec\alpha-\vec\theta),
\end{equation}
where $S(\vec\theta)$ are arbitrary constants and $\cutoff[t](\vec x) = t^{-d} \cutoff(\vec x/t)$ denotes the $L^{1}$-dilation, so that $\cutofff[t](\vec\xi) = \cutofff(t\vec\xi)$.
An important observation is that any two distinct members of $\rats$ are separated at least by $2^{-2s-2}$, so the terms of the sum defining $\LfR[s,t]$ are disjointly supported.
Write
\begin{equation}
\label{eq:Smax}
\Smax = \max_{\vec\theta\in\rats} S(\vec\theta).
\end{equation}
In our applications this quantity will decrease with $s$ sufficiently rapidly to offset the relatively fast growth of the size of the set of frequencies $\rats$, thus making the next result useful at least for $p$ not too far from $2$.
\begin{theorem}
\label{thm:multiple-freq-interpolation}
For any $1<p<\infty$, any $q>2$, and any $\delta>0$ we have
\begin{equation}
\label{eq:multiple-freq-interpolation}
\| \| (\LsR[s,t] * f)(x) \|_{\iV^{q}_{t>0}} \|_{L^{p}_{x}(\R^{d})}
\lesssim_{q,p,\delta}
\Smax |\rats|^{2|\frac1p-\frac12|+\delta} \|f\|_{L^{p}(\R^{d})}.
\end{equation}
\end{theorem}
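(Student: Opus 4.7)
The plan is to interpolate the strong $L^2$ estimate coming from Proposition~\ref{prop:4.13} with a crude $L^{p_1}$ estimate obtained by treating each single-frequency piece of $\LsR[s,t]$ separately and summing.

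First I would split $\LsR[s,t] = \sum_{\vec\theta \in \rats} L^{\vec\theta}_{s,t}$, where $L^{\vec\theta}_{s,t}(x) = S(\vec\theta)\, e(\vec\theta \cdot x)\,(\cm * \cutoff[10^{s}])(x)$ is the inverse Fourier transform of a single term in \eqref{eq:sL-sN}. A direct computation reorganizes the full convolution as $(\LsR[s,t] * f)(x) = \sum_{\vec\theta} e(\vec\theta \cdot x)(f_{\vec\theta} * \cm)(x)$ with $\hat f_{\vec\theta}(\beta) = S(\vec\theta)\,\cutofff[10^{s}](\beta)\,\hat f(\beta+\vec\theta)$. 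To apply Proposition~\ref{prop:4.13} I would enumerate each coordinate-direction of $\rats$ in sorted order: since any two distinct reduced fractions with denominators at most $2^{s+1}$ differ by at least $2^{-2s-2}$, this yields the coordinatewise separation hypothesis with $\tau_i = 2^{-2s-2}$, and a direct comparison shows $\supp\cutofff[10^s] \subset [-\tau_i/2,\tau_i/2]^d$ for every $s\geq 0$. The near-disjointness of the shifted cutoff supports gives $\sum_{\vec\theta}\|f_{\vec\theta}\|_{L^2}^2 \lesssim \Smax^2 \|f\|_{L^2}^2$, so Proposition~\ref{prop:4.13} produces
\[
\|\|\LsR[s,t]*f\|_{\iV^q_t}\|_{L^2(\R^d)} \lesssim_q (\log|\rats|)^2\, \Smax\, \|f\|_{L^2(\R^d)}.
\]

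For the crude single-frequency bound at any $p_1 \in (1,\infty)$, modulation by $e(\vec\theta\cdot x)$ is an $L^{p_1}$-isometry and $\cutoff[10^s] \in L^1$ with norm independent of $s$, so Corollary~\ref{cor:3.30a} (together with Young's inequality applied at one fixed time $t_0$ to handle the $\sup_t$ piece of the inhomogeneous variation norm) delivers $\|\|L^{\vec\theta}_{s,t}*f\|_{\iV^q_t}\|_{L^{p_1}} \lesssim_{p_1,q} |S(\vec\theta)|\,\|f\|_{L^{p_1}}$. Summing via the triangle inequality in $L^{p_1}(\iV^q)$ yields $\|\|\LsR[s,t]*f\|_{\iV^q_t}\|_{L^{p_1}} \lesssim_{p_1,q} \Smax\,|\rats|\,\|f\|_{L^{p_1}}$.

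Finally, the map $f \mapsto (\LsR[s,t]*f)_{t>0}$ is a linear operator from $L^p(\R^d)$ into the vector-valued space $L^p(\R^d;\iV^q)$ (well defined since $\iV^q$ is a Banach space for $q>1$), so Riesz--Thorin interpolation between the $L^2$ endpoint and the $L^{p_1}$ bound produces a constant of shape $\Smax\,(\log|\rats|)^{2(1-\theta)}|\rats|^{\theta}$ with $\theta \to 2|1/p - 1/2|$ as $p_1 \to 1$ or $p_1 \to \infty$ (chosen according as $p<2$ or $p>2$). Selecting $p_1$ so that $\theta < 2|1/p - 1/2|+\delta/2$ and absorbing the logarithmic factor into $|\rats|^{\delta/2}$ yields the claimed estimate. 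The main technical obstacle I foresee is setting up the multi-index enumeration of $\rats$ correctly so that the coordinatewise separation hypothesis of Proposition~\ref{prop:4.13} is verified, and tracking that all implicit constants are independent of $s$; once these points are in hand, the interpolation itself is routine.
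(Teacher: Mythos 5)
Your proposal is correct and takes essentially the same route as the paper: decompose $\LsR[s,t]*f$ into modulated single-frequency convolutions, use Plancherel plus Proposition~\ref{prop:4.13} for the $L^2$ endpoint, use Corollary~\ref{cor:3.30a} and the triangle inequality for the crude $L^{p_1}$ bound with factor $\Smax|\rats|$, and interpolate. The only difference is that you spell out the enumeration of $\rats$ by $G\subset\Z^{d}$ and the choice of $p_1$ explicitly, which the paper leaves implicit.
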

\begin{proof}
We have
\begin{multline*}
(\LsR[s,t] * f)(x)
=
\sum_{\vec\theta\in\rats} S(\vec\theta) \Finv(\cmf(\cdot-\vec\theta) \cutofff[10^{s}](\cdot-\vec\theta) \hat f)(x)\\
=
\sum_{\vec\theta\in\rats} S(\vec\theta) e(\vec\theta\cdot x) \Finv(\cmf \cutofff[10^{s}] \hat f(\cdot+\vec\theta))(x)\\
=
\sum_{\vec\theta\in\rats} e(\vec\theta\cdot x) (\cm * f_{\vec\theta})(x)
\end{multline*}
with $f_{\vec\theta} = S(\vec\theta) \cutoff[10^{s}] * (e(-\vec\theta\cdot)f)$.
By the Plancherel identity we have
\begin{equation}
\label{eq:f-theta-l2}
\| \|f_{\vec\theta}\|_{L^{2}} \|_{\ell^{2}_{\vec\theta}} \lesssim \Smax \|f\|_{L^{2}},
\end{equation}
whereas by the Young convolution inequality we have
\begin{equation}
\label{eq:f-theta-lp}
\|f_{\vec\theta}\|_{L^{p}} \lesssim |S(\vec\theta)| \|f\|_{L^{p}}
\quad
\text{for any } 1\leq p\leq\infty.
\end{equation}
Using \eqref{eq:f-theta-l2} and Proposition~\ref{prop:4.13} we obtain
\begin{equation}
\label{eq:multiple-freq-good}
\| \| (\LsR[s,t] * f)(x) \|_{\iV^{q}_{t>0}} \|_{L^{2}_{x}}
\lesssim
\Smax \left( \frac{q (\log |\rats|+1)}{q-2} \right)^{2} \|f\|_{L^{2}(\R^{d})}
\lesssim_{q,\delta}
\Smax |\rats|^{\delta} \|f\|_{L^{2}(\R^{d})}
\end{equation}
for any $\delta>0$, where the implied constant does not depend on $s$.
This is the conclusion for $p=2$.

In view of \eqref{eq:multiple-freq-good} and by interpolation it suffices to establish the conclusion with $|\rats|^{2|\frac1p-\frac12|+\delta}$ replaced by $|\rats|$.
This follows from the single-frequency estimate
\[
\| \| \cm * f_{\vec\theta} \|_{\hV^{q}_{t>0}} \|_{L^{p}(\R^{d})}
\lesssim_{q,p}
\| f_{\vec\theta} \|_{L^{p}}
\]
given by Corollary~\ref{cor:3.30a} and the easy estimate at $t=1$.
\end{proof}

\section{Fourier multipliers on $\Z^{d}$}
\label{sec:Z}
\subsection{Transfer from the reals}
Estimates for Fourier multipliers on the real line can be transferred to the integers by a standard averaging argument.
A particularly useful version of that argument, due to Magyar, Stein, and Wainger, shows that the loss in the operator norm is uniformly bounded over all $L^{p}$ spaces and, for operator-valued multipliers, over the Banach spaces in the fibers.
\begin{theorem}[{\cite[Corollary 2.1]{MR1888798}}]
\label{thm:RtoZ}
Let $B_{1},B_{2}$ be finite-dimensional Banach spaces and $m:\R^{d}\to L(B_{1},B_{2})$ a bounded function supported on a cube with side length $1$ containing the origin that acts as a Fourier multiplier from $L^{p}(\R^{d},B_{1})$ to $L^{p}(\R^{d},B_{2})$ for some $1\leq p\leq\infty$.
Let $q$ be a positive integer and
\[
m^{q}_{\mathrm{per}}(\xi) := \sum_{l\in\Z^{d}} m(q\xi-l)
\quad
\text{for } \xi\in (\R/\Z)^{d}.
\]
Then $m^{q}_{\mathrm{per}}$ acts as a Fourier multiplier from $\ell^{p}(\Z^{d},B_{1})$ to $\ell^{p}(\Z^{d},B_{2})$ with norm
\[
\| m^{q}_{\mathrm{per}} \|_{\ell^{p}(\Z^{d},B_{1}) \to \ell^{p}(\Z^{d},B_{2})}
\lesssim_{d}
\| m \|_{L^{p}(\R^{d},B_{1}) \to L^{p}(\R^{d},B_{2})}.
\]
The implied constant does not depend on $p$, $q$, $B_{1}$, and $B_{2}$.
\end{theorem}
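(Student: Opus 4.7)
The plan is to follow the two-step transference argument of Magyar, Stein and Wainger.

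First, a reduction from arbitrary positive $q$ to $q=1$. Setting $M(\eta) := \sum_{l \in \Z^d} m(\eta - l)$, we have $m^q_{\mathrm{per}} = M(q\,\cdot\,)$ on the torus $(\R/\Z)^d$. Split $\Z^d$ into the $q^d$ residue classes modulo $q\Z^d$: for $f \in \ell^p(\Z^d, B_1)$ and $r \in \{0,\dots,q-1\}^d$ put $f_r(n) := f(qn+r)$. Using the identity
\[
\hat f(\xi) = \sum_r e(-r\cdot\xi)\, \widehat{f_r}(q\xi),
\]
multiplication by $m^q_{\mathrm{per}}(\xi) = M(q\xi)$ distributes over the sum, and reading the identity backwards shows
\[
(T_{m^q_{\mathrm{per}}} f)(qn+r) = (T_M f_r)(n).
\]
Thus $T_{m^q_{\mathrm{per}}}$ is isometrically equivalent on $\ell^p$ to the direct sum of $q^d$ copies of $T_M$, so $\|T_{m^q_{\mathrm{per}}}\|_{\ell^p\to\ell^p} = \|T_M\|_{\ell^p\to\ell^p}$, reducing the theorem to the case $q=1$.

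Second, the transference proper. Since $m$ is supported in a unit cube containing the origin, the periodization $M$ coincides with $m$ on that cube (all other terms in the defining sum vanish there). Fix a Schwartz function $\phi$ with $\hat\phi$ compactly supported and identically $1$ on $\supp m$. For finitely supported $f : \Z^d \to B_1$, set
\[
F(x) := \sum_{n \in \Z^d} f(n)\, \phi(x-n),
\]
so that $\hat F(\xi) = \hat\phi(\xi) \sum_n f(n) e(-n\cdot\xi)$. Because $m\hat\phi = m$, the operator $T_m$ applied to $F$ has Fourier transform $m(\xi) \sum_n f(n) e(-n\cdot\xi)$. Evaluating the inverse Fourier transform at integer points and folding the integral by $\Z^d$-periodicity yields $(T_m F)\big|_{\Z^d} = T_M f$. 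Two routine estimates, both with constants depending only on $d$ and $\phi$, close the argument:
\[
\|G|_{\Z^d}\|_{\ell^p(\Z^d, B_2)} \lesssim \|G\|_{L^p(\R^d, B_2)}
\]
(a Plancherel--Polya sampling inequality for band-limited $G$, applied to $G = T_m F$) and
\[
\|F\|_{L^p(\R^d, B_1)} \lesssim \|f\|_{\ell^p(\Z^d, B_1)}
\]
(from the Schwartz decay of $\phi$ and Young's inequality on $\ell^1 \ast \ell^p$).

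The point that most requires care is uniformity of the constants in $p$, $q$, $B_1$ and $B_2$. Uniformity in $q$ is automatic once the first stage is in place, since that reduction is an exact equality of operator norms. Uniformity in $p$ and in the Banach-space parameters is ensured by deriving both auxiliary estimates purely from pointwise bounds on $\phi$ (and on a Schwartz function inverting $\hat\phi$ on $\supp m$), never invoking Plancherel or duality, either of which would risk introducing $p$-dependent constants. The finite-dimensionality of $B_1$ and $B_2$ enters only to justify the measurability and integrability of operator-valued Fourier integrals; no quantitative use of $\dim B_i$ is made.
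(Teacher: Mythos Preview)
The paper does not supply its own proof of this theorem; it is quoted as \cite[Corollary 2.1]{MR1888798} and used as a black box. Your outline is a faithful sketch of the Magyar--Stein--Wainger argument that the citation points to: the reduction from general $q$ to $q=1$ via residue classes is exactly their Lemma~2.2, and the sampling-plus-extension step for $q=1$ is their Proposition~2.1. So there is nothing to compare against in the present paper, and your approach is the intended one.

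One small point worth tightening: in your last paragraph you assert that both auxiliary estimates can be made uniform in $p$ ``purely from pointwise bounds on $\phi$''. This is true, but the usual route through H\"older leaves a factor like $\|\psi\|_{L^{1}}^{1-1/p}\bigl(\sup_{x}\sum_{k}|\psi(k-x)|\bigr)^{1/p}$, which is bounded uniformly in $p$ only because a maximum of two fixed constants dominates it; you should say this explicitly rather than leave it implicit. Otherwise the sketch is correct.
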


This allows us to transfer Theorem~\ref{thm:multiple-freq-interpolation} to the following statement on sequence spaces.
\begin{proposition}
\label{prop:multiple-freq-lp-circle}
Let $1<p<\infty$, $q>2$, and $\delta>0$ be arbitrary.
Then
\begin{equation}
\| \| \Ls[s,t] * f \|_{\iV^{q}_{t>0}} \|_{\ell^{p}(\Z^{d})}
\lesssim_{p,q,\delta}
\Smax |\rats|^{2|\frac1p-\frac12|+\delta} \| f \|_{\ell^{p}(\Z^{d})},
\end{equation}
where $\Lf[s,t]$ is defined by \eqref{eq:sL-sN} as a function on $(\R/\Z)^{d}$.
\end{proposition}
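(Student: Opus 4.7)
The plan is to transfer the Euclidean estimate of Theorem~\ref{thm:multiple-freq-interpolation} to $\Z^d$ via the operator-valued Magyar--Stein--Wainger principle (Theorem~\ref{thm:RtoZ}). Because the target side is a variation norm, I would first encode Theorem~\ref{thm:multiple-freq-interpolation} as a vector-valued multiplier bound. Fix a finite subset $T \subset \R_{>0}$ and let $B$ denote the finite-dimensional Banach space $\C^T$ equipped with the norm $\iV^q$. The family $(\LfR[s,t])_{t \in T}$ defines a single $B$-valued Fourier multiplier
\[
m_T(\xi) := (\LfR[s,t](\xi))_{t \in T}
\]
on $\R^d$, and Theorem~\ref{thm:multiple-freq-interpolation} asserts, uniformly in $T$, that
\[
\| m_T \|_{L^p(\R^d;\C) \to L^p(\R^d;B)} \lesssim_{p,q,\delta} \Smax |\rats|^{2|1/p - 1/2| + \delta}.
\]

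The second step is to apply Theorem~\ref{thm:RtoZ} with the dilation parameter $q = 1$. Its hypothesis requires $m_T$ to be supported inside some cube of side one containing the origin. The bumps comprising $\LfR[s,t]$ are centered at points of $\rats$ and have half-width $1/(50 \cdot 10^{s})$ inherited from $\cutofff[10^s]$; since each coordinate of a point in $\rats$ lies in $[0, 1 - 1/(2^{s+1} - 1)]$, the support of $m_T$ has diameter strictly less than one in each coordinate and hence fits inside a suitable axis-aligned translate of $[0,1]^d$ containing the origin. Because distinct elements of $\rats$ are separated by at least $2^{-2s-2}$, which greatly exceeds the bump width $1/(25 \cdot 10^s)$, the individual bumps are pairwise disjoint, so the periodization $\xi \mapsto \sum_{l \in \Z^d} m_T(\xi - l)$ coincides on the torus with the function $\Lf[s,t]$ defined by \eqref{eq:sL-sN}. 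Theorem~\ref{thm:RtoZ} then supplies
\[
\| \Lf[s,t] \|_{\ell^p(\Z^d;\C) \to \ell^p(\Z^d;B)} \lesssim_d \Smax |\rats|^{2|1/p - 1/2| + \delta},
\]
which is the claimed bound with the variation restricted to $t \in T$. Exhausting a countable dense subset of $(0, \infty)$ by such finite $T$'s and invoking monotone convergence for the variation norm completes the argument.

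The only real obstacle I anticipate is the bookkeeping for the support condition. Two smallness properties of the bumps must be checked: they should be narrow enough not to wrap around under the lattice quotient (so that the periodization reproduces $\Lf[s,t]$), and narrow enough that their union has diameter strictly less than one (so that it fits inside a unit cube containing the origin). Both reduce to the inequality $1/(25 \cdot 10^s) < 1/(2^{s+1} - 1)$, which holds comfortably for every $s \geq 0$ thanks to the choice of $10^s$ (rather than, say, $2^s$) as the dilation in the cutoff $\cutofff[10^s]$.
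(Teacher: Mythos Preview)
Your proposal is correct and follows exactly the paper's approach: restrict to a finite set $T$ of times, view $(\LfR[s,t])_{t\in T}$ as a single $(\C^{T},\iV^{q})$-valued multiplier, and invoke Theorem~\ref{thm:RtoZ} with $B_{1}=\C$, $B_{2}=(\C^{T},\iV^{q}_{t\in T})$, $q=1$. The paper records this in one line and does not spell out the support bookkeeping; your verification that the bumps fit inside a unit cube containing the origin (via $1/(25\cdot 10^{s}) < 1/(2^{s+1}-1)$) is the omitted routine detail.
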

More precisely, note that the convolution on the left-hand side is, pointwise, a continuous function of $t$, so we may restrict attention to rational $t$.
By monotone convergence it suffices to consider a finite set $T$ of $t$'s as long as we obtain an estimate that does not depend on that set.
We apply Theorem~\ref{thm:RtoZ} with $B_{1}=\C$, $B_{2}=(\C^{T},\iV^{q}_{t\in T})$, and $q=1$.

\subsection{Long variation}
In this section we estimate the long variation for convolutions with kernels that admit favorable approximation in terms of
\begin{equation}
\label{eq:LN}
\Ls = \sum_{s\geq 0} \Ls[s,N].
\end{equation}
\begin{theorem}
\label{thm:long-var-lp}
Let $d\geq 1$, $1<p<\infty$, $Z \subset \N$, and suppose
\begin{equation}
\label{eq:K-L}
\sum_{N\in Z} \|\Kf-\Lf\|_{\infty}^{\delta} < \infty
\quad\text{for every }\delta>0
\quad{ and}
\end{equation}
\begin{equation}
\label{eq:S-lp}
\sum_{s\geq 0} |\rats|^{2 |\frac1p-\frac12|+\delta} \Smax < \infty
\quad\text{for some } \delta>0.
\end{equation}
If $p\neq 2$ assume in addition
\begin{equation}
\label{eq:KN-l1}
\sup_{N\in Z} \| \Ks \|_{\ell^{1}} < \infty.
\end{equation}
Then for every $q>2$ we have
\[
\| \|\Ks * f \|_{\iV^{q}_{N\in Z}} \|_{\ell^{p}} \lesssim_{q} \|f\|_{\ell^{p}}.
\]
\end{theorem}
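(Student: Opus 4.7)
The plan is to decompose $\Ks*f = \Ls*f + (\Ks-\Ls)*f$ and treat the two contributions to the variation separately; subadditivity of the $\iV^q$ seminorm and the Minkowski inequality on $\ell^p$ reduce matters to bounding each piece. The approximant $\Ls$ will absorb the main term via Proposition~\ref{prop:multiple-freq-lp-circle}, and the residual will be handled by a crude $\ell^1$-in-$N$ bound combined with Plancherel and interpolation.

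For the main term, note that $\Ls = \sum_{s\geq 0}\Ls[s,N]$ by \eqref{eq:LN}, so subadditivity of $\iV^q$ in $N$ and the triangle inequality on $\ell^p$ yield
\[
\| \|\Ls * f\|_{\iV^q_{N\in Z}} \|_{\ell^p} \leq \sum_{s\geq 0} \| \|\Ls[s,N]*f\|_{\iV^q_{N\in Z}} \|_{\ell^p}.
\]
Restricting the time variable to $Z\subset\N$ never increases the $\iV^q$ norm, so Proposition~\ref{prop:multiple-freq-lp-circle} bounds each summand by $C_{p,q,\delta}\Smax |\rats|^{2|1/p-1/2|+\delta}\|f\|_{\ell^p}$, and \eqref{eq:S-lp} makes the sum in $s$ converge. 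For the error term, I would use the crude pointwise bound $\|(a_N)\|_{\iV^q_{N\in Z}} \leq 3\sum_{N\in Z} |a_N|$ (which holds since $\iV^q\leq \sup+\hV^1$ and both are dominated by the $\ell^1$ norm); applying it to $(\Ks-\Ls)*f$ and exchanging $\ell^p$ with $\sum_N$ via Minkowski reduces the task to summing $\sum_{N\in Z}\|(\Ks-\Ls)*f\|_{\ell^p}$. When $p=2$ this follows from Plancherel $\|(\Ks-\Ls)*f\|_{\ell^2}\leq \|\Kf-\Lf\|_\infty\|f\|_{\ell^2}$ together with \eqref{eq:K-L} at $\delta=1$, and the $\ell^1$ hypothesis \eqref{eq:KN-l1} is not used, consistent with its absence from the $p=2$ case of the statement.

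When $p\neq 2$, pick an auxiliary exponent $p_1$ on the far side of $p$ from $2$, i.e.\ with $|1/p_1-1/2|>|1/p-1/2|$, close enough to $p$ that \eqref{eq:S-lp} still holds with $p_1$ in place of $p$ and some smaller $\delta'>0$. The argument of the previous paragraph then yields a uniform-in-$N$ bound $\|\Ls*f\|_{\ell^{p_1}}\lesssim \|f\|_{\ell^{p_1}}$ (since $\iV^q$ dominates the value at any single time), while $\|\Ks*f\|_{\ell^{p_1}}\lesssim \|f\|_{\ell^{p_1}}$ follows from \eqref{eq:KN-l1} and Young's inequality. Riesz--Thorin interpolation between the Plancherel bound on $\ell^2$ and this uniform $\ell^{p_1}$ bound produces $\|(\Ks-\Ls)*f\|_{\ell^p}\lesssim \|\Kf-\Lf\|_\infty^{1-\theta}\|f\|_{\ell^p}$ with $\theta=(1/p-1/2)/(1/p_1-1/2)\in(0,1)$, and \eqref{eq:K-L} at $\delta=1-\theta$ closes the sum over $N$. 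The only delicate point, and the main obstacle, is arranging these interpolation parameters: $p_1$ must be far enough from $p$ that $1-\theta$ is strictly positive, but close enough to $p$ that the hypothesized summability \eqref{eq:S-lp} transfers to $p_1$ with a positive exponent $\delta'$. Both requirements are simultaneously compatible thanks to the freedom in the parameter $\delta$ of \eqref{eq:S-lp}; everything else in the argument is routine triangle, Minkowski, Plancherel, Young, and interpolation manipulation.
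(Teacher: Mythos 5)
Your proposal is correct and follows essentially the same route as the paper's proof: the same decomposition into the approximant $\Ls$ plus an error, the same use of Proposition~\ref{prop:multiple-freq-lp-circle} together with \eqref{eq:S-lp} for the main term (including the observation that the variation bound holds for some $p_1$ slightly farther from $2$), the same crude $\ell^1$-in-$N$ reduction for the error, and the same Riesz--Thorin interpolation between the Plancherel $\ell^2$ bound and the $\ell^{p_1}$ operator bound furnished by \eqref{eq:KN-l1}; your $\theta$ is the complement of the paper's $\theta$ but the estimate is identical.
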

\begin{proof}
We have
\begin{equation}
\label{eq:long-var-lp:1}
\| \|\Ks * f \|_{\iV^{q}_{N\in Z}} \|_{\ell^{p}}
\lesssim
\sum_{s} \| \|\Ls[s,N] * f \|_{\iV^{q}_{N\in Z}} \|_{\ell^{p}}
+
\| \|(\Ks-\Ls) * f \|_{\ell^{1}_{N\in Z}} \|_{\ell^{p}}.
\end{equation}
The first sum in \eqref{eq:long-var-lp:1} is bounded by
\[
\sum_{s} \Smax |\rats|^{2|\frac1p-\frac12|+\delta} \| f \|_{\ell^{p}} \lesssim \|f\|_{\ell^{p}}
\]
by Proposition~\ref{prop:multiple-freq-lp-circle} and \eqref{eq:S-lp}.
Note that this estimate also holds for some $1<p_{0}<\infty$ which is farther away from $2$ than $p$ and implies in particular
\[
\| \Ls \|_{\ell^{p_{0}}\to \ell^{p_{0}}} \leq C,
\quad N\in Z.
\]
Since for our kernels also
\[
\| \Ks \|_{\ell^{p_{0}}\to \ell^{p_{0}}} \leq \| \Ks \|_{\ell^{1}} \leq C
\]
by \eqref{eq:KN-l1}, we obtain
\[
\| \Ks - \Ls \|_{\ell^{p_{0}}\to\ell^{p_{0}}} \leq C.
\]
Interpolating between $\ell^{p_{0}}$ and $\ell^{2}$ we obtain
\[
\| \Ks - \Ls \|_{\ell^{p}\to\ell^{p}}
\leq
\| \Ks - \Ls \|_{\ell^{p_{0}}\to\ell^{p_{0}}}^{1-\theta} \| \Ks - \Ls \|_{\ell^{2}\to\ell^{2}}^{\theta}
\lesssim
\| \Kf - \Lf \|_{\infty}^{\theta},
\]
where $\theta>0$ is obtained from the condition $1/p=(1-\theta)/p_{0}+\theta/2$.
Interpolation is not needed if $p=2$, and the condition \eqref{eq:KN-l1} is consequently not used in that case.
The last term is summable in $N$ by \eqref{eq:K-L}.
This, and Minkowski's inequality, allows us to estimate the second term in \eqref{eq:long-var-lp:1}.
\end{proof}

\subsection{Short variation}
Since we will be able to handle the long variation on fairly dense subsets of $\N$, namely
\begin{equation}
\label{eq:Z_epsilon}
Z=Z_{\epsilon}=\{\lfloor 2^{k^{\epsilon}} \rfloor\}_{k=1}^{\infty},
\end{equation}
we can afford estimating the short variation in a very simplistic manner.
\begin{lemma}
\label{lem:short-var-lp}
Let $1<p<\infty$, $1<q\leq \infty$, let $Z = \{N_{1},N_{2},\dots\}\subset \N$ be an increasing sequence, and suppose
\begin{equation}
\label{eq:K-l1}
\| \sum_{N\in\Nker_{k}}\|K_{N+1}-\Ks\|_{\ell^{1}} \|_{\ell^{\min(p,q)}_{k}} < \infty,
\end{equation}
where $\Nker_{k} = [N_{k},N_{k+1}]$.
Then
\[
\| \| \Ks*f \|_{\sV^{q}_{N}} \|_{\ell^{p}} \lesssim_{p,q} \|f\|_{\ell^{p}}.
\]
\end{lemma}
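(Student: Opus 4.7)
The plan is to reduce the short variation norm to the crude $\ell^{1}$-variation $\sum_{N\in\Nker_{k}} |K_{N+1}*f(x)-\Ks*f(x)|$ by the monotonicity $\hV^{q} \leq \hV^{1}$ (valid since $q\geq 1$), and then to apply Young's convolution inequality together with Minkowski, choosing the order of integration according to whether $p\geq q$ or $p\leq q$.

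More concretely, fix $x\in\Z^{d}$ and write $b_{k}(x) := \sum_{N\in\Nker_{k}} |K_{N+1}*f(x)-\Ks*f(x)|$. Then pointwise
\[
\|\Ks*f(x)\|_{\hV^{q}_{N\in\Nker_{k}}} \leq \|\Ks*f(x)\|_{\hV^{1}_{N\in\Nker_{k}}} \leq b_{k}(x),
\]
so that $\| \|\Ks*f\|_{\sV^{q}_{N}}\|_{\ell^{p}_{x}} \leq \| \|b_{k}(x)\|_{\ell^{q}_{k}} \|_{\ell^{p}_{x}}$. Now Young's inequality gives $\|b_{k}\|_{\ell^{p}_{x}} \leq \bigl(\sum_{N\in\Nker_{k}}\|K_{N+1}-\Ks\|_{\ell^{1}}\bigr) \|f\|_{\ell^{p}} =: a_{k} \|f\|_{\ell^{p}}$.

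If $p\geq q$, Minkowski's inequality (pushing the $\ell^{q}_{k}$ outside the $\ell^{p}_{x}$) yields
\[
\| \|b_{k}\|_{\ell^{q}_{k}} \|_{\ell^{p}_{x}}
\leq \| \|b_{k}\|_{\ell^{p}_{x}}\|_{\ell^{q}_{k}}
\leq \|a_{k}\|_{\ell^{q}_{k}} \|f\|_{\ell^{p}}
= \|a_{k}\|_{\ell^{\min(p,q)}_{k}} \|f\|_{\ell^{p}}.
\]
If instead $p\leq q$, then first embed $\ell^{q}_{k}\hookrightarrow \ell^{p}_{k}$ and use Fubini (since both outer norms are now $\ell^{p}$):
\[
\| \|b_{k}\|_{\ell^{q}_{k}} \|_{\ell^{p}_{x}}
\leq \| \|b_{k}\|_{\ell^{p}_{k}}\|_{\ell^{p}_{x}}
= \| \|b_{k}\|_{\ell^{p}_{x}}\|_{\ell^{p}_{k}}
\leq \|a_{k}\|_{\ell^{p}_{k}} \|f\|_{\ell^{p}}
= \|a_{k}\|_{\ell^{\min(p,q)}_{k}} \|f\|_{\ell^{p}}.
\]
In either case the hypothesis \eqref{eq:K-l1} bounds the right-hand side, so $\|\|\Ks*f\|_{\sV^{q}_{N}}\|_{\ell^{p}} \lesssim_{p,q} \|f\|_{\ell^{p}}$ as required.

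There is no real obstacle here; the statement is essentially a book-keeping exercise, and the only subtle point is that the exponent $\min(p,q)$ in the assumption is dictated precisely by which direction of Minkowski (respectively, which of the two nestings of $\ell^{p}$ and $\ell^{q}$) is available in each regime. The crude pointwise bound $\hV^{q}\leq \hV^{1}$ is wasteful for large $q$, but the density of $Z=Z_{\epsilon}$ chosen in \eqref{eq:Z_epsilon} makes the telescoping sum of $\|K_{N+1}-\Ks\|_{\ell^{1}}$ small enough that this waste is acceptable.
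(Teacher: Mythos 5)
Your proof is correct and follows essentially the same route as the paper: drop $\hV^q$ to $\hV^1$ by monotonicity, bound $\hV^1$ by the telescoping $\ell^1$ sum of increments, and then use Minkowski together with Young's convolution inequality to pull out $\|f\|_{\ell^p}$. The paper packages your two-case Minkowski step as a WLOG reduction to $q\le p$ (using monotonicity of $\sV^q$ in $q$ and the fact that $\min(p,q)$ is unchanged when $q$ is replaced by $\min(p,q)$), which is the same content in a slightly slicker form.
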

\begin{proof}
In view of the monotonicity of the variation norms it suffices to consider $q\leq p$.
By the definition of the short variation norm, the monotonicity of variation norms, and two applications of the Minkowski inequality we have
\begin{multline}
\label{eq:short-var:1}
\| \| \Ks*f \|_{\sV^{q}_{N}} \|_{\ell^{p}}
=
\| \| \|\Ks*f\|_{\hV^{q}_{N\in\Nker_{j}}} \|_{\ell^{q}_{j}} \|_{\ell^{p}}
\leq
\| \| \|\Ks*f\|_{\hV^{1}_{N\in\Nker_{j}}} \|_{\ell^{q}_{j}} \|_{\ell^{p}}\\
\leq
\| \| \|(K_{N+1}-\Ks)*f\|_{\ell^{1}_{N\in\Nker_{j}}} \|_{\ell^{q}_{j}} \|_{\ell^{p}}
\leq
\| \| \|(K_{N+1}-\Ks)*f\|_{\ell^{p}} \|_{\ell^{1}_{N\in\Nker_{j}}} \|_{\ell^{q}_{j}}.
\end{multline}
By the Young convolution inequality this is bounded by
\[
\| \| \|K_{N+1}-\Ks\|_{\ell^{1}} \|_{\ell^{1}_{N\in\Nker_{j}}} \|_{\ell^{q}_{j}} \|f\|_{\ell^{p}}
\]
By the hypothesis \eqref{eq:K-l1} this is $\lesssim \|f\|_{\ell^{p}}$.
\end{proof}

\begin{corollary}
\label{cor:full-var}
Let $1<p<\infty$ and suppose that for some $Z\subset\N$ the conditions \eqref{eq:K-L}, \eqref{eq:S-lp}, and \eqref{eq:K-l1} hold.
If $p\neq 2$ assume in addition \eqref{eq:KN-l1}.
Then for every $q>2$ we have
\[
\| \|\Ks * f \|_{\hV^{q}_{N}} \|_{\ell^{p}} \lesssim_{p,q} \|f\|_{\ell^{p}}.
\]
\end{corollary}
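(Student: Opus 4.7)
The proof will be essentially a concatenation of the two preceding results, bridged by the long/short variation splitting \eqref{eq:var-long-short}. First I would apply that splitting with the distinguished sequence $Z\subset\N$ furnished by the hypothesis: for each $x$,
\[
\|\Ks*f(x)\|_{\hV^{q}_{N\in\N}} \leq \|\Ks*f(x)\|_{\hV^{q}_{N\in Z}} + 2\|\Ks*f(x)\|_{\sV^{q}},
\]
where the short variation is taken with respect to the intervals $\Nker_{k}=[N_{k},N_{k+1}]$ between consecutive members of $Z$. Taking the $\ell^{p}$ norm in $x$ and invoking the triangle inequality reduces the task to estimating each of the two summands separately on $\ell^{p}$.

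For the long-variation summand I would invoke Theorem~\ref{thm:long-var-lp}, whose hypotheses are exactly what is being assumed here: \eqref{eq:K-L}, \eqref{eq:S-lp}, and, when $p\neq 2$, \eqref{eq:KN-l1}. This gives the bound $\lesssim_{q}\|f\|_{\ell^{p}}$ for the long variation part. For the short-variation summand I would apply Lemma~\ref{lem:short-var-lp}, whose single hypothesis \eqref{eq:K-l1} is also among the assumptions; this yields $\lesssim_{p,q}\|f\|_{\ell^{p}}$ for the short variation part. Finally I would combine the two estimates and note that the inhomogeneous variation is controlled by the homogeneous variation together with a single value, so there is nothing extra to check.

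Since every ingredient is already in place, there is no genuine obstacle: the only thing that requires a moment of care is verifying that the same sequence $Z$ is admissible for both the long and the short estimate, which is immediate since \eqref{eq:K-L}, \eqref{eq:S-lp}, \eqref{eq:KN-l1}, and \eqref{eq:K-l1} are all stated with respect to the same $Z$. The assertion therefore follows in a few lines.
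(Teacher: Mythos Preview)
Your proposal is correct and matches the paper's own proof essentially line for line: apply the pointwise splitting \eqref{eq:var-long-short} with respect to $Z$, then invoke Theorem~\ref{thm:long-var-lp} for the long variation and Lemma~\ref{lem:short-var-lp} for the short variation. Your closing remark about the inhomogeneous variation is unnecessary here since the conclusion is stated for the homogeneous norm $\hV^{q}$, but it does no harm.
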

\begin{proof}
By Theorem~\ref{thm:long-var-lp} and Lemma~\ref{lem:short-var-lp} we have $\ell^{p}$ bounds for $\|\Ks * f \|_{\hV^{q}_{N\in Z}}$ and $\| \Ks * f\|_{\sV^{q}_{N}}$ with respect to $Z$.
The conclusion follows from the pointwise bound \eqref{eq:var-long-short}.
\end{proof}

\section{Primes}
\label{sec:primes}
In this section we recall several estimates from \cite[\textsection 4]{MR950982} and \cite{MR995574}, partially following the exposition in \cite{2013arXiv1311.7572M}, and prove Theorem~\ref{thm:var-primes}.

\subsection{Tools}
We begin with the necessary tools from number theory.
In this section $\Lambda$ denotes the von Mangoldt function, $\mu$ the Möbius function, and $\tot$ the Euler totient function.
For $q\in\N$ let $A_{q}=\{r\in\{1,\dots,q\} : (r,q)=1\}$.
Recall the Ramanujan sum identity \cite[Theorem A.24]{MR1395371}
\begin{equation}
\label{eq:ramanujan-sum}
\sum_{r\in A_{q}} e(ra/q) = \mu(q),
\quad (a,q)=1.
\end{equation}
and the elementary estimate
\begin{equation}
\label{eq:totient-lower-bound}
\tot(n) \gtrsim_{\delta} n^{1-\delta}
\quad \text{ for any }\delta>0
\end{equation}
for the Euler totient function, see \cite[Theorem A.16]{MR1395371}.
\begin{theorem}[{Vinogradov, see \cite[\textsection 25]{MR606931}}]
\label{thm:vinogradov}
Suppose $|\alpha-a/q| \leq 1/q^{2}$, $a\in A_{q}$.
Then
\[
\Big| \sum_{n\leq N}\Lambda(n)e(n\alpha) \Big|
\lesssim
(Nq^{-1/2}+N^{4/5}+N^{1/2}q^{1/2})(\log N)^{4}.
\]
\end{theorem}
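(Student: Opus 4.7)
The plan is to follow the classical Vinogradov method, which I would present cleanly via Vaughan's identity. Vaughan's identity decomposes $\Lambda(n)\mathbf{1}_{n>U}$, for a parameter $U$, as a combination of three bilinear convolutions. After multiplying by $e(n\alpha)$ and summing over $n\leq N$, this reduces $\sum_{n\leq N}\Lambda(n)e(n\alpha)$ (up to a trivial contribution of size $U$) to bounding sums of two shapes: ``Type I'' sums of the form $\sum_{m\leq M} a_m \sum_{n\leq N/m} e(\alpha m n)$ with $|a_m|\lesssim \log N$ or $|a_m|\leq \Lambda(m)$ and $M\leq UV$, and ``Type II'' sums of the form $\sum_{U<m\leq N/V} a_m \sum_{V<n\leq N/m} b_n e(\alpha mn)$ with $|a_m|,|b_n|\lesssim \log N$.

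For Type I sums the inner $n$-sum is a geometric series, bounded by $\min(N/m,\|\alpha m\|^{-1})$. Summing this over $m\leq M$ using the standard Diophantine inequality
\[
\sum_{m\leq M}\min\!\big(N/m,\|\alpha m\|^{-1}\big)\lesssim \bigl(N/q+M+q+Nq/(Mq)\bigr)\log(2qN),
\]
valid whenever $|\alpha-a/q|\leq 1/q^2$ with $(a,q)=1$, gives a clean Type I estimate in terms of $N$, $M$, $q$.

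For Type II sums I apply Cauchy--Schwarz in the outer variable $m$, open the square, and exchange the order of summation to reduce to bounding $\sum_{V<n,n'\leq N/U}\bigl|\sum_{m}e(\alpha m (n-n'))\bigr|$; the inner sum is again a geometric progression, and the same min-sum estimate applies to the outer variables. This yields a Type II bound of the shape $(\log N)^c (N q^{-1/2}+N^{4/5}+N^{1/2}q^{1/2})$ once one chooses the Vaughan parameters $U=V=N^{2/5}$ to balance the Type I and Type II contributions.

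The only genuine work is the bookkeeping of logarithmic factors (summing divisor-function weights over dyadic $M$-ranges produces the final $(\log N)^4$) and the optimization of $U,V$. There is no real conceptual obstacle here: once Vaughan's identity is in place, both types reduce to the elementary geometric-sum / min-sum estimate, and the final bound is obtained by taking the worst term. I would simply cite Vinogradov's book (or a modern presentation of Vaughan's identity) for the detailed computation rather than reproducing it.
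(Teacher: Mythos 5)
The paper does not prove this theorem; it cites it directly to Davenport's \emph{Multiplicative Number Theory} \textsection 25, and the exposition there is precisely the Vaughan-identity argument you outline (Type I/Type II decomposition, the min-sum Diophantine estimate, Cauchy--Schwarz on the Type II sum, and balancing the parameters at $U=V=N^{2/5}$). Your sketch is the standard modern proof and matches the cited source; the only nit is that your displayed min-sum inequality has a spurious term $Nq/(Mq)$, but the correct version gives the stated bound once the Type II contribution is worked out as you describe.
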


\begin{theorem}[{Siegel--Walfisz, see \cite[\textsection 22]{MR606931}}]
\label{lem:siegel-walfisz}
Let
\[
\psi(N;q,r) = \sum_{n\leq N, n\equiv r \mod q} \Lambda(n)
\]
be the (von Mangoldt weighted) counting function for the primes $\equiv r\mod q$.
For every $A>0$ there exists $C(A)>0$ such that
\[
\psi(N;q,r) = \frac{N}{\tot(q)} + O(N \exp(-C(A) \sqrt{\log N}))
\]
for every $q\leq (\log N)^{A}$ and $r\in A_{q}$.
\end{theorem}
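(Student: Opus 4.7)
The plan is to reduce the counting function for primes in an arithmetic progression to a sum over Dirichlet characters modulo $q$, and then control each character sum by means of the explicit formula for Dirichlet L-functions together with a classical zero-free region. This is the standard analytic approach; I will sketch it at the level of the main ingredients.

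First, by orthogonality of Dirichlet characters mod $q$ we write
\[
\psi(N;q,r) = \frac{1}{\tot(q)} \sum_{\chi \bmod q} \bar\chi(r) \psi(N,\chi),
\quad
\psi(N,\chi) := \sum_{n\leq N} \chi(n) \Lambda(n).
\]
The principal character $\chi_0$ contributes $\psi(N,\chi_0) = N + O(N \exp(-c\sqrt{\log N}))$, where the error comes from handling the finitely many primes dividing $q$ and the prime number theorem with classical error term; this gives the main term $N/\tot(q)$. It therefore suffices to show that for each non-principal character $\chi$ mod $q$ with $q\leq (\log N)^A$ one has $\psi(N,\chi) = O(N\exp(-C(A)\sqrt{\log N}))$, uniformly in $\chi$ and $q$.

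Second, for a non-principal character $\chi$ induced by a primitive character, Perron's formula together with a contour shift yields the \emph{explicit formula}
\[
\psi(N,\chi) = - \sum_{\rho} \frac{N^{\rho}}{\rho} + \text{(lower order)},
\]
where $\rho$ ranges over the non-trivial zeros of $L(s,\chi)$ in the critical strip. Bounding the right-hand side reduces to a zero-free region for $L(s,\chi)$. The classical Landau–Page argument gives a region of the form $\Re s > 1 - c/\log(q(|t|+2))$ which is free of zeros, with the possible exception of a single real ``Siegel zero'' $\beta < 1$ of some exceptional real character. Plugging this zero-free region into the truncated explicit formula (truncating the sum over zeros at height $T$ and optimizing) gives a contribution of size $N\exp(-c\sqrt{\log N})$ whenever $q\leq (\log N)^A$, \emph{provided} the Siegel zero is ruled out at a corresponding level.

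The main obstacle is therefore controlling the exceptional zero, and this is handled by Siegel's theorem: for every $\varepsilon>0$ there is $c(\varepsilon)>0$ (ineffective) such that no real zero $\beta$ of any $L(s,\chi)$ satisfies $\beta > 1 - c(\varepsilon) q^{-\varepsilon}$. Choosing $\varepsilon = \varepsilon(A)$ small enough compared to $A$ and $q \leq (\log N)^{A}$, the contribution of any putative Siegel zero is also absorbed into an error of the form $N\exp(-C(A)\sqrt{\log N})$. The ineffectivity of $C(A)$ in the final bound is inherited from Siegel's theorem and is the reason the constant in the statement cannot be made explicit by this method. Combining these estimates over all characters $\chi$ mod $q$, and using the trivial bound $\tot(q)^{-1}\sum_{\chi}|\cdot| \leq \max_{\chi}|\cdot|$, completes the proof.
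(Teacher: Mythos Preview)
Your sketch is the standard analytic proof of the Siegel--Walfisz theorem and is essentially correct at the level of detail you give. Note, however, that the paper does not supply its own proof of this statement: it is quoted as a classical result with a reference to Davenport's \emph{Multiplicative Number Theory}, \S22, where precisely the argument you outline (orthogonality of characters, the explicit formula with a zero-free region, and Siegel's ineffective bound on the exceptional zero) is carried out in full. So there is nothing to compare against; your proposal matches the cited source.
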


\subsection{Approximation of the kernel}
\begin{lemma}
\label{lem:approx-exp-sum-lambda}
Let $A>0$, $0<\epsilon<1$, and $\alpha = a/q + \beta$, where $q\leq ((1-\epsilon)\log N)^{A}$ and $|\beta|<(\log N)^{A}/N$.
Then
\[
\big| \Kf(\alpha) - \frac{\mu(q)}{\tot(q)} \frac1N \sum_{n\leq N} e(n\beta) \big|
\lesssim_{A}
e^{-C(A)(1-\epsilon) \sqrt{\log N}},
\]
where $C(A)$ is the constant from the Siegel--Walfisz theorem.
\end{lemma}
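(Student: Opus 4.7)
The plan is to split the sum defining $\Kf(\alpha)$ by residue classes modulo $q$ and apply Abel summation together with the Siegel--Walfisz theorem in each class.

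Writing $e(n\alpha)=e(na/q)e(n\beta)$ and grouping by $n\mod q$ gives
\[
\Kf(\alpha) = \frac{1}{N}\sum_{r=1}^{q} e(ra/q) \sum_{\substack{n\leq N \\ n\equiv r \mod q}} \Lambda(n) e(n\beta).
\]
For residues $r$ with $(r,q)>1$, the only nonzero terms in the inner sum come from powers of primes dividing $q$, contributing in total $O((\log N)(\log q))=O((\log N)(\log\log N))$; after division by $N$ this sits comfortably inside the target error. We may also reduce $a/q$ to lowest terms and so assume $(a,q)=1$, which only decreases $q$.

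For $r\in A_q$, Abel summation with the counting function $\psi(\cdot;q,r)$ yields
\[
\sum_{\substack{n\leq N \\ n\equiv r\mod q}} \Lambda(n) e(n\beta) = \psi(N;q,r)\,e(N\beta) - 2\pi i\beta\int_1^N \psi(t;q,r)\, e(t\beta)\,\dif t.
\]
Split the integration at $N^{1-\epsilon}$. On $[1,N^{1-\epsilon}]$, use Chebyshev's bound $\psi(t;q,r)\lesssim t$; the resulting contribution to $\Kf(\alpha)$ is $\lesssim |\beta| N^{2(1-\epsilon)}/N\lesssim (\log N)^A N^{-2\epsilon}$, which is far smaller than the claimed error. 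On $[N^{1-\epsilon},N]$ the hypothesis $q\leq((1-\epsilon)\log N)^A$ gives $q\leq(\log t)^A$, so Theorem~\ref{lem:siegel-walfisz} applies and yields $\psi(t;q,r)=t/\tot(q)+O(t\,e^{-C(A)\sqrt{\log t}})$ with $\sqrt{\log t}\geq\sqrt{(1-\epsilon)\log N}$.

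Replacing $\psi(t;q,r)$ by $t/\tot(q)$ in both Abel summation terms and then running Abel summation in reverse (with $[t]=t+O(1)$) reconstructs $\frac{1}{\tot(q)}\sum_{n\leq N}e(n\beta)$ up to an additive error of size $O((1+|\beta|N)/\tot(q))\ll(\log N)^A$, negligible after division by $N$. Summing the resulting expression over $r\in A_q$ and invoking the Ramanujan identity \eqref{eq:ramanujan-sum} collapses the $r$-sum to $\mu(q)$, yielding the asserted main term $\frac{\mu(q)}{\tot(q)}\,\frac{1}{N}\sum_{n\leq N}e(n\beta)$. All remaining errors are polylogarithmic multiples of $e^{-C(A)\sqrt{(1-\epsilon)\log N}}$, which fit inside the stated bound $e^{-C(A)(1-\epsilon)\sqrt{\log N}}$ (after redefining $C(A)$, using $1-\epsilon\leq\sqrt{1-\epsilon}$ and the fact that polylog factors are dominated by any exponential in $\sqrt{\log N}$). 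The main technical point is this bookkeeping of error terms and the verification that the Siegel--Walfisz threshold $\exp(q^{1/A})\leq N^{1-\epsilon}$ is respected throughout, which is precisely what forces the factor $(1-\epsilon)$ in the conclusion.
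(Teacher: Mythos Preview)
Your argument is correct and essentially identical to the paper's proof: both split $\Kf(\alpha)$ by residue classes modulo $q$, discard the non-coprime classes trivially, apply partial summation against $\psi(\cdot;q,r)$, cut at $N^{1-\epsilon}$ so that Siegel--Walfisz is available above the cut, and finish with the Ramanujan sum identity. The only cosmetic difference is that the paper uses the discrete form of partial summation (writing $\Lambda(n)=\psi(n;q,r)-\psi(n-1;q,r)$) while you use the integral form; the error bookkeeping---including the passage from $e^{-C(A)\sqrt{1-\epsilon}\sqrt{\log N}}$ to $e^{-C(A)(1-\epsilon)\sqrt{\log N}}$ absorbing the polylogarithmic factors via the gap $\sqrt{1-\epsilon}>1-\epsilon$---is the same in both.
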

The partial summation argument below is adapted from \cite{2013arXiv1311.7572M}.
\begin{proof}
We write the von Mangoldt function as the increment of the weighted prime counting function
\[
\Lambda(n)
=
\sum_{r=1}^{q} (\psi(n;q,r) - \psi(n-1;q,r)).
\]
The terms with $r\not\in A_{q}$ are non-zero only for those $n$ that are powers of the primes that divide $q$, and there are at most $q \log N$ such $n$'s.
Therefore
\begin{multline}
\label{eq:Kf-psi-inc}
\Kf(\alpha)
=
\frac1N \sum_{n\leq N} \Lambda(n) e(n\alpha)\\
=
\frac1N \sum_{r\in A_{q}} e(ra/q) \sum_{n=M+1}^{N} (\psi(n;q,r)-\psi(n-1;q,r)) e(n\beta)
+ O(\frac{(\log N)^{A+2}}{N}) + O(M/N).
\end{multline}
With $M=\lceil N^{1-\epsilon} \rceil$ both error terms can be absorbed into the error term of the conclusion. 
Now consider the sum over $n$ in the main term.
By partial summation it equals
\begin{equation}
\label{eq:Kf-psi-inc:main}
\psi(N;q,r)e(N\beta)-\psi(M;q,r)e(M\beta) + \sum_{n=M+1}^{N-1} \psi(n;q,r)(e(n\beta)-e((n+1)\beta)).
\end{equation}
We use Theorem~\ref{lem:siegel-walfisz} to split this into a main term and the error term.
The main term equals
\[
\frac{N}{\tot(q)} e(N\beta) - \frac{M}{\tot(q)}e(M\beta)
+ \sum_{n=M+1}^{N-1} \frac{n}{\tot(q)}(e(n\beta)-e((n+1)\beta))
=
\frac{1}{\tot(q)} \sum_{n=M}^{N} e(n\beta)
\]
by partial summation.
Summing up the contributions of these terms to \eqref{eq:Kf-psi-inc} we obtain
\[
\frac1N \sum_{r\in A_{q}} e(ra/q) \frac1{\tot(q)} \sum_{n=M}^{N} e(n\beta)
=
\frac{\mu(q)}{\tot(q)} \frac1N \sum_{n\leq N} e(n\beta)
+ O(M/N)
\]
by the Ramanujan sum identity \eqref{eq:ramanujan-sum}.
It remains to estimate the error term produced by application of Theorem~\ref{lem:siegel-walfisz} to \eqref{eq:Kf-psi-inc:main}.
It equals
\begin{multline*}
O(Ne^{-C(A)\sqrt{\log N}})+O(Me^{-C(A)\sqrt{\log M}})
+ \sum_{n=M+1}^{N-1} O(ne^{-C(A)\sqrt{\log n}})|e(n\beta)-e((n+1)\beta)|\\
=
O(Ne^{-C(A)\sqrt{\log M}})
+ |\beta| \sum_{n=M+1}^{N-1} O(N e^{-C(A)\sqrt{\log M}})\\
=
O(Ne^{-C(A)\sqrt{\log M}})
+ \frac{(\log N)^{A}}{N} O(N^{2} e^{-C(A)\sqrt{\log M}})\\
=
O((\log N)^{A}N e^{-C(A)\sqrt{1-\epsilon}\sqrt{\log N}}).
\end{multline*}
The contribution of this term to \eqref{eq:Kf-psi-inc} can therefore be estimated by
\[
q (\log N)^{A} e^{-C(A)\sqrt{1-\epsilon}\sqrt{\log N}}
\lesssim
e^{-C(A)(1-\epsilon)\sqrt{\log N}}.
\qedhere
\]
\end{proof}

\begin{lemma}[{\cite[(6)]{MR995574}}]
\label{lem:K-L:primes}
Let $\Ks$ be given by \eqref{eq:KN:primes} and let $\Ls$ be given by \eqref{eq:LN} with $d=1$ and
\begin{equation}
\label{eq:S-prime}
S(\theta)=\frac{\mu(q)}{\tot(q)}
\quad\text{for } \theta=a/q \text{ in reduced form}.
\end{equation}
Then for every $B>0$ we have
\[
\|\Kf - \Lf\|_{\infty} \lesssim_{B} (\log N)^{-B}.
\]
\end{lemma}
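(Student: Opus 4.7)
The plan is to carry out the classical circle-method comparison of $\Kf$ with $\Lf$ pointwise on the torus. Given $\alpha\in[0,1)$ I would invoke Dirichlet's approximation theorem with denominator bound $Q:=N(\log N)^{-5B}$ to produce $a,q$ with $(a,q)=1$, $1\leq q\leq Q$, and $\beta:=\alpha-a/q$ satisfying $|\beta|\leq 1/(qQ)$, then split into major arcs $\{q\leq(\log N)^A\}$ and minor arcs $\{q>(\log N)^A\}$ with $A=A(B)$ to be chosen sufficiently large.

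On the major arcs, Lemma~\ref{lem:approx-exp-sum-lambda} gives
\[
\Kf(\alpha)=\frac{\mu(q)}{\tot(q)}\cmf[N](\beta)+O(e^{-c\sqrt{\log N}}),
\]
where the step from the Riemann sum $\frac{1}{N}\sum_{n\leq N}e(n\beta)$ to the integral $\cmf[N](\beta)$ costs only $O(1/N)$. On the $\Lf$ side the scale $s_0:=\lfloor\log_2 q\rfloor$ is the unique one containing $a/q\in\rats[s_0]$, and since $|\beta|\ll 1/(100\cdot10^{s_0})$ the cutoff factor $\cutofff[10^{s_0}](\beta)$ equals $1$, making $\Lf[s_0,N](\alpha)$ match the main term on the nose. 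All other scales $s\neq s_0$ can only receive contributions from rationals $\theta'=a'/q'\neq a/q$, which by rationality satisfy $|\alpha-\theta'|\geq 1/(2qq')$; combining this with the decay $|\cmf[N](\zeta)|\leq(\pi N|\zeta|)^{-1}$, the totient bound \eqref{eq:totient-lower-bound}, and summation of the resulting geometric series in $s$, yields a combined off-diagonal contribution $\lesssim(q/N)^{1-\delta}$, negligible compared to $(\log N)^{-B}$.

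On the minor arcs I would bound the two sides independently. Vinogradov's Theorem~\ref{thm:vinogradov} produces
\[
|\Kf(\alpha)|\lesssim(q^{-1/2}+N^{-1/5}+(q/N)^{1/2})(\log N)^{4}\lesssim(\log N)^{-B}
\]
for $A$ and the exponent in $Q$ chosen large enough relative to $B$. For $\Lf(\alpha)$ the sole surviving ``diagonal'' term at $s_0=\lfloor\log_2 q\rfloor$ satisfies $|S(a/q)|\leq 1/\tot(q)\lesssim q^{-1+\delta}\lesssim(\log N)^{-A(1-\delta)}$, and the off-diagonal scales contribute $\lesssim(q/N)^{1-\delta}\leq(\log N)^{-5B(1-\delta)}$ by exactly the same geometric argument as in the major-arc case.

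The principal technical obstacle is the multiscale bookkeeping for $\Lf=\sum_{s\geq 0}\Lf[s,N]$: at a fixed $\alpha$ one has to identify which pairs $(s,\theta)\in\N\times\rats[s]$ place $\theta$ within the cutoff window $1/(50\cdot10^s)$ of $\alpha$, and simultaneously track the $\cmf[N]$-decay at the corresponding separation and the $q'^{-1+\delta}$-size of $S(\theta)$. The observation that makes the estimate go through cleanly is that the Dirichlet approximant $a/q$ already controls all such near-rationals: any $\theta'\neq a/q$ carries a rationality gap $\geq 1/(2qq')$, which, combined with the decay of $\cmf[N]$ and summed in $s$, collapses the off-diagonal sum into a single geometric series in the parameter $q/N$.
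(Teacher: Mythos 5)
Your proposal is correct and follows essentially the same route as the paper: Dirichlet approximation at denominator scale $Q = N(\log N)^{-O_B(1)}$, a major/minor arc split at $q \approx (\log N)^{A}$, Lemma~\ref{lem:approx-exp-sum-lambda} together with the disjoint supports of the cutoffs on major arcs, Vinogradov's theorem on minor arcs, and the combination of the rational separation, the decay of $\cmf[N]$, and the totient lower bound \eqref{eq:totient-lower-bound} to dispatch the off-diagonal scales.

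One small point to tighten: the separation estimate $|\alpha-\theta'|\geq 1/(2qq')$ is valid only in the range $q' \leq Q/2$ (you need $1/(qQ)\leq 1/(2qq')$); for scales $s$ with $2^s \gtrsim Q$ one must fall back to the crude bound $|\Lf[s,N](\alpha)|\leq \Smax\lesssim 2^{-s(1-\delta)}$, whose sum is $\lesssim Q^{-(1-\delta)}$, negligible. The paper sidesteps this bookkeeping by splitting the off-diagonal sum at the much smaller threshold $2^s\approx(\log N)^{A/2}$, using the separation-plus-decay bound for small $s$ and $\Smax$ alone for larger $s$, which yields the somewhat cruder but entirely sufficient $(\log N)^{-A/2+\delta}$ instead of your sharper $(q/N)^{1-\delta}$. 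Also, the Riemann-sum-to-integral error is $O(|\beta|)=O((\log N)^{A}/N)$ rather than $O(1/N)$, though this is harmless. These are cosmetic; the argument is sound.
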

\begin{proof}
Let $A$ be sufficiently large ($A=3B+8$ will do) and assume without loss of generality that $N$ is sufficiently large depending on $A$.
Let $\alpha\in [0,1]$.
By Dirichlet's approximation theorem there exists a reduced fraction $a/q$ such that
\begin{equation}
\label{eq:K-L:primes:Dirichlet}
\left| \alpha - \frac{a}{q} \right| \leq \frac1{qQ}
\quad\text{with}\quad q\leq Q = N(\log N)^{-A}.
\end{equation}
Let $s_{0}$ be such that $a/q \in \rats[s_{0}]$ and let $\beta = \alpha - a/q$.
For each $s\neq s_{0}$ there is at most one $a_{s}/q_{s}\in\rats$ that contributes to the sum defining $\Lf[s,N](\alpha)$, and
\begin{equation}
\label{eq:Ls-neq-s0}
\sum_{s\neq s_{0}} |\Lf[s,N](\alpha)|
\leq
\sum_{s\neq s_{0}} \Smax \cmf[N](\alpha-a_{s}/q_{s})
\lesssim
\sum_{s\neq s_{0}} \Smax (1+N|\alpha-a_{s}/q_{s}|)^{-1}.
\end{equation}
We have
\begin{equation}
\label{eq:alpha-as-qs}
\left| \alpha - \frac{a_{s}}{q_{s}} \right|
\geq
\left|\frac{a_{s}}{q_{s}}-\frac{a}{q}\right| - |\beta|
\geq
\frac{1}{qq_{s}} - \frac{1}{qQ}.
\end{equation}
The first term dominates if $2^{s} < Q/4$ and in particular if $2^{s} \lesssim_{A} (\log N)^{A/2}$.
Thus we may estimate \eqref{eq:Ls-neq-s0} by
\[
\sum_{s : 2^{s} \lesssim_{A} (\log N)^{A/2}} N^{-1} q \cdot 2^{s} + \sum_{s : 2^{s} \gtrsim_{A} (\log N)^{A/2}} \Smax.
\]
Using \eqref{eq:K-L:primes:Dirichlet} in the first term and \eqref{eq:totient-lower-bound} in the second term we see that this is $\lesssim (\log N)^{-A/2+\delta}$.
Thus it remains to show
\[
| \Lf[s_{0},N](\alpha) - \Kf(\alpha) | \lesssim_{B} (\log N)^{-B}.
\]

\paragraph{Major arcs.}
Suppose that $q\leq (\frac12 \log N)^{A}$.
The estimate \eqref{eq:alpha-as-qs} shows that $|\alpha-\theta|>4^{-s_{0}}/2$ for all $\theta\in\rats[s_{0}]\setminus\{a/q\}$ (provided that $N$ is large enough), so the corresponding terms in $\Lf[s_{0},N]$ vanish at $\alpha$, and we obtain
\[
\Lf[s_{0},N](\alpha) = S(a/q) \cmf[N](\beta) \cutofff[10^{s_{0}}](\beta).
\]
Further,
\[
|\cutofff[10^{s_{0}}](\beta)-1|
\lesssim 10^{s_{0}}|\beta|
\lesssim q^{4} \frac{1}{qQ}
\lesssim \frac{(\log N)^{4A}}{N}. 
\]
Finally,
\[
\Big| \cmf[N](\beta) - \frac1N \sum_{n\leq N}e(n\beta) \Big|
\lesssim
|\beta|
\leq
\frac{(\log N)^{A}}{N}.
\]
Combining these estimates with Lemma~\ref{lem:approx-exp-sum-lambda} we obtain the desired bound at $\alpha$.

\paragraph{Minor arcs.}
Suppose now $q>(\frac12 \log N)^{A}$.
Then
\[
|\Lf[s_{0},N](\alpha)| \leq \Smax[s_{0}] \lesssim q^{-1+\delta} \lesssim (\log N)^{(-1+\delta)A}.
\]
On the other hand, by Theorem~\ref{thm:vinogradov} we have
\begin{multline*}
|\Kf(\alpha)|
\lesssim
(q^{-1/2}+N^{-1/5}+N^{-1/2}q^{1/2})(\log N)^{4}\\
\lesssim
((\log N)^{-A/2}+N^{-1/5})(\log N)^{4}
\lesssim
(\log N)^{-A/2+4}.
\qedhere
\end{multline*}
\end{proof}

\begin{proof}[Proof of Theorem~\ref{thm:var-primes} for $\big| \frac1p - \frac12 \big| < \frac{1}{4}$]
It suffices to verify the conditions of Corollary~\ref{cor:full-var} with $d=1$.
We consider $Z=Z_{\epsilon}$ as in \eqref{eq:Z_epsilon} with a sufficiently small $\epsilon$ to be chosen shortly.
We have
\[
\|K_{N+1}-\Ks\|_{\ell^{1}} \leq \frac{1+\log(N+1)}{N+1}.
\]
For $\Nker_{k} = [N_{k},N_{k+1}]$ we also have
\[
|\Nker_{k}| \lesssim 2^{k^{\epsilon}}(2^{(k+1)^{\epsilon}-k^{\epsilon}}-1) \lesssim 2^{k^{\epsilon}} k^{-1+\epsilon}
\]
This gives the bound
\[
\| 2^{k^{\epsilon}} k^{-1+\epsilon} (1+\log((2^{(k+1)^{\epsilon}})+1))/2^{k^{\epsilon}} \|_{\ell^{\min(p,q)}_{k}}
\lesssim
\| k^{-1+2\epsilon} \|_{\ell^{\min(p,q)}_{k}}
\]
for \eqref{eq:K-l1}.
This is finite provided $(1-2\epsilon) \min(p,q) > 1$.

The condition \eqref{eq:KN-l1} is immediate.

With the choice \eqref{eq:S-prime} we have
\[
\Smax \lesssim 2^{-s(1-\delta)}
\]
for any $\delta>0$ by \eqref{eq:totient-lower-bound}.
Since $|\rats| \leq 4^{s}$, this gives \eqref{eq:S-lp} whenever $\big| \frac1p - \frac12 \big| < \frac{1}{4}$.
Finally, \eqref{eq:K-L} follows from Lemma~\ref{lem:K-L:primes}.
\end{proof}

\subsection{A multi-frequency estimate for $1<p<\infty$}
In view of the results in \cite{MR2653686} it appears plausible that the exponent $2\big| \frac1p - \frac12 \big| + \delta$ in Proposition~\ref{prop:4.13} can be improved to $\big| \frac1p - \frac12 \big|+\delta$.
If this is indeed the case, then the above proof immediately gives Theorem~\ref{thm:var-primes} for $1<p<\infty$.

As an aside we note that the required improvement of Proposition~\ref{prop:4.13} can be obtained for $p>2$ by proving a version of Corollary~\ref{cor:3.30a} for Hilbert space-valued functions, which seems to be possible using the methods in \cite{MR2434308}, and applying it together with Rubio de Francia's Littlewood--Paley inequality for arbitrary intervals \cite{MR850681} to obtain the necessary endpoint estimates for $p$ near $\infty$.

However, this approach does not extend to $p<2$, and, in any case, a much simpler argument due to Wierdl \cite{MR995574} works for our purposes.
The main additional ingredient is the Magyar--Stein--Wainger result on periodic multipliers (Theorem~\ref{thm:RtoZ}), which can be used to close a gap on p.\ 331 in \cite{MR995574}: there, the proof of the estimate for (**) gives $q^{p}$ instead of $q$.
(That gap has been already closed in \cite{2013arXiv1311.7572M} using what amounts to a special case of Theorem~\ref{thm:RtoZ}.
We note that this gap does not affect the validity of the results in \cite{MR995574} since the discrepancy between $q$ and $q^{p}$ can be absorbed into the estimates that follow for $p$ near $1$ and interpolation with the easy endpoint at $p=\infty$ allows one to handle large values of $p$.
However, the situation for variational estimates is different due to lack of such an easy endpoint, and the full power of Theorem~\ref{thm:RtoZ} is useful here.)
\begin{lemma}
\label{lem:var-all-q}
For $q\in\N$ and $Q\in\R_{>0}$ let
\[
\Lf[q,Q,t](\vec\alpha)
:=
\sum_{a_{1},\dots,a_{d}=1}^{q} \cmf(\vec\alpha-\frac aq) \cutofff[Q](\vec\alpha-\frac aq).
\]
Suppose $q\leq 25 Q$, $r>2$, $1<p<\infty$.
Then
\[
\| \| \Ls[q,Q,t] * f \|_{\iV^{r}_{t>0}} \|_{\ell^{p}(\Z^{d})}
\lesssim_{p,r}
\| f \|_{\ell^{p}(\Z^{d})}.
\]
\end{lemma}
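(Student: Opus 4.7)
The plan is to realize $\Lf[q,Q,t]$ on $(\R/\Z)^d$ as the Magyar--Stein--Wainger periodization of a compactly supported multiplier on $\R^d$, and then invoke Theorem~\ref{thm:RtoZ} in its vector-valued form to reduce the claim to the continuous variational estimate from Corollary~\ref{cor:3.30a}.

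Let $m_t(\xi) := \cmf(\xi)\cutofff[Q](\xi)$ on $\R^d$; this is supported in $[-1/(50Q),1/(50Q)]^d$. The hypothesis $q\leq 25Q$ ensures $2/(50Q)\leq 1/q$, so the translates $m_t(\,\cdot\,-a/q)$ for distinct $a\in\{1,\dots,q\}^d$ are disjointly supported modulo integer shifts. Setting $m'_t(\eta):=m_t(\eta/q)$, which is supported in $[-q/(50Q),q/(50Q)]^d\subset[-1/2,1/2]^d$, an unfolding of the sum gives
\[
\Lf[q,Q,t](\vec\alpha) \;=\; \sum_{l\in\Z^d} m'_t(q\vec\alpha-l)\qquad\text{on } (\R/\Z)^d,
\]
i.e.\ $\Lf[q,Q,t]=(m'_t)^q_{\mathrm{per}}$ in the notation of Theorem~\ref{thm:RtoZ}.

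By monotone convergence it suffices to restrict $t$ to a finite subset $T$ of positive rationals with bounds independent of $T$, so that the target $B:=(\C^T,\iV^r_{t\in T})$ is finite-dimensional. Applying Theorem~\ref{thm:RtoZ} with $B_1=\C$ and $B_2=B$, and using that vector-valued $L^p$-multiplier norms are invariant under the dilation $\xi\mapsto\xi/q$, one obtains
\[
\| \|\Ls[q,Q,t]*f\|_{\iV^r_{t\in T}} \|_{\ell^p(\Z^d)} \;\lesssim_d\; \| \|\cm_t*\cutoff[Q]*f\|_{\iV^r_{t\in T}} \|_{L^p(\R^d)}.
\]
For the homogeneous part of the right-hand side, apply Corollary~\ref{cor:3.30a} to $g:=\cutoff[Q]*f$, which satisfies $\|g\|_{L^p}\lesssim\|f\|_{L^p}$ since $\cutoff[Q]$ is $L^1$-normalized; this bounds $\|\|\cm_t*g\|_{\hV^r_t}\|_{L^p}$ by a constant times $\|f\|_{L^p}$. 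The $\sup_t$ part of the inhomogeneous variation norm is handled via the pointwise estimate $\sup_t|c_t|\leq |c_{t_0}|+\|c\|_{\hV^r}$ at any fixed $t_0>0$: the value $\cm_{t_0}*\cutoff[Q]*f$ is controlled in $L^p$ by convolution with the $L^1$ kernel $\cm_{t_0}*\cutoff[Q]$.

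The step requiring the most care is the periodization identity together with the vector-valued dilation argument: one must verify that the $q^d$ translates appearing in $\Lf[q,Q,t]$ correspond exactly to the Magyar--Stein--Wainger lift of $m'_t$, which is precisely where the hypothesis $q\leq 25Q$ is used, and that the implied constant in Theorem~\ref{thm:RtoZ} is uniform in the Banach space $B$, so that passage to the limit in $T$ is legitimate. Both points are immediate from the explicit statement of Theorem~\ref{thm:RtoZ}; the rest is a clean combination of transference and the continuous variational estimate.
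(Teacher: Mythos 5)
Your proposal is correct and follows exactly the paper's approach: the paper's own proof (one sentence) cites precisely the combination of Theorem~\ref{thm:RtoZ} (with $B_1=\C$, $B_2=(\C^T,\iV^r_{t\in T})$, periodization parameter $q$) and the single-frequency variational estimate from Corollary~\ref{cor:3.30a}, and your argument is simply a careful unwinding of that one sentence — the rescaling $m'_t(\eta)=m_t(\eta/q)$, the verification that $\Lf[q,Q,t]=(m'_t)^q_{\mathrm{per}}$ with disjointly supported shifts under the hypothesis $q\leq 25Q$, dilation-invariance of the vector-valued multiplier norm, and the reduction of the $\iV^r$ bound to the $\hV^r$ bound plus a fixed-time $L^p$ estimate via the $L^1$-normalized kernel $\cutoff[Q]$.
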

\begin{proof}
As usually, it suffices to consider a finite set $T$ of $t$'s as long as the bound does not depend on $T$.
The estimate follows from the single-frequency estimate in Corollary~\ref{cor:3.30a} using Theorem~\ref{thm:RtoZ} with $B_{1}=\C$, $B_{2}=(\C^{T},\iV^{r}_{t\in T})$, $q=q$, and $p=p$.
\end{proof}

\begin{corollary}
\label{cor:var-A-q}
Let
\[
\Lf[A_{q}^{d},Q,t](\vec\alpha)
:=
\sum_{a\in A_{q}^{d}} \cmf(\vec\alpha-\frac aq) \cutofff[Q](\vec\alpha-\frac aq),
\]
where $A_{q}^{d}=\{(a_{1},\dots,a_{d}) : (a_{1},\dots,a_{d},q)=1\}$.
Suppose $q\leq 25 Q$, $q\in\N$, $Q\in\R_{>0}$, $r>2$, $1<p<\infty$.
Then
\[
\| \| \Ls[q,Q,t] * f \|_{\iV^{r}_{t>0}} \|_{\ell^{p}(\Z^{d})}
\lesssim_{p,r,\epsilon}
q^{\epsilon} \| f \|_{\ell^{p}(\Z^{d})}
\]
for any $\epsilon>0$.
\end{corollary}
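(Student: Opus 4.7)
The approach is to write the kernel $\Ls[A_q^d,Q,t]$ as a Möbius-weighted combination of the kernels $\Ls[q',Q,t]$ already treated in Lemma~\ref{lem:var-all-q}, with $q'$ running over divisors of $q$, and then apply that lemma termwise.

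First I would establish the arithmetic identity
\[
\Ls[q,Q,t] = \sum_{q' \mid q} \Ls[A_{q'}^d,Q,t].
\]
This follows from the bijection $\{1,\dots,q\}^d \leftrightarrow \bigsqcup_{q' \mid q} A_{q'}^d$ that sends a tuple $a$ to $(q',b)$ where $g = \gcd(a_1,\dots,a_d,q)$, $q' = q/g$, and $b = a/g$. Equivalently, every fraction $a/q \in (\R/\Z)^d$ has a unique representation in lowest terms $b/q'$ with $q' \mid q$ and $b \in A_{q'}^d$, and both $\cmf(\vec\alpha - a/q)$ and $\cutofff[Q](\vec\alpha - a/q)$ depend only on this fraction. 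Möbius inversion on the divisor poset of $q$ then yields
\[
\Ls[A_q^d,Q,t] = \sum_{q' \mid q} \mu(q/q')\, \Ls[q',Q,t].
\]

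Second, I would take the combined $\iV^r$--$\ell^p$ norm of the convolution with $f$ and apply the triangle inequality. Since $q' \mid q$ implies $q' \leq q \leq 25Q$, the scale hypothesis of Lemma~\ref{lem:var-all-q} holds for every summand, each of which thus contributes $\lesssim_{p,r} \|f\|_{\ell^{p}(\Z^d)}$ uniformly in $q'$. The number of nonzero summands is at most $\tau(q)$, the divisor-counting function of $q$, so the standard estimate $\tau(q) \lesssim_{\epsilon} q^{\epsilon}$ gives the conclusion.

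There is no substantial obstacle; the argument is essentially an algebraic unfolding of Lemma~\ref{lem:var-all-q}. The only points that merit care are verifying the divisor bijection (and hence the correct Möbius formula) and observing that the scale condition $q \leq 25Q$ is automatically inherited by every divisor of $q$.
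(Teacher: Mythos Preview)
Your proposal is correct and follows essentially the same route as the paper: establish $\Ls[q,Q,t] = \sum_{q'\mid q} \Ls[A_{q'}^d,Q,t]$, apply M\"obius inversion to express $\Ls[A_q^d,Q,t]$ as an alternating sum of $\Ls[q',Q,t]$ over divisors $q'\mid q$, invoke Lemma~\ref{lem:var-all-q} termwise, and conclude via the divisor bound $\tau(q)\lesssim_{\epsilon} q^{\epsilon}$. Your justification of the underlying identity via the lowest-terms bijection is in fact more explicit than the paper's, which simply asserts the inversion.
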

\begin{proof}
Recall \cite[Theorem 266]{MR2445243} the Möbius inversion formula
\[
g(n) = \sum_{d|n} f(d)
\implies
f(n) = \sum_{d|n} \mu(\frac nd) g(d).
\]
It is applied with $f(n) = \Ls[A_{n}^{d},Q,t]$, then $g(n) = \Ls[n,Q,t]$, so
\[
\Ls[A_{q}^{d},Q,t] = \sum_{d|q} \mu(\frac qd) \Ls[d,Q,t].
\]
For each summand we have the estimate $\lesssim \|f\|_{\ell^{p}} \leq \|f\|_{\ell^{p}}$ by Lemma~\ref{lem:var-all-q} and there are $O(q^{\epsilon})$ summands since $q$ has $O(q^{\epsilon})$ divisors for any $\epsilon>0$ \cite[Theorem 315]{MR2445243}.
\end{proof}

\begin{corollary}
\label{cor:var-rats}
Let $d=1$ and $S(a/q)=\mu(q)/\tot(q)$.
Suppose $r>2$, $1<p<\infty$.
Then
\[
\| \| \Ls[s,t] * f \|_{\iV^{r}_{t\in\R}} \|_{\ell^{p}} \lesssim_{p,r,\epsilon} 2^{s\epsilon} \| f \|_{\ell^{p}}
\]
for any $\epsilon>0$.
\end{corollary}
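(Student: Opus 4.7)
The plan is to decompose $\Lf[s,t]$ by denominator and reduce to Corollary~\ref{cor:var-A-q}. In dimension $d=1$ the set $\rats$ is the disjoint union, over integers $q\in[2^s,2^{s+1})$, of the sets $\{a/q : a\in A_q\}$ of reduced fractions with denominator $q$; since $S(a/q)=\mu(q)/\tot(q)$ depends only on $q$, this gives the identity
\[
\Lf[s,t](\alpha) \;=\; \sum_{q=2^s}^{2^{s+1}-1} \frac{\mu(q)}{\tot(q)}\, \Lf[A_{q},10^{s},t](\alpha),
\]
where the summand on the right is the multiplier considered in Corollary~\ref{cor:var-A-q} specialized to $d=1$.

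For each summand, the separation hypothesis $q\le 25 Q$ of Corollary~\ref{cor:var-A-q} is automatic with $Q=10^{s}$, since $q<2^{s+1}\le 25\cdot 10^{s}$ for every $s\ge 0$. The corollary therefore yields
\[
\|\|\Ls[A_{q},10^{s},t]*f\|_{\iV^{r}_{t>0}}\|_{\ell^{p}} \;\lesssim_{p,r,\epsilon'}\; q^{\epsilon'}\,\|f\|_{\ell^{p}}
\]
for any $\epsilon'>0$. I would then apply Minkowski's inequality in $\iV^{r}$ (legitimate because $r\ge 1$) and in $\ell^{p}$ to the decomposition, getting
\[
\|\|\Ls[s,t]*f\|_{\iV^{r}_{t>0}}\|_{\ell^{p}} \;\lesssim_{p,r,\epsilon'}\; \sum_{q=2^s}^{2^{s+1}-1} \frac{q^{\epsilon'}}{\tot(q)}\,\|f\|_{\ell^{p}}.
\]

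To close, I use the totient lower bound \eqref{eq:totient-lower-bound}, $1/\tot(q)\lesssim_{\delta} q^{-1+\delta}$, together with the fact that $[2^s,2^{s+1})$ contains at most $2^s$ integers. This bounds the right-hand side by $2^s\cdot 2^{s(-1+\delta+\epsilon')}=2^{s(\delta+\epsilon')}\,\|f\|_{\ell^{p}}$, and choosing both $\delta$ and $\epsilon'$ smaller than $\epsilon/2$ gives the claim. The substantive work is carried by Corollary~\ref{cor:var-A-q} (which itself rests on Möbius inversion, Lemma~\ref{lem:var-all-q}, and the Magyar--Stein--Wainger transference Theorem~\ref{thm:RtoZ}); the main point here is simply to check that the Ramanujan coefficient $\mu(q)/\tot(q)$, of size $q^{-1+\delta}$, is small enough to compensate for the $2^s$ denominators appearing in $\rats$, so no genuine obstacle is expected.
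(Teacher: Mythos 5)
Your argument is essentially identical to the paper's: decompose $\Lf[s,t]$ as $\sum_{q=2^{s}}^{2^{s+1}-1}\frac{\mu(q)}{\tot(q)}\Lf[A_{q},10^{s},t]$ (valid since $S(a/q)$ depends only on the denominator), apply Corollary~\ref{cor:var-A-q} with $d=1$ to each term, and close with the totient lower bound \eqref{eq:totient-lower-bound} and the fact that there are $\lesssim 2^{s}$ denominators in the range. Your verification that $q\le 25Q$ is a sensible addition, and the bookkeeping with $\delta$ and $\epsilon'$ matches the paper's $(2^{s})^{2\epsilon}$ up to relabeling.
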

\begin{proof}
We have
\[
\Ls[s,t] = \sum_{q=2^{s}}^{2^{s+1}-1} \frac{\mu(q)}{\tot(q)} \Ls[A_{q},10^{s},t],
\]
so
\[
\| \| \Ls[s,t] * f \|_{\iV^{r}_{t\in\R}} \|_{\ell^{p}}
\leq
\sum_{q=2^{s}}^{2^{s+1}-1} \frac{1}{\tot(q)} \| \| \Ls[A_{q},10^{s},t] * f \|_{\iV^{r}_{t\in\R}} \|_{\ell^{p}}.
\]
By Corollary~\ref{cor:var-A-q} with $d=1$ and \eqref{eq:totient-lower-bound} this is bounded by
\[
\sum_{q=2^{s}}^{2^{s+1}-1} \frac{1}{q^{1-\epsilon}} q^{\epsilon} \| f \|_{\ell^{p}}
\lesssim
(2^{s})^{2\epsilon} \| f \|_{\ell^{p}}
\]
for any $\epsilon>0$.
\end{proof}

\begin{proof}[Proof of Theorem~\ref{thm:var-primes} for $1<p<\infty$]
An inspection of \textsection\ref{sec:Z} reveals that it suffices to prove a bound for
\[
\| \| \Ls[s,t] * f \|_{\iV^{r}_{t\in\R}} \|_{\ell^{p}}
\]
that is summable is $s$.
This is given by interpolation between Corollary~\ref{cor:var-rats} and Proposition~\ref{prop:multiple-freq-lp-circle}.
\end{proof}

\section{Polynomials}
\label{sec:poly}
In order to make the presentation self-contained we summarize here the approximation of the kernel \eqref{eq:KN:poly} in terms of objects introduced in \textsection\ref{sec:Z} following the argument in \cite{MR1019960}.
We then prove Theorem~\ref{thm:var-poly}.

We denote by $\delta>0$ a small quantity that depends only on $d$.
This symbol's value may change between its uses.
Also, the implied constant in the $\lesssim$ notation may depend on $d$; all other dependencies will be noted explicitly.

\subsection{Estimates for trigonometric sums}
Both the minor and the major arc estimates in this section rely on the following estimate for complete exponential sums, which is due to Hua.
\begin{lemma}[{\cite{MR0004259}}]
\label{lem:hua}
Let $\theta_{j}=a_{j}/q_{j}$, $\lcm(q_{1},\dots,q_{d})=N$.
Then for any $\delta>0$ we have
\[
|\Kf(\vec\theta)| \lesssim_{d,\delta} N^{-1/d+\delta}.
\]
\end{lemma}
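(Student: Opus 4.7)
The plan is to reduce Hua's lemma to the classical bound on complete exponential sums. Since $\lcm(q_{1},\dots,q_{d})=N$, I can write each $\theta_{j}=b_{j}/N$ for some integer $b_{j}$, and the condition that the lcm is not strictly smaller translates into a primitivity condition on $(b_{1},\dots,b_{d})$ modulo $N$: no proper divisor of $N$ can serve as a common denominator of all the $\theta_{j}$. Writing out the Fourier transform gives
\[
N\,\Kf(\vec\theta) \;=\; \sum_{n=1}^{N} e\!\left( \frac{P(n)}{N} \right),
\qquad P(n) \;=\; b_{1} n + b_{2} n^{2} + \dots + b_{d} n^{d},
\]
so the question becomes one of bounding a complete exponential sum of length $N$ with a polynomial phase of degree $d$ modulo $N$.

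To estimate this complete sum, I would invoke Weyl's differencing procedure iteratively. Squaring the sum and reindexing by $h=n_{2}-n_{1}$ produces an inner phase $P(n+h)-P(n)$ which is a polynomial of degree $d-1$ in $n$ with leading coefficient proportional to $h\,b_{d}$. Iterating $d-1$ times reduces matters to linear exponential sums, controlled by the elementary bound $|\sum_{n=1}^{N} e(\alpha n)| \leq \min(N,\|\alpha\|^{-1})$. Summing this over the auxiliary variables $h_{1},\dots,h_{d-1}\in[1,N-1]$ eventually yields $|S|^{2^{d-1}} \lesssim_{d,\delta} N^{2^{d-1}-1+\delta}$, whence $|S| \lesssim N^{1-1/d+\delta}$, and dividing by $N$ gives the claim.

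The main obstacle is the combinatorial step that appears after the iterated differencing: one must count $(d-1)$-tuples $(h_{1},\dots,h_{d-1})$ whose product $d!\,b_{d}\,h_{1}\cdots h_{d-1}$ lies in a prescribed residue class modulo $N$, and then sum $\min(N,\|k/N\|^{-1})$ over those classes. The tuple count is handled by the divisor-type bound $\tau_{d-1}(n)\lesssim_{\delta} n^{\delta}$, which is where the $\delta$ in the final exponent comes from. The primitivity condition from $\lcm(q_{1},\dots,q_{d})=N$ is precisely what guarantees that the leading coefficient produced by the differencings is not degenerate modulo $N$, so that the counting step is applicable. The cleanest route in practice is simply to cite Hua's original paper \cite{MR0004259}, as the author indicates in the statement.
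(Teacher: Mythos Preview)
The paper does not supply a proof; it simply cites Hua's 1940 paper and uses the result as a black box. Your attempt to sketch a proof via Weyl differencing has a genuine gap. From $|S|^{2^{d-1}} \lesssim N^{2^{d-1}-1+\delta}$ you conclude $|S| \lesssim N^{1-1/d+\delta}$, but taking the $2^{d-1}$-th root gives only $|S| \lesssim N^{1-1/2^{d-1}+\delta}$. These exponents coincide only when $d=2$; for $d\ge 3$ the Weyl saving $2^{-(d-1)}$ is strictly weaker than the $1/d$ required by the lemma, and no refinement of the divisor-counting step repairs this --- the loss is intrinsic to iterated differencing.

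Hua's actual method is multiplicative: the Chinese Remainder Theorem factors the complete sum over $\Z/N\Z$ into complete sums over prime powers $p^{l}$, and for each prime power a $p$-adic reduction (comparing the sum modulo $p^{l}$ to one modulo a smaller power) yields the saving $p^{-1/d}$ per step. This structure is invisible to Weyl differencing. A secondary issue in your sketch is that the condition $\lcm(q_{1},\dots,q_{d})=N$ only gives $\gcd(b_{1},\dots,b_{d},N)=1$, which does \emph{not} force the leading coefficient $b_{d}$ by itself to be coprime to $N$; so even reaching the Weyl exponent would require more care than you indicate. Your closing remark --- that one should just cite Hua --- is exactly what the paper does, and is the correct move here.
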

The major arcs used in the estimate for $\Kf$ are
\begin{equation}
\label{eq:major:poly}
\mathfrak{M}_{N} = \{ \vec\alpha\in\R^{d} : \alpha_{j} = a_{j}/q_{j}+\beta_{j}, (a_{j},q_{j})=1, |\beta_{j}|\leq N^{-j+\nu}, 1\leq j\leq d, \lcm(q_{1},\dots,q_{d})\leq N^{\nu}\},
\end{equation}
where $\nu=1/\max(d,12)$.
On the minor arcs we have the following estimate.
\begin{lemma}[{\cite[Chapter IV, Theorem 3 (p.\ 72)]{MR0409380}}]
\label{lem:5.6}
There exists $\delta=\delta(d)>0$ such that for any $N$ and any $\vec\alpha\not\in \mathfrak{M}_{N}$ we have
\[
|\Kf(\vec\alpha)| \lesssim N^{-\delta}.
\]
\end{lemma}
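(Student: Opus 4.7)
The plan is to combine Dirichlet's simultaneous approximation theorem with the Weyl--Vinogradov method for polynomial exponential sums, arguing by contradiction. First I would apply Dirichlet's theorem separately in each coordinate at scale $N^{j-\nu}$: for every $j \in \{1,\dots,d\}$ there exists a reduced fraction $a_{j}/q_{j}$ with $1 \le q_{j} \le N^{j-\nu}$ and $|\alpha_{j} - a_{j}/q_{j}| \le q_{j}^{-1} N^{-j+\nu} \le N^{-j+\nu}$, which already matches the shape of the major arc definition \eqref{eq:major:poly} coordinatewise.

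Second, I would examine $Q = \lcm(q_{1},\dots,q_{d})$. If $Q \le N^{\nu}$, the approximations just produced exhibit $\vec\alpha$ as a member of $\mathfrak{M}_{N}$, contradicting the hypothesis. Hence $Q > N^{\nu}$, so at least one denominator $q_{j_{0}}$ exceeds $N^{\nu/d}$; combined with $q_{j_{0}} \le N^{j_{0}-\nu}$ this places $q_{j_{0}}$ strictly in the range where Weyl-type inequalities produce a nontrivial polynomial gain.

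Third, I would invoke the Weyl--Vinogradov estimate for the exponential sum $\sum_{n \le N} e(\alpha_{1} n + \dots + \alpha_{d} n^{d})$ under the assumption that one of the approximating denominators is in the ``minor arc'' range. Concretely, iterated Weyl differencing with respect to the variable $n$ reduces the problem to estimating a linear exponential sum whose coefficient is essentially a multiple of $\alpha_{j_{0}}$ at scale $N^{j_{0}-1}$; the rational approximation to $\alpha_{j_{0}}$ produced in Step 1 then gives cancellation by the usual geometric-sum bound. The output is an estimate $|\sum_{n \le N} e(P(n))| \lesssim N^{1-\delta}$ for some $\delta = \delta(d) > 0$, and dividing by $N$ yields the claim. (An alternative route, which is in fact the one taken in \cite{MR0409380}, replaces Weyl differencing by Vinogradov's mean value theorem, producing stronger savings at the cost of substantially more technical overhead.)

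\textbf{Main obstacle.} The third step is the substantive one. A direct Weyl differencing argument loses $2^{1-d}$ in the exponent, which is already enough for the qualitative conclusion but forces $\nu$ in \eqref{eq:major:poly} to be chosen small — precisely the reason behind the choice $\nu = 1/\max(d,12)$. The careful bookkeeping needed to convert the case $j_{0} < d$ (where the large denominator is not in the leading coefficient) into a usable bound on the full sum, together with the interaction between the error $|\beta_{j_{0}}|$ and the partial sum lengths produced by differencing, is where almost all the effort lies. Since only the existence of \emph{some} $\delta(d) > 0$ is asserted, the weaker Weyl approach suffices and the exact value of $\delta$ is immaterial for the applications in \textsection\ref{sec:Z}.
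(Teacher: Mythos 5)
The paper does not contain a proof of this lemma: it is cited verbatim from Vinogradov's 1971 book \cite{MR0409380}, and the only in-paper commentary is the parenthetical remark that Vinogradov states it for $d\geq 12$, which implies the cases $d<12$ by padding the polynomial with zero coefficients. So there is no in-paper argument to compare against; you are reconstructing the cited result.

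Your first two steps are sound: Dirichlet's pigeonhole in each coordinate at scale $N^{j-\nu}$, and the observation that $\vec\alpha\notin\mathfrak{M}_{N}$ forces $Q=\lcm(q_{1},\dots,q_{d})>N^{\nu}$, hence $q_{j_{0}}>N^{\nu/d}$ for some $j_{0}$. The gap is in step 3, and it is substantive, not merely a matter of bookkeeping. Iterated Weyl differencing of $P(n)=\alpha_{1}n+\cdots+\alpha_{d}n^{d}$ produces, after $d-1$ passes, a polynomial that is linear in $n$ with coefficient $d!\,\alpha_{d}\,h_{1}\cdots h_{d-1}$; at every intermediate stage the coefficient isolated by the shift variables is a multiple of the \emph{leading} coefficient $\alpha_{d}$, never of an arbitrary $\alpha_{j_{0}}$. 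So if the only large denominator sits at some $j_{0}<d$ while $\alpha_{d}$ happens to have a small denominator and tiny $\beta_{d}$, a single differencing pass yields no gain. The mechanism you describe — ``iterated Weyl differencing reduces the problem to estimating a linear exponential sum whose coefficient is essentially a multiple of $\alpha_{j_{0}}$'' — does not exist. The actual Weyl-based argument runs by contrapositive and by descent in the degree: assume $|\Kf(\vec\alpha)|\geq N^{-\sigma}$; apply Weyl's inequality to conclude $\alpha_{d}$ has a rational approximation $a_{d}/q_{d}$ with $q_{d}\lesssim N^{C\sigma}$; restrict $n$ to a residue class mod $q_{d}$ so that the degree-$d$ term becomes a slowly varying factor that can be stripped off; and then reapply the argument to the resulting degree-$(d-1)$ exponential sum, iterating down to $j_{0}$. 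Each stage needs the approximations from the previous stages as input, which is exactly the dependency your sketch short-circuits. Your alternative via Vinogradov's mean value theorem is indeed what the cited source does, but as a fallback it is mentioned rather than executed, so as written the proposal does not close.
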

In Vinogradov's 1971 book (which is almost, but not quite, entirely unlike the homonymous 1947 publication and its 1954 English translation) this result is stated for $d\geq 12$, which clearly implies the cases $d<12$.

\subsection{Approximation of trigonometric sums}
\begin{lemma}[{\cite[Lemma 5.12]{MR1019960}}]
\label{lem:5.12}
Let $\vec\alpha \in \mathfrak{M}_{N}$.
Then
\[
\Kf(\vec\alpha) = \Kf[q](\vec\theta) \cmf[N](\vec\beta) + O(N^{-\delta})
\quad\text{ for some } \delta>0,
\]
where $q = \lcm(q_{1},\dots,q_{d})$ and $\vec\beta$ is as in \eqref{eq:major:poly}.
\end{lemma}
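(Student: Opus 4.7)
The plan is to execute the standard circle-method factorization: split the summation index $n$ into arithmetic progressions modulo $q$, pull out the ``rational'' phase, and approximate the remaining smooth exponential sum by a Riemann integral via partial summation. All error terms will be controlled using the major-arc bound $|\beta_{j}|\leq N^{-j+\nu}$, which forces the polynomial phase $P_{\vec\beta}(s):=\sum_{j=1}^{d} \beta_{j} s^{j}$ to be essentially constant on intervals of length $\lesssim N^{1-\nu}$.

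First, I would write each $n\in\{1,\dots,N\}$ uniquely as $n=mq+r$ with $r\in\{1,\dots,q\}$ and $m\geq 0$. Since $q_{j}\mid q$ for each $j$, we have $n^{j}\equiv r^{j}\pmod{q_{j}}$, so $e(n^{j} a_{j}/q_{j})=e(r^{j} a_{j}/q_{j})$. Writing $P_{\vec\theta}(s)=\sum_{j} \theta_{j} s^{j}$, this yields the clean factorization
\[
\Kf(\vec\alpha) = \frac{1}{N}\sum_{r=1}^{q} e(P_{\vec\theta}(r)) \sum_{\substack{m\geq 0\\ mq+r\leq N}} e(P_{\vec\beta}(mq+r)).
\]

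Second, I would approximate the inner sum over $m$ by $\frac{1}{q}\int_{0}^{N} e(P_{\vec\beta}(s))\dif s$. By the major-arc bound \eqref{eq:major:poly},
\[
|P_{\vec\beta}'(s)| \leq \sum_{j=1}^{d} j s^{j-1} |\beta_{j}| \lesssim \sum_{j=1}^{d} N^{j-1} N^{-j+\nu} \lesssim N^{-1+\nu}
\quad\text{for } 0\leq s\leq N.
\]
Applying the elementary trapezoidal estimate
\(
\big|\sum_{m=0}^{M} f(m) - \int_{0}^{M} f\dif x\big| \lesssim \|f\|_{\infty} + \int_{0}^{M} |f'|\dif x
\)
with $f(x)=e(P_{\vec\beta}(xq+r))$ and $M=\lfloor (N-r)/q\rfloor$ gives an error bounded by $1+\int_{0}^{N} q|P_{\vec\beta}'(s)| q^{-1}\dif s \lesssim 1+N^{\nu}$. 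A change of variables $s=xq+r$ followed by absorbing the boundary contributions at $[0,r]$ and $[Mq+r,N]$ (each of length $\leq q\leq N^{\nu}$) into the same $O(1+N^{\nu})$ error replaces $\int_{0}^{M} e(P_{\vec\beta}(xq+r))\dif x$ by $\frac{1}{q}\int_{0}^{N} e(P_{\vec\beta}(s))\dif s$.

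Finally, substituting back and summing over $r\in\{1,\dots,q\}$ gives
\[
\Kf(\vec\alpha) = \underbrace{\frac{1}{q}\sum_{r=1}^{q} e(P_{\vec\theta}(r))}_{=\Kf[q](\vec\theta)} \cdot \underbrace{\frac{1}{N}\int_{0}^{N} e(P_{\vec\beta}(s))\dif s}_{=\cmf[N](\vec\beta)} + O\!\left(\frac{q(1+N^{\nu})}{N}\right),
\]
and since $q\leq N^{\nu}$ with $\nu\leq 1/12$, the error is $O(N^{2\nu-1})=O(N^{-\delta})$ for any $\delta<1-2\nu$. The only subtle step is the second one, where I need to be careful that the $q$-dependence in both the Jacobian and the boundary terms cancels correctly so that the final error remains polynomially small after summation over the $q$ residues; this is the main bookkeeping obstacle, but it is dictated by the smallness of $q$ and the derivative bound on $P_{\vec\beta}$.
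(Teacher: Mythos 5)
Your proof is correct. Your organization differs slightly from the paper's, but the two arguments are essentially equivalent. The paper sets $n=qs+r$ and then replaces the full polynomial phase $\beta_j(qs+r)^j$ by $\beta_j q^j s^j$ (using the smallness of $r\leq q\leq N^\nu$ and $|\beta_j|\leq N^{-j+\nu}$), which turns $\Kf(\vec\alpha)$ into the product $\Kf[q](\vec\theta)\,\Kf[\lfloor N/q\rfloor](\beta_1 q^1,\dots,\beta_d q^d)$ plus an $O(N^{-\delta})$ error, and only then recognizes the second factor as a scale-$q$ Riemann sum for $\cmf[N](\vec\beta)$. You instead pull out the $q$-periodic rational phase $e(P_{\vec\theta}(r))$ exactly (using $q_j\mid q\Rightarrow n^j\equiv r^j\pmod{q_j}$) and then go directly from the inner arithmetic-progression sum in $P_{\vec\beta}$ to the integral $\frac1q\int_0^N e(P_{\vec\beta})$ via a single Euler--Maclaurin estimate, so only one approximation step is needed. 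Both routes hinge on the same inputs: $q\leq N^\nu$ and $|P_{\vec\beta}'(s)|\lesssim N^{-1+\nu}$ on $[0,N]$, yielding an error $O(N^{2\nu-1})$. Your version is marginally cleaner since the first step is exact; the paper's version has the small advantage of producing the intermediate discrete object $\Kf[\lfloor N/q\rfloor]$, but that is not needed later. No gaps.
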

\begin{proof}
For $n=qs+r$, $1\leq s\leq \lfloor N/q \rfloor$, $1\leq r\leq q$, it follows from the assumptions that
\[
\alpha_{j}n^{j} = \theta_{j}r^{j} + \beta_{j}s^{j}q^{j} + O(N^{-\delta}).
\]
Hence the average in the definition of $\Kf(\vec\alpha)$ can be approximated by a product of averages over $s$ and $r$, namely
\[
\Kf(\vec\alpha) = \Kf[q](\vec\theta) \Kf[\lfloor N/q \rfloor](\beta_{1}q^{1},\dots,\beta_{d}q^{d}) + O(N^{-\delta}).
\]
Now, $\Kf[\lfloor N/q \rfloor](\beta_{1}q^{1},\dots,\beta_{d}q^{d})$ is a Riemann sum for $\cmf[N](\vec\beta)$ at scale $q$.
The assumptions on $|\beta_{j}|$ imply that the derivative of the integrand in $\cmf[N](\vec\beta)$ is $O(N^{-\delta})$, and the boundary effects are of the same order, so the error incurred by passing from the sum to the integral is $O(N^{-\delta})$.
\end{proof}

\begin{lemma}[{\cite[Lemma 5.17]{MR1019960}}]
For every $1\leq j\leq d$ we have
\begin{equation}
\label{eq:V-oscillatory}
|\cmf[N](\vec\beta)| \lesssim_{j,d} |\beta_{j}|^{-1/d}N^{-j/d}.
\end{equation}
\end{lemma}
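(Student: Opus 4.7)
The plan is to reduce \eqref{eq:V-oscillatory} to a classical van der Corput estimate for polynomial phases. Rescaling the integration variable by $s = Nu$ turns the statement into the dimensionless inequality
\[
\left| \int_0^1 e(P(u))\, \dif u \right| \lesssim_{d} |c_j|^{-1/d}, \qquad P(u) = \sum_{k=1}^d c_k u^k, \quad c_k := \beta_k N^k,
\]
since $|\beta_j|^{-1/d} N^{-j/d} = |c_j|^{-1/d}$. As the left hand side is trivially bounded by $1$, we may restrict attention to the regime $|c_j| \geq 1$.

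The case $j = d$ is immediate: $P^{(d)} = d!\,c_d$ is constant, so the standard van der Corput lemma of order $d$ directly yields the bound $\lesssim_d |c_d|^{-1/d}$. For $1 \leq j < d$, one exploits that $P^{(j)}$ is a polynomial of degree $d - j$ whose constant term is $j!\,c_j$. I would partition $[0,1]$ into the at most $d - j + 1$ intervals of monotonicity of $P^{(j)}$, then decompose each such subinterval into the ``good'' set $\{|P^{(j)}| \geq |c_j|\}$ and the ``bad'' sublevel set $B = \{|P^{(j)}| < |c_j|\}$. On the good set, van der Corput of order $j$ (with monotonicity supplying the $j = 1$ case) gives the estimate $\lesssim_d |c_j|^{-1/j}$, which is itself dominated by $|c_j|^{-1/d}$ in the regime $|c_j| \geq 1$; on $B$ one hopes to use a trivial measure bound together with a standard sublevel estimate for polynomials.

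The main obstacle is the control of $|B|$: when a higher coefficient $|c_k|$ with $k > j$ dominates, $P^{(j)}$ can remain small on a sizeable subinterval and the naïve measure bound fails. In that regime, however, applying van der Corput using $c_k$ already yields a stronger bound $\lesssim_d |c_k|^{-1/d}$, and one can check that this is itself smaller than $|c_j|^{-1/d}$. Balancing these cases---most cleanly via induction on the degree $d$, which packages the above as the known polynomial van der Corput inequality $\bigl|\int_I e(P)\,\dif u\bigr| \lesssim_d |I|\,(|c_k|\,|I|^k)^{-1/d}$ valid for every $1 \leq k \leq d$---delivers the required estimate after undoing the rescaling $s = Nu$.
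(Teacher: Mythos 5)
Your backbone matches the paper's: rescale, apply van der Corput at the index whose coefficient dominates, and reduce the remaining cases to that one; you also correctly flag the key obstruction, namely that a naive sublevel/measure bound on the bad set cannot close when a higher coefficient dominates. But two things need fixing. First, the good/bad decomposition is an unnecessary detour: the paper's sharper observation is that when no $|\beta_m| N^m$ with $m>j$ exceeds a small multiple of $|\beta_j| N^j$, one has $|\phi^{(j)}(x)| \gtrsim |\beta_j|$ for \emph{every} $x\in[0,N]$ (the higher-order terms of $\phi^{(j)}$ are then dominated by the constant term $j!\beta_j$), so there is no bad set at all and van der Corput of order $j$ applies to the whole integral. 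Second, your bookkeeping via ``induction on the degree $d$'' is not right --- $d$ stays fixed throughout --- and the ``known polynomial van der Corput inequality'' you invoke at the end is, after your rescaling, precisely the statement being proved, so it cannot be taken as given.

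What the paper actually does is a \emph{descending induction on the index $j$}: the base case $j=d$ is your direct computation; for $j<d$, either $|\beta_m|N^m \gtrsim |\beta_j|N^j$ for some $m>j$, in which case $|\beta_m|^{-1/d}N^{-m/d} \lesssim |\beta_j|^{-1/d}N^{-j/d}$ and the induction hypothesis at $m$ already gives the claim, or else all such terms are small (with a small enough implied constant), $\phi^{(j)}$ is bounded below on $[0,N]$ as above, and van der Corput of order $j$ gives $N^{-1}|\beta_j|^{-1/j}$, which is $\leq |\beta_j|^{-1/d}N^{-j/d}$ once one reduces WLOG (by the trivial bound $1$) to $|\beta_j|\geq N^{-j}$. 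Your instinct about where the difficulty lies was correct; the clean resolution is to make the good set the whole interval rather than to estimate a bad set.
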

\begin{proof}
Recall
\[
\cmf[N](\vec\beta) = \frac1N \int_{s=0}^{N} e(\phi(x)) \dif x,
\]
where $\phi(x) = \beta_{1}x^{1} + \dots + \beta_{d}x^{d}$.
We will show \eqref{eq:V-oscillatory} by descending induction on $j$.
Suppose that \eqref{eq:V-oscillatory} is known for all $j>k$, we have to show \eqref{eq:V-oscillatory} with $j=k$.
If $|\beta_{j}|^{-1/d}N^{-j/d} \lesssim |\beta_{k}|^{-1/d}N^{-k/d}$ for some $j>k$, then this follows from the induction hypothesis.
Otherwise we have $|\beta_{j}|N^{j} \lesssim |\beta_{k}|N^{k}$, so that
\[
|\phi^{(k)}(x)|
=
\big| k! \beta_{k} + \sum_{j=k+1}^{d} \frac{j!}{(j-k)!} \beta_{j} x^{j-k} \big|
>
|k! \beta_{k}| - \sum_{j=k+1}^{d} \frac{j!}{(j-k)!} |\beta_{j}| N^{j-k}
\gtrsim
|\beta_{k}|
\]
for all $x\in [0,N]$ provided that the implied constant in the $\lesssim$ notation was chosen sufficiently small.
The van der Corput estimate \cite[\textsection VIII.1.2, Proposition 2]{MR1232192} then implies the desired conclusion (note that the van der Corput estimate is applicable also for $k=1$ since $\phi''$ changes sign at most $d$ times).
\end{proof}

\begin{proposition}[{cf.\ \cite[Lemma 6.14]{MR1019960}}]
\label{prop:6.14}
Let $\Ls$ be given by \eqref{eq:LN} with
\begin{equation}
\label{eq:S-poly}
S(\vec\theta) = \Kf[q](\vec\theta),
\quad
\text{ where } q \text{ is the least common denominator of } \vec\theta.
\end{equation}
and let $\Ks$ be given by \eqref{eq:KN:poly}.
Then
\[
\|\Kf - \Lf\|_{\infty} \lesssim N^{-\delta}
\quad\text{ for some } \delta>0.
\]
\end{proposition}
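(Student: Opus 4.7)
The plan is to follow Bourgain's dichotomy between the major arcs $\mathfrak{M}_{N}$ defined in \eqref{eq:major:poly} and their complement, combining the assumed estimates for $\Kf$ with matching estimates for $\Lf$. First I would write
\[
\|\Kf - \Lf\|_{\infty}
=
\max\bigl( \sup_{\vec\alpha\in\mathfrak{M}_{N}} |\Kf(\vec\alpha) - \Lf(\vec\alpha)|, \ \sup_{\vec\alpha\not\in\mathfrak{M}_{N}} |\Kf(\vec\alpha) - \Lf(\vec\alpha)| \bigr)
\]
and handle each supremum separately.

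On the \emph{major arcs}, fix $\vec\alpha\in\mathfrak{M}_{N}$ with associated $\vec\theta=(a_{1}/q_{1},\dots,a_{d}/q_{d})$ and $\vec\beta$ as in \eqref{eq:major:poly}, and let $s_{0}$ be such that $\vec\theta\in\rats[s_{0}]$, so $2^{s_{0}}\leq q<2^{s_{0}+1}\leq 2 N^{\nu}$. Lemma~\ref{lem:5.12} gives
\[
\Kf(\vec\alpha) = \Kf[q](\vec\theta)\cmf[N](\vec\beta)+O(N^{-\delta})
= S(\vec\theta)\cmf[N](\vec\beta)+O(N^{-\delta}),
\]
using the definition \eqref{eq:S-poly}. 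I then match this with the single term $S(\vec\theta)\cmf[N](\vec\beta)\cutofff[10^{s_{0}}](\vec\beta)$ in the sum defining $\Lf[s_{0},N](\vec\alpha)$. Since $|\beta_{j}|\leq N^{-1+\nu}$ and $10^{s_{0}}\lesssim q^{\log_{2}10}\lesssim N^{\nu\log_{2}10}$, the argument of $\cutofff$ lies well inside $[-1/100,1/100]^{d}$ for $\nu$ small (recall $\nu=1/\max(d,12)$), so $\cutofff[10^{s_{0}}](\vec\beta)=1$. The remaining terms of $\Lf$ must be shown to vanish at $\vec\alpha$: for any other $\vec\theta'\in\rats[s]$ (with any $s$, possibly $s=s_{0}$), the separation of rationals gives $|\vec\theta-\vec\theta'|_{\infty}\gtrsim 2^{-s-s_{0}-2}$ when the vectors differ, whereas $\supp\cutofff[10^{s}](\cdot-\vec\theta')$ has radius $1/(50\cdot 10^{s})$, and the perturbation $|\vec\beta|_{\infty}\leq N^{-1+\nu}$ is negligible on the relevant range of $s$; a short computation shows $|\vec\alpha-\vec\theta'|_{\infty}>1/(50\cdot 10^{s})$, killing that term.

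On the \emph{minor arcs}, Lemma~\ref{lem:5.6} gives $|\Kf(\vec\alpha)|\lesssim N^{-\delta}$ directly, and it remains to bound $|\Lf(\vec\alpha)|$ by $N^{-\delta}$. For each $s$, the support argument above shows that at most one $\vec\theta'\in\rats$ can contribute to $\Lf[s,N](\vec\alpha)$, so
\[
|\Lf(\vec\alpha)| \leq \sum_{s\geq 0} |S(\vec\theta'_{s})| \, |\cmf[N](\vec\alpha-\vec\theta'_{s})|
\]
whenever such a $\vec\theta'_{s}$ exists. I would split this sum at the threshold $2^{s}\approx N^{\nu}$. For $2^{s+1}>N^{\nu}$, Hua's lemma (Lemma~\ref{lem:hua}) yields $|S(\vec\theta'_{s})|\lesssim 2^{-s/d+s\delta}$, and the geometric tail is $\lesssim N^{-\nu/d+\nu\delta}$. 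For $2^{s+1}\leq N^{\nu}$, the candidate $\vec\theta'_{s}$ has lcm denominator $\leq N^{\nu}$, and since $\vec\alpha\not\in\mathfrak{M}_{N}$ with respect to this $\vec\theta'_{s}$, some coordinate $j$ must satisfy $|\beta_{j}|>N^{-j+\nu}$; then the van der Corput bound \eqref{eq:V-oscillatory} gives $|\cmf[N](\vec\beta)|\lesssim |\beta_{j}|^{-1/d}N^{-j/d}\lesssim N^{-\nu/d}$, and summing the $O(\log N)$ contributing scales $s$ still yields $N^{-\delta}$.

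The main technical obstacle is the second half of the major arc analysis: carefully verifying that for every $(s,\vec\theta')\neq (s_{0},\vec\theta)$ the corresponding summand in $\Lf(\vec\alpha)$ vanishes, which requires tracking the three competing length scales $4^{-s-1}$ (minimal spacing of $\rats$), $10^{-s}$ (support radius of $\cutofff[10^{s}]$), and $N^{-1+\nu}$ (size of $\vec\beta$). The remaining steps, once the bookkeeping is in place, reduce to routine application of the cited lemmas.
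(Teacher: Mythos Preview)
Your minor arc treatment matches the paper's and is fine. The gap is in the major arc analysis: the claim that \emph{every} term with $(s,\vec\theta')\neq(s_{0},\vec\theta)$ vanishes is false. The separation bound $|\vec\theta-\vec\theta'|_{\infty}\gtrsim 2^{-s-s_{0}-2}$ is correct, but it does not dominate the cutoff radius $1/(50\cdot 10^{s})$ for all $s$. The needed inequality $2^{-s-s_{0}-2}>1/(50\cdot 10^{s})$ is equivalent to $5^{s}>2^{s_{0}}/12.5$, which fails whenever $s$ is small compared to $s_{0}$ (roughly $s\lesssim 0.43\,s_{0}$). Since $s_{0}$ can be as large as about $\nu\log_{2}N$, this leaves an unbounded range of scales $s$ for which the corresponding summand of $\Lf$ need not vanish at $\vec\alpha$. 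A concrete instance: if $\vec\theta=(1/q,0,\dots,0)$ with $q\approx N^{\nu}$, then $|\vec\alpha-\vec 0|_{\infty}\approx N^{-\nu}\ll 1/50$, so the $s=0$ term survives.

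The fix is exactly the mechanism you already use on the minor arcs. On the major arcs the paper introduces the same threshold $s_{1}$ with $2^{s_{1}}\approx N^{\nu}$ and treats the three pieces separately: the distinguished term at $(s_{0},\vec\theta)$ equals $S(\vec\theta)\cmf[N](\vec\beta)$ (here your argument that $\cutofff[10^{s_{0}}](\vec\beta)=1$ is fine); the tail $s>s_{1}$ is disposed of by Hua's bound $|S(\vec\theta')|\lesssim 2^{-s/d+\delta s}$; and for $0\leq s\leq s_{1}$ with $\vec\theta'\neq\vec\theta$ one does \emph{not} argue vanishing but instead observes that some coordinate $j$ has $|\alpha_{j}-\theta'_{j}|\geq |\theta_{j}-\theta'_{j}|-|\beta_{j}|\gtrsim 2^{-s_{0}-s_{1}}-N^{-1+\nu}\gtrsim N^{-2\nu}$, and then \eqref{eq:V-oscillatory} gives $|\cmf[N](\vec\alpha-\vec\theta')|\lesssim N^{-(1-2\nu)/d}$, which after summing $O(\log N)$ scales is $\lesssim N^{-\delta}$. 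So the ``main technical obstacle'' you flag is resolved by the van der Corput decay of $\cmf[N]$, not by support considerations for $\cutofff$.
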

\begin{proof}
We may assume $N>10000$, say.
Suppose first $\vec\alpha \in \mathfrak{M}_{N}$.
Let $\theta_{j}=a_{j}/q_{j}$ and $\beta_{j}=\alpha_{j}-\theta_{j}$ be as in \eqref{eq:major:poly}.
By Lemma \ref{lem:5.12} we have
\[
\Kf(\vec\alpha) = S(\vec\theta)\cmf[N](\vec\beta) + O(N^{-\delta}).
\]
Let $s_{0}$ be such that $\vec\theta \in \rats[s_{0}]$ and let $s_{1}$ be the integer such that $N^{\nu} \leq 2^{s_{1}} < 2N^{\nu}$.
Then
\[
|S(\vec\theta)\cmf[N](\vec\beta) - \sum_{s\geq 0} \Lf[s,N](\vec\alpha)|
\lesssim
|1-\cutofff[10^{s_{0}}](\vec\beta)| + \sum_{0 \leq s \leq s_{1}} \sup_{\vec\theta'\in \rats, \vec\theta'\neq\vec\theta} |\cmf[N](\vec\alpha-\vec\theta')| + \sum_{s>s_{1}} \sup_{\vec\theta'\in \rats} |S(\vec\theta')|.
\]
Since $|\beta_{j}| \leq N^{-j+\nu} \leq N^{-4\nu}/100 \leq 10^{-s_{0}}/100$, the first term vanishes.
We estimate the third term using Lemma~\ref{lem:hua} and the second term using \eqref{eq:V-oscillatory} and the observation that for $\vec\theta'\in \rats$, $s\leq s_{1}$, we have
\[
|\alpha_{j}-\theta'_{j}|
\geq |\theta'_{j}-\theta_{j}| - |\alpha_{j}-\theta_{j}|
\gtrsim 2^{-s_{1}-s_{0}} - N^{-j+\nu}
\gtrsim N^{-2\nu}
\]
for any $j$ with $\theta'_{j} \neq \theta_{j}$.
This yields the estimate
\[
|\Kf(\vec\alpha) - \sum_{s\geq 0} \Lf[s,N](\vec\alpha)|
\lesssim
N^{-\delta} + \sum_{0 \leq s \leq s_{1}} (N^{-2\nu} \cdot N)^{-1/d} + \sum_{s>s_{1}} (2^{s})^{-\delta}
\lesssim
N^{-\delta}
\]
as required.

If $\vec\alpha \not\in \mathfrak{M}_{N}$, then we have $|\Kf(\vec\alpha)| \lesssim N^{-\delta}$ by Lemma~\ref{lem:5.6}.
Let $s_{1}$ be the integer such that $N^{\nu}/2 \leq 2^{s_{1}} < N^{\nu}$.
We have
\[
|\sum_{s\geq 0} \Lf[s,N](\vec\alpha)|
\lesssim
\sum_{0 \leq s < s_{1}} \sup_{\vec\theta\in \rats} |\cmf[N](\vec\alpha-\vec\theta)| + \sum_{s\geq s_{1}} \sup_{\vec\theta\in \rats} |S(\vec\theta)|.
\]
The second summand is $\lesssim N^{-\delta}$ as before.
In the first summand note that $|\alpha_{j}-\theta_{j}| > N^{-j+\nu}$ for some $j$ since otherwise we would have $\vec\alpha \in \mathfrak{M}_{N}$.
In view of \eqref{eq:V-oscillatory} this implies
\[
|\cmf[N](\vec\alpha-\vec\theta)| \lesssim N^{-\nu/d},
\]
and, since the first summand above consists of approximately $\log N$ terms, we are done.
\end{proof}

\begin{proof}[Proof of Theorem~\ref{thm:var-poly} for $\big| \frac1p - \frac12 \big| < \frac{1}{4d^{2}}$]
It suffices to verify the conditions of Corollary~\ref{cor:full-var} with $Z=Z_{\epsilon}$ as in \eqref{eq:Z_epsilon} for a sufficiently small $\epsilon$.
Verification of \eqref{eq:K-l1} is similar to, but easier than, the proof of Theorem~\ref{thm:var-primes}, and the condition \eqref{eq:KN-l1} is trivially satisfied.

By Lemma~\ref{lem:hua} we have
\[
\Smax \lesssim_{\delta} 2^{s(-1/d+\delta)}
\quad\text{for any } \delta>0,
\]
and since $|\rats|\leq 4^{(s+1)d}$ this implies \eqref{eq:S-lp} provided $\big| \frac1p - \frac12 \big| < \frac{1}{4d^{2}}$.
Finally, condition \eqref{eq:K-L} is given by Proposition~\ref{prop:6.14}.
\end{proof}

\subsection{Another multi-frequency estimate for $1<p<\infty$}
The difficulty in the case of polynomials stems from the fact that the estimates for the exponential sums $S(\vec a/q)$ are much worse than in the case of the primes.
We have the following version of Lemma~\ref{lem:var-all-q}
\begin{lemma}
\label{lem:var-all-q:poly}
For $q\in\N$ and $Q\in\R_{>0}$ let
\[
\Lf[q,Q,t,\mathrm{poly}] (\vec\alpha)
:=
\sum_{a_{1},\dots,a_{d}=1}^{q} S(a/q) \cmf(\vec\alpha-\frac aq) \cutofff[Q](\vec\alpha-\frac aq),
\]
where the coefficients $S$ are given by \eqref{eq:S-poly}.
Suppose $q\leq 5Q$, $r>2$, $1<p<\infty$.
Then
\[
\| \| \Ls[q,Q,t,\mathrm{poly}] * f \|_{\iV^{r}_{t\in\R}} \|_{\ell^{p}(\Z^{d})} \lesssim_{p,r} \| f \|_{\ell^{p}(\Z^{d})}.
\]
\end{lemma}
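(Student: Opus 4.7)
The plan is to factor the Fourier symbol as $\Lf[q,Q,t,\mathrm{poly}](\vec\alpha) = \mathcal{S}(\vec\alpha)\cdot\Lf[q,Q,t](\vec\alpha)$, where $\Lf[q,Q,t]$ is the scalar multiplier of Lemma~\ref{lem:var-all-q} and $\mathcal{S}$ is a $t$-independent symbol that absorbs the coefficients $S(\vec a/q)$. Fix a Schwartz function $\phi:\R^{d}\to\R$ with $\hat\phi\equiv 1$ on $\supp\cutofff$ and $\supp\hat\phi\subset[-1/20,1/20]^{d}$, and set
\[
\mathcal{S}(\vec\alpha):=\sum_{\vec a\in\{1,\dots,q\}^{d}} S(\vec a/q)\,\hat\phi(Q(\vec\alpha-\vec a/q))
\]
on the torus. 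Under the hypothesis $q\leq 5Q$ the bumps $\hat\phi(Q(\cdot-\vec a/q))$ are pairwise disjoint, and since $\hat\phi(Q\cdot)\equiv 1$ on the support of $\cutofff[Q]$, the factorization follows by expanding the product and observing that only the diagonal terms $\vec a=\vec a'$ survive.

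The main task is to bound $\|\Finv\mathcal{S}\|_{\ell^{1}(\Z^{d})}$ uniformly in $q,Q$. Since $\hat\phi(Q\cdot)$ is supported in $(-1/2,1/2)^{d}$ (using $Q\geq q/5\geq 1/5$), the inverse Fourier transform on the torus coincides with the restriction to $\Z^{d}$ of the $\R^{d}$ inverse Fourier transform, giving $\Finv(\mathcal{S})(x) = \tilde S(x)\cdot Q^{-d}\phi(x/Q)$, where $\tilde S(x):=\sum_{\vec a} S(\vec a/q)\,e(\vec a\cdot x/q)$. A direct DFT computation using \eqref{eq:S-poly} identifies $\tilde S$ (up to a sign in $x$) as $q^{d}\Ks[q]^{(q)}$, where $\Ks[q]^{(q)}(x):=\sum_{y\equiv x\bmod q}\Ks[q](y)$ is the projection of the probability kernel $\Ks[q]$ onto $(\Z/q\Z)^{d}$; in particular $\tilde S$ is non-negative, $q$-periodic, and $\sum_{r\in\{1,\dots,q\}^{d}}\tilde S(r)=q^{d}$. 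Splitting the $\ell^{1}$ sum by residues $x=qm+r$, the inner lattice sum $\sum_{m\in\Z^{d}}|\phi((qm+r)/Q)|$ of the Schwartz function $\phi$ at spacing $q/Q\leq 5$ is bounded by $C\max(1,(Q/q)^{d})$, so $\sum_{m}Q^{-d}|\phi((qm+r)/Q)|\lesssim q^{-d}$, giving
\[
\|\Finv\mathcal{S}\|_{\ell^{1}(\Z^{d})}=\sum_{r}\tilde S(r)\sum_{m}Q^{-d}|\phi((qm+r)/Q)|\lesssim q^{d}\cdot q^{-d}=1.
\]

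The conclusion follows by chaining with Lemma~\ref{lem:var-all-q}: writing $\Ls[q,Q,t,\mathrm{poly}]=\Finv(\mathcal{S})*\Ls[q,Q,t]$, the triangle inequality for the variation seminorm together with Young's convolution inequality yield
\[
\| \| \Ls[q,Q,t,\mathrm{poly}]*f \|_{\iV^{r}_{t\in\R}} \|_{\ell^{p}(\Z^{d})}\leq\|\Finv\mathcal{S}\|_{\ell^{1}}\,\| \| \Ls[q,Q,t]*f \|_{\iV^{r}_{t\in\R}} \|_{\ell^{p}(\Z^{d})}\lesssim_{p,r}\|f\|_{\ell^{p}(\Z^{d})}.
\]
The main obstacle is the uniform $\ell^{1}$ bound on $\Finv\mathcal{S}$: although $\tilde S$ can reach $q^{d}$ pointwise, it is concentrated on the image of $(n,n^{2},\dots,n^{d})\bmod q$, and this concentration, together with the scale $Q\gtrsim q$ of $\phi(\cdot/Q)$, supplies via a Riemann-sum estimate exactly the cancellation needed.
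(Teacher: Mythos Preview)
Your argument is correct and follows essentially the same route as the paper: factor $\Lf[q,Q,t,\mathrm{poly}]=\mathcal{S}\cdot\Lf[q,Q,t]$ with a $t$-independent coefficient symbol, identify $\Finv\mathcal{S}$ as a Schwartz bump times $q^{d-1}$ supported on the residues of the curve $(n,n^{2},\dots,n^{d})\bmod q$, and bound its $\ell^{1}$ norm uniformly to reduce to Lemma~\ref{lem:var-all-q}. The only differences from the paper are cosmetic---the paper reuses $\cutofff[Q/4]$ in place of your auxiliary $\hat\phi(Q\cdot)$ and writes out the indicator $1_{x_{j}\equiv x_{1}^{j}\bmod q}$ explicitly rather than via the projected kernel $\Ks[q]^{(q)}$.
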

\begin{proof}
With the notation from Lemma~\ref{lem:var-all-q} we have
\[
\Lf[q,Q,t,\mathrm{poly}] (\vec\alpha)
=
\Lf[q,Q,t] (\vec\alpha) \big( \sum_{a_{1},\dots,a_{d}=1}^{q} S(a/q) \cutofff[Q/4](\vec\alpha-\frac aq) \big).
\]
In view of that lemma it suffices to show that the inverse Fourier transform of the function in the brackets is uniformly bounded in $\ell^{1}(\Z^{d})$.
The value of that inverse Fourier transform at $x\in\Z^{d}$ equals
\begin{align*}
&\int_{(\R/\Z)^{d}} \sum_{a_{1},\dots,a_{d}=1}^{q} S(a/q) \cutofff[Q/4](\vec\alpha-\frac aq) e(\alpha_{1}x_{1}+\dots+\alpha_{1}x_{d}) \dif\alpha_{1}\dots\dif\alpha_{d}\\
&=
\sum_{a_{1},\dots,a_{d}=1}^{q} S(a/q) \int_{(\R/\Z)^{d}} \cutofff[Q/4](\vec\alpha) e(\alpha_{1}x_{1}+\dots+\alpha_{1}x_{d}) e(\frac{a_{1}}{q}x_{1}+\dots+\frac{a_{d}}{q}x_{d}) \dif\alpha_{1}\dots\dif\alpha_{d}\\
&=
\cutoff[Q/4](x) \sum_{a_{1},\dots,a_{d}=1}^{q} S(a/q) e(\frac{a_{1}}{q}x_{1}+\dots+\frac{a_{d}}{q}x_{d})\\
&=
\cutoff[Q/4](x) \sum_{a_{1},\dots,a_{d}=1}^{q} \frac1q \sum_{n=1}^{q} e(-\frac{a_{1}}{q}n^{1}-\dots-\frac{a_{d}}{q}n^{d}) e(\frac{a_{1}}{q}x_{1}+\dots+\frac{a_{d}}{q}x_{d})\\
&=
\cutoff[Q/4](x) \frac1q \sum_{n=1}^{q} \prod_{j=1}^{d} \sum_{a_{j}=1}^{q} e(\frac{a_{j}}{q}(x_{j}-n^{j}))\\
&=
\cutoff[Q/4](x) q^{d-1} \sum_{n=1}^{q} \prod_{j=1}^{d} 1_{x_{j} \equiv n^{j} \mod q}\\
&=
\cutoff[Q/4](x) q^{d-1} 1_{x_{j} \equiv x_{1}^{j} \mod q, j\geq 2}.
\end{align*}
This is uniformly bounded in $\ell^{1}$ for $q\lesssim Q$.
To see this note that the characteristic function selects exactly $q$ points from every cube with side length $q$, so we have
\begin{align*}
\| \cutoff[Q/4](x) q^{d-1} 1_{x_{j} \equiv x_{1}^{j} \mod q, j\geq 2} \|_{\ell^{1}}
&\leq
q^{d} \sum_{y\in\Z^{d}} \sup_{x\in qy + [1,q]^{d}} \eta_{Q/4}(x)\\
&\lesssim
q^{d} Q^{-d} \sum_{y\in\Z^{d}} \sup_{x\in qy + [1,q]^{d}} \eta(4x/Q)\\
&\lesssim
q^{d} Q^{-d} \sum_{y\in\Z^{d}} \sup_{x\in qy + [1,q]^{d}} (1+|x|/Q)^{-d-1}\\
&\lesssim
q^{d} Q^{-d} \sum_{y\in\Z^{d}, |y|<CQ/q} 1
+
q^{d} Q^{-d} \sum_{y\in\Z^{d}, |y|\geq CQ/q} (q|y|/Q)^{-d-1}\\
&\lesssim
1.
\qedhere
\end{align*}
\end{proof}

\begin{proof}[Proof of Theorem~\ref{thm:var-poly} for $\big| \frac1p - \frac12 \big| < \frac{1}{2(d+1)}$]
Repeating the argument leading to Corollary~\ref{cor:var-rats} we obtain
\[
\| \| \Ls[s,t] * f \|_{\iV^{r}_{t>0}} \|_{\ell^{p}(\Z^{d})} \lesssim_{p,r,\epsilon} 2^{s(1+\epsilon)} \| f \|_{\ell^{p}(\Z^{d})}
\]
for $1<p<\infty$.
Recall that in view of Proposition~\ref{prop:multiple-freq-lp-circle} we can replace $(1+\epsilon)$ by $(-1/d+\epsilon)$ for $p=2$.
Interpolation gives
\[
\| \| \Ls[s,t] * f \|_{\iV^{r}_{t>0}} \|_{\ell^{p}(\Z^{d})} \lesssim_{p,r,\epsilon} 2^{s(-(1-|1-2/p|)/d+|1-2/p|+\epsilon)}  \| f \|_{\ell^{p}(\Z^{d})},
\]
and this is summable provided
\[
|1/2-1/p| < 1/2(d+1).
\]
The remaining ingredients of the proof apply unchanged.
\end{proof}

\printbibliography
\end{document}
